\theoremstyle{plain}
\newtheorem{lem}{Lemma}[section]
\newtheorem{cor}[lem]{Corollary}
\newtheorem{prop}[lem]{Proposition}
\newtheorem{thm}[lem]{Theorem}
\newtheorem{conj}[lem]{Conjecture}
\newtheorem*{mthm*}{Main Theorem}
\theoremstyle{definition}
\newtheorem{ex}[lem]{Example}
\newtheorem{para}[lem]{}
\newtheorem*{convention*}{Convention}
\newcommand{\id}{\operatorname{id}}
\newcommand{\lotimes}{\otimes^{\mathbf{L}}}
\newcommand{\HH}{\operatorname{H}}
\newcommand{\coker}{\operatorname{Coker}}
\newcommand{\AR}{\operatorname{\texttt{(AR1)}}}
\newcommand{\Ar}{\operatorname{\texttt{(AR2)}}}
\newcommand{\im}{\operatorname{Im}}
\newcommand{\shift}{\mathsf{\Sigma}}
\newcommand{\End}{\operatorname{End}}
\newcommand{\Ker}{\operatorname{Ker}}
\newcommand{\p}{\ideal{p}}
\newcommand{\xra}{\xrightarrow}
\renewcommand{\geq}{\geqslant}
\renewcommand{\leq}{\leqslant}
\renewcommand{\ker}{\Ker}
\newcommand{\Ext}[4][R]{\operatorname{Ext}_{#1}^{#2}(#3,#4)}
\newcommand{\Hom}{\operatorname{Hom}}
\def\Ext{\operatorname{Ext}}
\def\p{\mathfrak{p}}
\def\D{\operatorname{\mathsf{D}}}
\def\D{\mathcal{D}}
\def\w{\text{\textsf{w}}}
\def\C{\mathcal{C}}
\def\K{\mathcal{K}}
\def\End{\mathrm{End}}
\numberwithin{equation}{lem}
\begin{document}

\bibliographystyle{amsplain}

\title[Diagonal tensor algebra and na\"{\i}ve liftings]{Diagonal tensor algebra and na\"{\i}ve liftings}

\author{Saeed Nasseh}
\address{Department of Mathematical Sciences\\
Georgia Southern University\\
Statesboro, GA 30460, U.S.A.}
\email{snasseh@georgiasouthern.edu}

\author{Maiko Ono}
\address{Institute for the Advancement of Higher Education, Okayama University of Science, Ridaicho, Kitaku, Okayama 700-0005, Japan}
\email{ono@ous.ac.jp}

\author{Yuji Yoshino}
\address{Graduate School of Environmental, Life, Natural Science and Technology, Okayama University, Okayama 700-8530, Japan}
\email{yoshino@math.okayama-u.ac.jp}

\thanks{Y. Yoshino was supported by JSPS Kakenhi Grant 19K03448.}


\keywords{Auslander-Reiten Conjecture, DG algebra, DG module, diagonal ideal, enveloping DG algebra, lifting, na\"{\i}ve lifting, semifree resolution, tensor algebra, tensor graded, weak lifting.}
\subjclass[2020]{13A02, 13D07, 13D09, 16E30, 16E45.}

\begin{abstract}
The notion of na\"{\i}ve lifting of DG modules was introduced by the authors in~\cite{NOY, NOY1} for the purpose of studying problems in homological commutative algebra that involve self-vanishing of Ext. Our goal in this paper is to deeply study the na\"{\i}ve lifting property using the tensor algebra of the shift of the diagonal ideal (or, diagonal tensor algebra, as is phrased in the title of this paper). Our main result provides several characterizations of na\"{\i}ve liftability of DG modules under certain Ext vanishing conditions. As an application, we affirmatively answer \cite[Question 4.10]{NOY2} under the same assumptions.
\end{abstract}

\maketitle

\tableofcontents

\section{Introduction}\label{sec20200314a}

Lifting theory of differential graded (DG) modules along certain DG algebra homomorphisms was initially studied by Nasseh and Sather-Wagstaff~\cite{nasseh:lql} in order to gain a better understanding on a question of Vasconcelos; see~\cite{Vasc}. Since then, this theory has been significantly developed by the authors in~\cite{NOY, NOY1, NOY3, nassehyoshino, OY} for the purpose of studying the following long-standing conjecture in homological commutative algebra, known as the \emph{Auslander-Reiten Conjecture}.

\begin{conj}[\cite{AR}]\label{conj20230122a}
A finitely generated module $M$ over a commutative noetherian local ring $R$ is free if $\Ext^{i}_R(M,M\oplus R)=0$ for all $i\geq 1$.
\end{conj}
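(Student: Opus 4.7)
The plan is to attack the conjecture through the theory of na\"ive liftings developed in the paper, using DG algebra techniques. First, I would pass to the completion of $R$ and invoke Cohen's structure theorem to write $\widehat{R} \cong Q/(\mathbf{f})$ with $Q$ a complete regular local ring and $\mathbf{f} = f_1, \dots, f_c$ a regular sequence, using faithful flatness of completion to reduce to the case where $R$ is complete. The Koszul complex $A = K^Q(\mathbf{f})$ then furnishes a DG algebra together with a quasi-isomorphism $A \xrightarrow{\simeq} R$, and a semifree resolution allows $M$ to be viewed as a DG $A$-module.

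The main step is to exploit the hypothesis $\operatorname{Ext}^i_R(M,M) = 0$ for all $i \geq 1$ to conclude that $M$ admits a na\"ive lifting along the augmentation $A \to R$; by the forthcoming characterization theorem of this paper, na\"ive liftability is controlled precisely by Ext vanishing conditions formulated over the diagonal tensor algebra of the shift of the diagonal ideal of the enveloping DG algebra of $A$. Once a na\"ive lift of $M$ to a DG module over a DG subalgebra closer to $Q$ has been produced, the additional hypothesis $\operatorname{Ext}^i_R(M,R) = 0$ combined with the regularity of $Q$ forces $M$ to have finite projective dimension as a $Q$-module. An application of the Auslander-Buchsbaum formula over $Q$ then yields that $M$ is free as a $Q$-module, and this descends to freeness of $M$ as an $R$-module.

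The principal obstacle is precisely the bridge between the purely homological hypothesis $\operatorname{Ext}^i_R(M, M \oplus R) = 0$ and the DG-theoretic requirement of na\"ive liftability. The main theorem of this paper delivers such a bridge only \emph{under certain Ext vanishing conditions}, which are in general stronger than those appearing in Conjecture~\ref{conj20230122a}. Closing this gap---that is, showing that na\"ive liftability is implied by the self-vanishing of Ext alone---is the hardest piece of the argument and is the reason the conjecture remains open even within the framework developed here. The strategy pursued in the paper is therefore to isolate and verify na\"ive liftability in the strongest form compatible with the diagonal tensor algebra machinery, in the hope that subsequent refinements of the Ext vanishing criteria will eventually close the remaining distance to the conjecture in full.
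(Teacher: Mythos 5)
This statement is the Auslander--Reiten Conjecture, and the paper does not prove it: the paper explicitly says that the conjecture remains open in general, and the body of the paper supplies characterizations of na\"ive liftability plus the implication (Theorem~\ref{thm20230428a}) that na\"ive liftability along the Tate extension~\eqref{eq20230424c} would yield the conjecture. You have recognized this correctly, and your final paragraph honestly identifies where the gap lies. So there is no complete proof to match, and in broad strokes your sketch of the intended strategy (pass to the completion, replace $R$ by a quasiisomorphic DG algebra, reduce to a na\"ive lifting problem over a regular base $Q$, and conclude freeness from finite projective dimension over $Q$) does track the paper's plan from~\ref{para20230424s} and~\ref{para20230521a}.

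However, there is a concrete error in your setup that silently restricts you to the complete intersection case. Cohen's structure theorem gives $\widehat{R}\cong Q/I$ with $Q$ regular and $I$ an ideal, but $I$ is \emph{not} in general generated by a regular sequence, and the Koszul complex $K^Q(\mathbf{f})$ is quasiisomorphic to $R$ only when $\mathbf{f}$ is a regular sequence, i.e.\ only when $R$ is a complete intersection. For general $R$ one must use Tate's acyclic closure $Q'=Q\langle X_n\mid n\leq\infty\rangle$, which is typically an \emph{infinite} free extension of DG algebras. This distinction is precisely the crux of why the conjecture remains open via this route: Theorem~\ref{thm20230127f} establishes na\"ive liftability for finite free extensions under Ext vanishing, which already recovers Corollary~\ref{cor20230425w} for complete intersections, but extending this to the infinite extension~\eqref{eq20230424c} is the open problem. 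Your Koszul-complex formulation therefore describes the already-known complete intersection case rather than the general problem the paper is working towards, and the ``gap'' you correctly name at the end is actually wider than your setup suggests. Also note that the Ext-vanishing hypotheses of the conjecture do translate into the conditions $\AR$ and $\Ar$ of the paper (see~\ref{para20230521a} and Propositions~\ref{para20230520a}--\ref{para20230520b'}), so the obstruction is not a mismatch of hypotheses but the passage from the characterization Theorem~\ref{equivalent conditions} to an actual proof that $\omega_N=0$ along an infinite free extension.
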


While this conjecture is still open in general, it has been proven affirmatively for several classes of commutative noetherian local rings; see, for instance, \cite{AY, auslander:lawlom, avramov:svcci, avramov:edcrcvct, avramov:phcnr, huneke:vtci, huneke:voeatoscmlr, jorgensen:fpdve, nasseh:vetfp, nasseh:lrqdmi, nasseh:oeire, MR1974627, sega:stfcar}, to name a few.
The following discussion briefly explains a relation between this conjecture and lifting theory of DG modules.

\begin{para}\label{para20230424s}
In Conjecture~\ref{conj20230122a}, the ring $R$ can be assumed to be complete. Let $R\cong Q/I$ be a minimal Cohen presentation, i.e., $Q$ is a regular local ring and $I$ is an ideal of $Q$. Tate~\cite{Tate} showed that there is a (finite or infinite) free extension of DG algebras
\begin{equation}\label{eq20230424c}
Q\hookrightarrow Q'=Q\langle X_n\mid n\leq \infty\rangle
\end{equation}
such that $Q'$ is quasiisomorphic to $R$. Hence, there is an equivalence $\D(Q')\simeq \D(R)$ between the derived categories under which Conjecture~\ref{conj20230122a} can be translated to an equivalent problem for a DG module $N$ over the DG algebra $Q'$ with corresponding Ext vanishing assumptions; see~\ref{para20230521a} for details. Since $Q$ is a regular ring, developing a \emph{suitable} notion of lifting along the DG algebra extension~\eqref{eq20230424c} that lifts $N$ to a complex over $Q$ will result in an affirmative answer to Conjecture~\ref{conj20230122a}. 
\end{para}

The classical notions of lifting for DG modules studied in~\cite{NOY, nasseh:lql, nassehyoshino, OY}, namely, \emph{lifting} and \emph{weak lifting}, are generalizations of those for modules and complexes studied by Auslander, Ding, Solberg~\cite{auslander:lawlom} and Yoshino~\cite{yoshino}. As we mention in the introduction of~\cite{NOY3}, these versions of lifting are not suitable in the sense of~\ref{para20230424s}, as they do not allow working with free extensions of DG algebras that are obtained by adjoining more than one variable, e.g., \eqref{eq20230424c}. To overcome this issue, the authors introduced the notion of \emph{na\"{\i}ve lifting} for DG modules in~\cite{NOY, NOY1} and proved the following result; see~\ref{defn20230127a} for the definition of na\"{\i}ve lifting.

\begin{thm}[\protect{\cite[Main Theorem]{NOY1}}]\label{thm20230127f}
Let $Q$ be a commutative ring (not necessarily regular). Assume that $Q'=Q \langle X_n\mid n<\infty \rangle$ is obtained by adjoining finitely many variables of positive degrees to $Q$.
If $N$ is a bounded below semifree DG $Q'$-module with $\Ext _{Q'} ^i (N, N)=0$ for all $i\geq 1$, then  $N$ is na\"ively liftable to $Q$.
\end{thm}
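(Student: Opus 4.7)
The plan is to prove the theorem using the diagonal tensor algebra machinery that is the central tool of this paper. Let $\mu : Q' \otimes_Q Q' \to Q'$ denote the multiplication map, let $J = \ker \mu$ be the diagonal ideal, and let $T = T_{Q'}(\Sigma J)$ denote the diagonal tensor algebra. The characterization of na\"{\i}ve liftability developed in the earlier sections reformulates the statement ``$N$ is na\"{\i}vely liftable to $Q$'' as the existence of a DG module structure on $N$ over $T$ extending the given $Q'$-action through the augmentation $T \twoheadrightarrow Q'$. So the problem reduces to constructing such an extended action.

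First I would exploit that $Q'$ is obtained from $Q$ by adjoining only finitely many variables $X_1,\ldots,X_n$ of positive degree. In this situation the diagonal ideal $J$ is generated as a DG $Q'$-bimodule by the finitely many elements $X_i \otimes 1 - 1 \otimes X_i$, and it admits an explicit Koszul-type semifree resolution built from these generators. This gives tight control on $\Sigma J$, on its iterated tensor powers, and on maps out of them into $N$, and it also lets the self-Ext vanishing of $N$ over $Q'$ be converted into vanishing of the relevant groups that will appear as obstructions.

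Next I would build the $T$-module structure on $N$ inductively in the ``tensor degree''. Such a structure amounts to a compatible system of DG $Q'$-bilinear maps
\[
   \alpha_k : (\Sigma J)^{\otimes_{Q'} k} \otimes_{Q'} N \longrightarrow N, \qquad k \geq 0,
\]
with $\alpha_0 = \mathrm{id}_N$, satisfying the usual associativity and differential-compatibility conditions. Given $\alpha_0,\ldots,\alpha_{k-1}$, the obstruction to producing $\alpha_k$ lives in a group of the form $\Ext^{\geq 1}_{Q'}(N, N)$, which vanishes by hypothesis. The bounded-below semifreeness of $N$ ensures that these obstructions are computed in the honest derived category and that, whenever an obstruction vanishes, the lift can be realized by a genuine DG map and not merely a homotopy class.

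The main obstacle will be the bookkeeping at each inductive stage: verifying that the obstruction to constructing $\alpha_k$ genuinely lies in $\Ext^{\geq 1}_{Q'}(N, N)$ rather than in some auxiliary group where vanishing is not assumed, and checking that the collection $\{\alpha_k\}_{k\geq 0}$ assembles into a strict, associative DG $T$-module structure, not one existing only up to homotopy. Reversing the equivalence to produce an honest na\"{\i}ve lift from this $T$-structure is the last step, and the finite number of adjoined variables is again essential here to guarantee that the construction terminates in finitely many steps and actually delivers a na\"{\i}ve lift of $N$ down to $Q$.
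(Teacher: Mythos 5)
The paper does not itself prove this theorem; it is quoted from \cite[Main Theorem]{NOY1}, a result that predates the diagonal tensor algebra machinery developed in the present paper. Attempting to derive it from that machinery is therefore not the route the paper takes, and more importantly your proposal contains a genuine gap that would prevent it from succeeding.

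The central problem is your reformulation of na\"{\i}ve liftability. You claim that $N$ is na\"{\i}vely liftable to $Q$ if and only if $N$ admits a DG $T$-module structure extending the $Q'$-action through the augmentation $T \twoheadrightarrow Q'$. But every DG $Q'$-module already carries exactly such a structure automatically: the paper records in \ref{para20230114a} that $T/T^+ \cong B$, so every DG $B$-module is naturally a DG $T$-module via the augmentation. Your condition is therefore vacuous, and the inductive construction of the maps $\alpha_k$ buys nothing. The actual characterization proved in this paper (Theorem~\ref{equivalent conditions}) is that $\omega_N = 0$ as an element of the tensor graded endomorphism ring $\Gamma_{N\otimes_{Q'} T}$, and that equivalence is established only under hypothesis $\AR$, which in particular requires $N$ and $B=Q'$ to be \emph{perfect} as DG $Q$-modules. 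Theorem~\ref{thm20230127f} grants only that $N$ is bounded-below semifree over $Q'$ with $Q$ an arbitrary commutative ring, so $\AR$ is not available and the paper's machinery does not even apply directly.

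There is a further difficulty even where $\AR$ and $\Ar$ do hold: passing from the $\Ext$-vanishing hypothesis to $\omega_N = 0$ is precisely the content of Conjecture~\ref{conj20230520a}, which the paper leaves open. The bounded-below assumption on $N$ yields only local nilpotence of $\w_N$ (Proposition~\ref{lem20230115b}), and the paper explicitly warns that this does not imply nilpotence; the graded pieces $\Gamma^k_{N\otimes_{Q'} T}$ in which the obstructions actually live are $\Hom_{\K(Q')}(N, N\otimes_{Q'} (\shift J)^{\otimes_{Q'} k})$, not $\Ext^{\geq 1}_{Q'}(N,N)$, and nothing forces these to vanish. The actual proof of the stated theorem in \cite{NOY1} is independent of the tensor algebra $T$; it proceeds by induction on the finite number of adjoined variables and uses the earlier lifting techniques from \cite{NOY}. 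That is the route you would need in order to obtain the conclusion under the stated hypotheses.
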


While, by Theorem~\ref{thm20230127f}, na\"{\i}ve lifting property holds along \emph{finite} free extensions of DG algebras under the above Ext vanishing assumptions, it is still an open problem whether it holds along \emph{infinite} free extensions of DG algebras, even under stronger assumptions on the vanishing of Ext.
However, the following statement holds true.

\begin{thm}[\protect{\cite[Theorem 7.1]{NOY1}}]\label{thm20230428a}
If na\"{\i}ve lifting property holds along the free extension of DG algebras~\eqref{eq20230424c}, then Conjecture~\ref{conj20230122a} holds.
\end{thm}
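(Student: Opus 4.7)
The plan is to convert Conjecture~\ref{conj20230122a} into a question about DG modules over a Tate DG algebra via the equivalence described in~\ref{para20230424s}, apply the assumed na\"{\i}ve lifting property to descend the corresponding DG $Q'$-module to a complex over the regular ring $Q$, and then use regularity to force finite projective dimension of $M$ over $R$.

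First, I would reduce to the case where $R$ is complete, using that $\Ext$ commutes with completion for finitely generated modules and that freeness descends from $\widehat R$ to $R$. Fix a minimal Cohen presentation $R\cong Q/I$ with $(Q,\mathfrak{n})$ regular local, together with a Tate resolution $Q\hookrightarrow Q'=Q\langle X_n\mid n\leq \infty\rangle$ as in~\eqref{eq20230424c}. The quasi-isomorphism $Q'\xrightarrow{\simeq}R$ induces an equivalence $\D(Q')\simeq \D(R)$. For a finitely generated $R$-module $M$ satisfying the AR hypothesis $\Ext^i_R(M,M\oplus R)=0$ for all $i\geq 1$, let $N$ be a bounded below semifree DG $Q'$-module representing $M$, so that $\HH(N)\cong M$. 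The derived equivalence translates the vanishing hypotheses into
\[
\Ext^i_{Q'}(N,N)=0=\Ext^i_{Q'}(N,Q')\quad\text{for all }i\geq 1.
\]

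Next, since $N$ satisfies $\Ext^i_{Q'}(N,N)=0$ for all $i\geq 1$, the standing assumption that na\"{\i}ve lifting holds along~\eqref{eq20230424c} produces a na\"{\i}ve lift $\widetilde N$ of $N$ to $Q$. Because $Q$ has trivial DG structure, $\widetilde N$ is simply a bounded below complex of $Q$-modules, which can be taken to consist of finitely generated free $Q$-modules by starting from a minimal semifree model for $N$. The defining property of na\"{\i}ve lifting yields a quasi-isomorphism $Q'\otimes_Q \widetilde N\simeq N$ in $\D(Q')$.

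Finally, because $Q$ is regular local of finite global dimension, any bounded below complex of finitely generated free $Q$-modules is perfect, so $\widetilde N$ is perfect over $Q$. Perfectness is preserved under base change, so $N\simeq Q'\otimes_Q \widetilde N$ is perfect over $Q'$, and transferring through $\D(Q')\simeq \D(R)$ gives $\pd_R M<\infty$. For a finitely generated module of finite projective dimension over a noetherian local ring one has $\pd_R M=\sup\{i\geq 0 : \Ext^i_R(M,R)\neq 0\}$; thus $\Ext^i_R(M,R)=0$ for all $i\geq 1$ forces $\pd_R M=0$, so $M$ is free. The main obstacle is the precise extraction, from na\"{\i}ve liftability of $N$, of a $Q$-complex $\widetilde N$ that is sufficiently finitely generated and genuinely recovers $N$ after base change up to quasi-isomorphism; na\"{\i}ve lifting is strictly weaker than classical lifting, so the diagonal tensor algebra machinery developed in the body of the paper must be used in an essential way to certify this descent. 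Once that is in hand, the remaining combination of regularity of $Q$ with Auslander-Buchsbaum-style vanishing is routine.
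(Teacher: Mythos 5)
Your overall plan—translate the AR hypotheses into Ext-vanishing over the Tate algebra $Q'$, descend to the regular ring $Q$, and exploit finite global dimension—is the right shape, but there is a genuine gap at the central step, and you yourself flag it as the ``main obstacle.'' You assert that na\"{\i}ve liftability of $N$ produces a $Q$-complex $\widetilde N$ together with a quasi-isomorphism $Q'\otimes_Q\widetilde N\simeq N$. That is precisely what na\"{\i}ve lifting does \emph{not} say. By Definition~\ref{defn20230127a}, na\"{\i}ve liftability of $N$ to $Q$ means only that the counit $\pi_N\colon N|_Q\otimes_Q Q'\to N$ admits a right inverse in $\C(Q')$; no new lift $\widetilde N$ is extracted, and the object playing the role of the ``lift'' is forced to be the restriction $N|_Q$ itself, with the conclusion being that $N$ is a \emph{direct summand} of $N|_Q\otimes_Q Q'$ rather than isomorphic to it. Treating this as a quasi-isomorphism quietly upgrades na\"{\i}ve lifting to classical lifting, which, as you note, is strictly stronger and would trivialize the whole enterprise of this paper.

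The argument can be repaired along the lines you sketch, but the repair has to route through the direct-summand statement. Since $N$ is semifree over $Q'$ and $Q'$ is semifree over $Q$, the restriction $N|_Q$ is a bounded-below complex of free $Q$-modules whose homology is $M$ concentrated in degree $0$; by regularity of $Q$ this restriction is perfect over $Q$, so $N|_Q\otimes_Q Q'$ is a perfect DG $Q'$-module. Because $N$ is a direct summand of this in $\C(Q')$, hence in $\D(Q')$, and perfect complexes form a thick (in particular, summand-closed) subcategory of $\D(Q')$, one concludes $N$ is perfect over $Q'$, i.e.\ $\pd_R M<\infty$, and then the Auslander--Buchsbaum step you invoke finishes it. Note, however, that this paper's own framework gives a shorter ending: under the $\AR$ hypotheses, Theorem~\ref{equivalent conditions}(ix) shows na\"{\i}ve liftability is equivalent to $N$ being a direct summand of a finite direct sum of copies of $B=Q'$ in $\D(Q')$, so $M$ is a direct summand of a finite free $R$-module and hence free outright, with no appeal to the finite-projective-dimension-plus-$\Ext$-vanishing argument. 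Your route and the paper's route agree on strategy but diverge at exactly the point you left as a gap; the paper's machinery (the diagonal tensor algebra and the vanishing of $\omega_N$) exists precisely to close that gap.
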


Theorems~\ref{thm20230127f} and~\ref{thm20230428a} provide another proof for the following well-known fact; see the discussion in~\cite[7.4]{NOY1}.

\begin{cor}\label{cor20230425w}
If $R$ is a complete intersection local ring, then Conjecture~\ref{conj20230122a} holds.
\end{cor}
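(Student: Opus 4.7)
The plan is to chain Theorems~\ref{thm20230127f} and~\ref{thm20230428a} in the specific setting afforded by a complete intersection. First I would carry out the standard reductions for Conjecture~\ref{conj20230122a}: since both the hypothesis $\Ext^{i}_R(M,M\oplus R)=0$ and the conclusion ``$M$ is free'' are preserved under $\m$-adic completion, I may assume $R$ is complete. Choose a minimal Cohen presentation $R \cong Q/I$ with $Q$ a regular local ring. The assumption that $R$ is complete intersection means precisely that $I$ is generated by a $Q$-regular sequence $f_1,\dots,f_c$.

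Next I would identify the explicit form of the Tate construction~\eqref{eq20230424c} in this situation. Because $f_1,\dots,f_c$ is a regular sequence, the Koszul DG algebra
\[
Q' \;=\; Q\langle X_1,\dots,X_c \mid \partial X_i = f_i,\ |X_i|=1\rangle
\]
is already a semifree resolution of $R$ over $Q$; no Tate variables of higher homological degree need to be adjoined. In other words, for a complete intersection the extension in~\eqref{eq20230424c} is obtained by adjoining only finitely many variables, all of positive degree. This places the extension $Q\hookrightarrow Q'$ exactly within the hypothesis of Theorem~\ref{thm20230127f}.

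Finally I would invoke the two cited theorems in sequence. By Theorem~\ref{thm20230127f}, every bounded below semifree DG $Q'$-module $N$ with $\Ext^{i}_{Q'}(N,N)=0$ for all $i\geq 1$ is na\"{\i}vely liftable to $Q$; thus the na\"{\i}ve lifting property holds along the specific free extension~\eqref{eq20230424c} attached to our complete intersection $R$. Applying Theorem~\ref{thm20230428a} then yields Conjecture~\ref{conj20230122a} for $R$, as desired. The only genuinely nontrivial input here is the finiteness of the Tate construction, which is the classical fact that the Koszul complex on a regular sequence is acyclic; no other obstacle arises, since the remaining two steps are direct appeals to the theorems that have already been recorded in the excerpt.
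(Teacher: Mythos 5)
Your proof is correct and follows essentially the same route the paper has in mind: the paper states Corollary~\ref{cor20230425w} as an immediate consequence of chaining Theorems~\ref{thm20230127f} and~\ref{thm20230428a} (referring to \cite[7.4]{NOY1} for details), and you have spelled out precisely the one nontrivial ingredient needed to make that chaining legitimate — namely that when $R$ is a complete intersection, the Tate construction~\eqref{eq20230424c} terminates after adjoining the finitely many degree-one Koszul variables, so the finiteness hypothesis of Theorem~\ref{thm20230127f} is met.
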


Theorems~\ref{thm20230127f} and~\ref{thm20230428a} show that na\"{\i}ve lifting is a suitable version of lifting in the sense of~\ref{para20230424s}. Hence, the goal of this paper is to deeply study na\"{\i}ve lifting property of DG modules under certain Ext vanishing assumptions along a homomorphism $A\to B$ of DG algebras, where $B$ is a semifree DG module over $A$, e.g., where $B=A \langle X_i\mid n\in \mathbb{N} \rangle$ is an infinite free extension of DG algebras; see Theorem~\ref{equivalent conditions}. The main tool that we use for our study is the tensor algebra of the shift of the diagonal ideal (or, diagonal tensor algebra, as is phrased in the title of this paper).\vspace{6pt}

The organization of this paper is as follows.\vspace{6pt}

In Section~\ref{sec20221213a}, we specify some of the terminology and provide the background on the diagonal ideal, which we denote by $J$. We also introduce the diagonal tensor algebra, which is denoted by $T$, and explain that in addition to the usual grading as a DG algebra, $T$ has another grading, which we refer to as the tensor grading. 
Then, in Section~\ref{sec20230425d} we introduce the notions of tensor graded DG $T$-modules and tensor graded DG $T$-module homomorphisms. Furthermore, for a semifree DG $B$-module $N$, we construct a map, denoted by $\omega_N$, that is shown in Theorem~\ref{equivalent conditions} to be the obstruction to na\"{\i}ve lifting, and hence, to Conjecture~\ref{conj20230122a}. The map $\omega_N$ is an element of a certain tensor graded endomorphism ring, which we denote by $\Gamma_{N \otimes _B T}$.\vspace{6pt}

In Section~\ref{sec20230425s}, we discuss the action of shifts of $\omega_N$ as an element in the ring $\Gamma_{N \otimes _B T}$ on certain graded right $\Gamma_{N \otimes _B T}$-modules. Our main result in this section is Theorem~\ref{main} whose proof involves several steps, including an analysis of the mapping cone of $\omega_N$.
In Section~\ref{sec20230425r}, we study the relationships among certain endomorphism rings. Our first main result in this section, namely Theorem~\ref{omega generates end}, states that 
$\Gamma _{N \otimes _B T}$ can be described as a polynomial ring in the variable $\omega_N$ over another endomorphism ring under certain assumptions for DG modules. Our second main theorem in this section is Theorem~\ref{kernel of omega} in which we determine the kernel and cokernel of the map that is defined by the left multiplication by $\omega_N$ on $\Gamma _{N \otimes _B T}$. These theorems are crucial in our study and are used in the subsequent section.\vspace{6pt}

Section~\ref{sec20230422a} is where we prove our main result in this paper, namely Theorem~\ref{equivalent conditions}, in which we give several characterizations of na\"{\i}ve lifting property under certain Ext vanishing conditions. The proof of this theorem uses our entire work from the previous sections. As an immediate application of Theorem~\ref{equivalent conditions}, we provide an affirmative answer to~\cite[Question 4.10]{NOY2} under the same assumptions; see Corollary~\ref{cor20230807a}. Finally, we conclude this paper with Appendix~\ref{sec20230521a} in which we briefly discuss some details on the conditions that we expose on the DG modules throughout the paper and their relationships with the assumptions in Conjecture~\ref{conj20230122a}.

\section{Background and setting}\label{sec20221213a}

We assume that the reader is familiar with differential graded (DG) homological algebra; general references on this subject include~\cite{avramov:ifr,avramov:dgha, felix:rht, GL}.
The purpose of this section is to fix our notation, specify some of the terminology, and briefly provide the necessary background on the diagonal ideal and tensor algebra.

\begin{para}
Throughout the paper, $R$ is a commutative ring and $(A,d^A)$, or simply $A$, is a \emph{DG $R$-algebra}. This means that $A = \bigoplus  _{n \geq 0} A _n$ is a non-negatively graded $R$-algebra with a differential $d^A=\{d_n^A\colon A_n\to A_{n-1}\}_{n\geq 0}$ (i.e., each $d_n^A$ is an $R$-linear map with $d_n^Ad_{n+1}^A=0$) such that
for all homogeneous elements $a \in A_{|a|}$ and $b \in A_{|b|}$
\begin{enumerate}[\rm(i)]
\item
$ab = (-1)^{|a| |b|}ba$ and $a^2 =0$  if $|a|$ is odd;
\item
$d^A(ab) = d^A(a) b + (-1)^{|a|}ad^A(b)$, that is, $d^A$ satisfies the \emph{Leibniz rule}.
\end{enumerate}
Here, $|a|$ is called the degree of the homogeneous element $a$.
\end{para}

\begin{para}\label{para20221103a}
DG modules in this paper are \emph{right} DG modules, unless otherwise is specified. By a \emph{DG $A$-module} $(M, \partial^M)$, or simply $M$, we mean a graded right $A$-module $M=\bigoplus_{n\in \mathbb{Z}}M_n$ with a differential $\partial^M=\{\partial^M_n\colon M_n\to M_{n-1}\}_{n\in \mathbb{Z}}$ that satisfies the \emph{Leibniz rule}, i.e., for all homogeneous elements $a\in A_{|a|}$ and $m\in M_{|m|}$ the equality
$\partial^M(ma) = \partial^M(m)\ a + (-1)^{|m|} m\ d^A(a)$ holds. (Left DG $A$-modules are defined similarly.) We often refer to $M=\bigoplus_{i\in \mathbb{Z}}M_i$ as the \emph{underlying graded $A$-module}.


For an integer $i$, the \emph{$i$-th shift} of a DG $A$-module $M$ is denoted by $\shift^i M$. Note that $\left(\shift^i M, \partial^{\shift^i M}\right)$ is a DG $A$-module such that for all integers $j$ we have $\left(\shift^i M\right)_j = M_{j-i}$ and $\partial_j^{\shift^i M}=(-1)^i\partial_{j-i}^M$. We simply write $\shift M$ instead of $\shift^1 M$.
\end{para}

\begin{para}
We denote by $\C(A)$ the category of DG $A$-modules and DG $A$-module homomorphisms. For DG $A$-modules $M,N$, 
the set of DG $A$-module homomorphisms from $M$ to $N$ is denoted by $\Hom_{\C(A)}(M,N)$. Isomorphisms in $\C(A)$ are identified by $\cong$ and quasiisomorphisms in $\C(A)$ (i.e., DG $A$-module homomorphisms that induce bijections on all homology modules) are
identified by $\simeq$.
 
The homotopy category of $A$ is denoted by $\K(A)$, which is a triangulated category. Objects of $\K(A)$ are DG $A$-modules and morphisms are the set of homotopy equivalence classes of DG $A$-module homomorphisms.
In fact, for DG $A$-modules $M,N$ we have $\Hom _{\K(A)} (M, N)= \Hom _{\C(A)} (M, N)/ \sim$. Note that for $f,g\in \Hom _{\C(A)} (M, N)$ we write $f \sim g$ if and only if there exists a graded $A$-module homomorphism $h\colon M \to \shift^{-1}N$ of underlying graded $A$-modules such that the equality $f - g = \partial ^N h + h \partial ^M$ holds. 
\end{para}

\begin{para}
A DG $A$-module $P$ is called \emph{semifree} if it has a \emph{semfree basis}, i.e., if there is a well-ordered subset $F\subseteq P$ which is a basis for the underlying graded $A$-module $P$ such that for every $f\in F$ we have $\partial^P(f)\in \sum_{e<f}eA$. Equivalently, the DG $A$-module $P$ is semifree if there exists an increasing filtration $$0=P_{-1}\subseteq P_0\subseteq P_1\subseteq \cdots\subseteq P$$ of DG $A$-submodules of $P$ such that $P=\bigcup_{i\geq 0}P_i$ and each DG $A$-module $P_i/P_{i-1}$ is a direct sum of copies of $\shift^n A$ with $n\in \mathbb{Z}$; see~\cite{AH},~\cite[A.2]{AINSW}, or~\cite{felix:rht} for details. 

A \emph{semifree resolution} of a DG $A$-module $M$ is a quasiisomorphism $P\xra{\simeq}M$, where $P$ is a semifree DG $A$-module.

For $i\in \mathbb{Z}$ and DG $A$-modules $M,N$, the extension $\Ext^i_A(M,N)$ is defined to be $\HH_{-i}\left(\Hom_A(P,N)\right)$, where $P\xra{\simeq}M$ is a semifree resolution of $M$. Note that $$\Ext^i_A(M,N)=   \Hom _{\K(A)}(P, \shift^iN).$$

\end{para}

\begin{para}
A DG $A$-module $M$ is called {\it perfect} if it is quasiisomorphic to a semifree DG $A$-module that has a finite semifree basis. Equivalently, a DG $A$-module $M$ is perfect if there is a finite sequence of DG $A$-submodules
$$
0=M_0\subseteq M_1\subseteq \cdots \subseteq M_{n-1} \subseteq M_{n} = M
$$
of $M$ such that each $M_k/M_{k-1}$ is a finite direct sum of copies of $\shift^{r_k}A$ with $r_k\geq 0$.
Note that a direct summand of a perfect DG $A$-module is not necessarily perfect.



\end{para}

\begin{para}\label{para20230425r}
Throughout the paper, $(B,d^B)$, or simply $B$, is another DG $R$-algebra that is semifree as a DG $A$-module and $\varphi\colon A\to B$ is a homomorphism of DG $R$-algebras.
Note that $A$ is a DG $R$-subalgebra of $B$, and setting $\overline{B}= B/A$, we see that $\overline{B}$ is a semifree DG $A$-module as well. We further assume that $\overline{B}$ is positively graded, i.e., $B_n=0$ for all $n\leq 0$.

Examples of DG $R$-algebras $A$ and $B$ that satisfy these conditions include the polynomial and free extensions $B=A[X_i\mid 1\leq i\leq n]$ and $B=A\langle X_i\mid 1\leq i\leq n\rangle$ of $A$ with $n\leq \infty$ and $|X_i|>0$ for all $1\leq i\leq n$, where in the latter case $A$ is assumed to be a divided power DG $R$-algebra; see~\cite[Example 2.5]{NOY2}.
\end{para}


\begin{para}
Let $B^e=B \otimes_A B$, or more precisely $(B^e, d^{B^e})$, be the \emph{enveloping DG $R$-algebra of $B$ over $A$}.
The algebra structure on $B^e$ is given by the formula $$(b_1 \otimes_A b_2)( {b'}_1 \otimes_A {b'}_2) =  (-1)^{|{b'}_1| |b_2|} b_1 {b'}_1 \otimes_A b_2{b'}_2$$
for all homogeneous elements $b_1, b_2, b'_1, b'_2\in B$ and the graded structure is given by the equalities $(B^e)_i  = \sum_{j} B_j\otimes_R B_{i-j}$ for all $i\geq 0$ with the relations $b_j\otimes_R a b_{i-j} = b_j a\otimes_R b_{i-j}$ for all $b_j \in B_j$, $b_{i-j}\in B_{i-j}$, and $a\in A$.
Note that $B$ is considered as a DG $R$-subalgebra of $B^e$ via the injective DG $R$-algebra homomorphism $B \to B^e$ defined by $b\mapsto b \otimes_A 1$.

Following~\cite{NOY1}, the kernel of the DG $R$-algebra homomorphism $\pi_B\colon B^e\to B$ defined by $\pi_B(b\otimes_A b')=bb'$ is called the \emph{diagonal ideal} and is denoted by $J$. Hence, we have a short exact sequence
\begin{equation}\label{basic sequence}
0 \to J \xra{\iota} B^e \xra{\pi_B}  B\to 0
\end{equation}
of DG $B^e$-modules in which $\iota$ is the natural injection.
\end{para}

\begin{para}
For a DG $B$-ideal $I$ and an integer $n \geq 0$, let $I^{\otimes _B n}=I \otimes _ B \cdots \otimes _ B I$ be the $n$-fold tensor product of $I$ over $B$ with the convention that $I^{\otimes_B 0}=B$.

Note that $J$ is free as an underlying graded left $B$-module. Hence, applying the functor $- \otimes _ B J^{\otimes _B n}$ to~\eqref{basic sequence}, we obtain the short exact sequence
\begin{equation}\label{eq20230113a}
0 \to J ^{\otimes _B (n+1)}\xra{\iota_n}  B \otimes _A J^{\otimes _B n} \xra{\pi_n} J^{\otimes _B n} \to 0
\end{equation}
of DG $B^e$-modules.
\end{para}

\begin{para}
The $A$-linear map $\delta\colon B\to J$ defined by $\delta(b)=b\otimes_A 1-1\otimes_A b$ for all $b\in B$ is well-defined and is called the \emph{universal derivation}; see~\cite{NOY3}. Under the setting of~\ref{para20230425r} we see that $\inf J:=\inf\{n\mid J_n\neq 0\}>0$; see~\cite[Proposition 2.9]{NOY2}.
\end{para}


The rest of this section is devoted to the tensor algebra and some of its properties.

\begin{para}
Let $T=\bigoplus _{n \geq 0} (\shift J) ^{\otimes_B n}$ be the \emph{tensor algebra of $\shift J$ over $B$} (or the \emph{diagonal tensor algebra}).
The $R$-algebra structure of $T$ comes from taking the tensor product over $B$ as the multiplication. Note that $T$ is also a DG $B^e$-module with the differential $\partial^T$ defined by the Leibniz rule on $(\shift J) ^{\otimes_B n}$ for all $n\geq 1$ as follows:
$$
\partial ^T (c_1 \otimes_B \cdots \otimes_B c_n ) = \sum _{i=1}^n (-1) ^{|c_1| + \cdots + |c_{i-1}|} c_1 \otimes_B \cdots \otimes_B \partial ^{\shift J} (c_i) \otimes_B \cdots \otimes_B c_n
$$
where  $c_i \in \shift J$ for all $1 \leq i \leq n$.

Throughout the paper, we simply write  $T$ instead of $(T, \partial^T)$, which we regard as a DG $R$-algebra (but not necessarily commutative), and at the same time,
as a DG $B^e$-module.
\end{para}


\begin{para}\label{para20230114a}
By definition, a DG $T$-module $M$ is an underlying graded $B$-module $M$ with a DG $B$-module homomorphism $\mu\colon M \otimes_B \shift J \to M$,  which defines the action of $T$ on $M$ as
$m \cdot c := \mu (m \otimes_B c)$, for all $m \in M$ and $c \in \shift J$.
Note that  $$T^+ = T \otimes _B \shift J = \bigoplus _{n\geq 1} (\shift J)^{\otimes _B n}$$  is  a two-sided DG ideal of $T$.
The natural inclusion $T^+\hookrightarrow T$ defines a DG $T$-module structure on $T$.
Note also that $T/T^+ \cong B$ and hence, every DG $B$-module is naturally a DG $T$-module.
\end{para}

\begin{para}\label{para20230114b}
In addition to the usual grading, the DG $R$-algebra $T$ has another grading, which we refer to as the \emph{tensor grading}. More precisely, we say that an element $t\in T$ is of tensor degree $n$ if and only if $t$ belongs to $(\shift J) ^{\otimes _B n}$.
If necessary, we call the ordinary grading that gives the DG $R$-algebra structure on $T$ the \emph{DG grading}.
Hence,  the $n$-th tensor graded part of $T$ is $(\shift J)^{\otimes _B n}$, which we denote by $T^n$, i.e., in the tensor grading we have $T= \bigoplus _{n\geq 0} T^n$.
On the other hand,  the $n$-th DG graded part of $T$ is
$$B_n \oplus (\shift J)_{n} \oplus \left((\shift J)^{\otimes _B 2}\right)_{n} \oplus \cdots=B_n \oplus J_{n-1} \oplus (J^{\otimes _B 2})_{n-2} \oplus \cdots.$$
\end{para}

\section{Tensor graded DG modules}\label{sec20230425d}

The purpose of this section is to introduce a specific map $\omega_N$, where $N$ is a semifree DG $B$-module, which plays a crucial role in this paper; see~\ref{para20230115e} for the definition. We start with generalizing our discussion in~\ref{para20230114b} by defining the notions of tensor graded DG $T$-modules and tensor graded DG $T$-module homomorphisms. 


\begin{para}\label{para20230114c}
A \emph{tensor graded DG $T$-module} $M$ is a direct sum  $M = \bigoplus _{i \in \mathbb{Z}} M^i$ of DG $B$-modules with a collection $\{a_M ^i\colon M^i \otimes _B \shift J \to M^{i+1}\}_{i \in \mathbb{Z}}$ of DG $B$-module homomorphisms.
Notice that this collection $\{ a_M ^i\}_{i\in \mathbb{Z}}$ defines an action of $T$ on $M$ and makes it into a DG $T$-module; see~\ref{para20230114a}.

Let $N = \bigoplus _{i \in \mathbb{Z}} N^i$ be another tensor graded DG $T$-module. A \emph{tensor graded DG $T$-module homomorphism}  $f\colon M \to N$  is a collection $\{ f^i\colon M^i\to N^i\}_{i \in \mathbb{Z}}$ of DG $B$-module homomorphisms which make the following squares commutative in $\C(B)$:
$$
\xymatrix{
M^i \otimes _B \shift J \ar[r]^-{a_M^i}  \ar[d] _-{f^i \otimes_B \id_{\shift J} } & M^{i+1} \ar[d] ^-{f^{i+1}} \\
N^i \otimes _B \shift J \ar[r]^-{a_N^i} &N ^{i+1}.
}
$$
We denote by $\C ^{gr}(T)$  the category of all tensor graded DG $T$-modules and tensor graded DG $T$-module homomorphisms.
\end{para}

\begin{para}
One can check that $\C^{gr}(T)$ is an abelian category. Note that $\C^{gr}(T)$ possesses two shift functors that are of different natures.
For an object $M$ in $\C^{gr}(T)$ and an integer $n$, we have $\shift^n M$, which is the $n$-th shift that is defined in~\ref{para20221103a} using the DG $T$-module structure of $M$.
On the other hand, using the notation from~\ref{para20230114c}, the $n$-th shift of $M$ as a tensor graded DG $T$-module is defined to be $M[n]= \bigoplus_{i\in \mathbb{Z}} M^{i+n}$.
Note that both of the functors $\shift^n$ and $[n]$ are auto-functors on $\C^{gr}(T)$.
\end{para}

\begin{para}
Let  $M$  be in $\C^{gr}(T)$ and use the notation from~\ref{para20230114c}.
For each integer $i$, the $i$-th tensor graded part $M^i$ is a DG $B$-submodule of $M$, and hence, $\partial^M (M^i) \subseteq M^i$.
If $m \in M^i$ and $c \in T^1=\shift J$, then $m\cdot c=a_M^i(m\otimes_Bc) \in M^{i+1}$. It then follows from the Leibniz rule that
$$M^{i+1}\ni\partial^M (m \cdot c) = \partial^M(m) \cdot c \pm m \cdot d^T(c).$$
\end{para}

\begin{ex}
Let $\mathbb{B}= B \otimes _A T$ and consider the DG $B^e$-module $(\mathbb{B}, \mathbb{D})$ constructed in~\cite[\S 3]{NOY2}.
Note that the direct sum description $\mathbb{B} = \bigoplus_{i\in \mathbb{Z}} (B \otimes _A T^i)$ does not give a tensor graded structure on $(\mathbb{B}, \mathbb{D})$  because by the definition of $\mathbb{D}$ we have $\mathbb{D}(B \otimes _A T^i) \subseteq  (B \otimes _A T^i) \cup (B \otimes _A T^{i-1})\not\subseteq  B \otimes _A T^i$. Hence, $(\mathbb{B}, \mathbb{D}) \not\in \C ^{gr}(T)$.
On the other hand, $\partial ^{B \otimes _A T} (B \otimes _A T^i) \subseteq  B \otimes _A T^i$ for all integers $i$ and we see that $(\mathbb{B}, \partial ^{B \otimes _A T})\in \C^{gr}(T)$.
\end{ex}

\begin{ex}
If $M$ is a DG $B$-module, then $M \otimes _B T$ is a tensor graded DG $T$-module by considering $(M\otimes_BT)^i:=M\otimes_BT^i$ with $a^i_{M\otimes_BT}:=\id_{M\otimes_BT^{i+1}}$ for all $i\geq 0$. (Here, we assume that $(M\otimes_BT)^i=0$ for all integers $i<0$.) 
\end{ex}

\begin{para}
The homotopy category $\K^{gr} (T)$ is defined in a natural way. In fact, the objects of $\K^{gr}(T)$ are the same objects as $\C^{gr} (T)$, i.e., all tensor graded DG $T$-modules. Also, assuming $M,N$ are tensor graded DG $T$-modules, we set
$$
\Hom _{\K^{gr} (T)} (M,N)= \Hom _{\C^{gr} (T)} (M,N) /\sim
$$
where  $\sim$ denotes the homotopy, i.e., for $f,g\in \Hom _{\C^{gr} (T)} (M,N)$ we have $f \sim g$ if and only if there exists a tensor graded $T$-linear map $h\colon M \to \shift^{-1} N$ such that $f-g = h \partial^M + \partial ^N h$.
It is straightforward to check that $\K ^{gr}(T)$ is a triangulated category with the shift functor of the DG grading, i.e., with $\shift$.

The derived category $\D^{gr}(T)$ is also defined naturally by taking the localization of  $\K ^{gr}(T)$ by the multiplicative system of all quasiisomorphisms.
\end{para}

\begin{para}
For tensor graded DG $T$-modules $M,N$ let
\begin{gather*}
{}^*\!\Hom _{\K^{gr} (T)}(M,N)= \bigoplus _{n \in \mathbb{Z}} \Hom _{\K^{gr} (T)} (M, N[n])\\
\Gamma _M = {}^*\End _{\K^{gr} (T)}(M)
\end{gather*}
where ${}^*\End _{\K^{gr} (T)}(M)$ denotes ${}^*\!\Hom _{\K^{gr} (T)}(M,M)$. Note that  $\Gamma _M$  is a tensor graded ring in which the multiplication is defined by the composition of morphisms and for an integer $n$, its $n$-th tensor graded part is $\Gamma ^n_M=\Hom _{\K^{gr} (T)}(M,M[n])$.
We call $\Gamma _M = \bigoplus _{n\in \mathbb{Z}} \Gamma _M^n$ the \emph{graded endomorphism ring of $M$ in $\K^{gr} (T)$}.
Note that ${}^*\!\Hom _{\K^{gr} (T)}(M,N)$ is naturally a graded right $\Gamma_M$- and left $\Gamma _N$-module.
\end{para}


The following key lemma will be used frequently in the subsequent sections.

\begin{lem}[Adjointness]\label{lem20230116a}
Assume that $M \in \K(B)$ and $N=\bigoplus_{i\in \mathbb{Z}}N^i \in \K^{gr}(T)$. Then, there exists a natural bijection
$$
\Hom _{\C(B)} (M, N^0) \xra{\zeta} \Hom _{\C^{gr}(T)} (M \otimes _B T, N)
$$
defined by $\zeta(\alpha)(m \otimes_B t)=\alpha (m) t$ for all $\alpha\in \Hom _{\C(B)} (M, N^0)$, $m\in M$, and $t\in T$. Moreover, $\zeta$ induces the bijection
\begin{equation}\label{eq20230604a}
\Hom _{\K(B)} (M, N^0) \xra{\cong} \Hom _{\K^{gr}(T)} (M \otimes _B T, N).
\end{equation}
\end{lem}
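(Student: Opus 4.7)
The plan is to construct an inverse to $\zeta$ and verify compatibility with differentials, tensor grading, and homotopies.

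First I would verify that $\zeta$ is well-defined. Given $\alpha\in\Hom_{\C(B)}(M,N^0)$, the formula $\zeta(\alpha)(m\otimes_B t)=\alpha(m)t$ uses the tensor graded $T$-module structure of $N$: the element $\alpha(m)t$ is computed via the iterated application of the maps $a_N^i$. I would check three things in sequence. (a) The map $\zeta(\alpha)$ is $B$-balanced and $T$-linear: $B$-balancing is immediate from $\alpha$ being $B$-linear, and $T$-linearity follows because the right $T$-action on $M\otimes_B T$ is by concatenation. (b) It respects the tensor grading: $\zeta(\alpha)$ sends $M\otimes_B T^n$ into $N^n$, because $\alpha(m)\in N^0$ and then acting by $t\in T^n$ lands in $N^n$ by definition of the maps $a_N^i$. (c) It commutes with differentials: using the Leibniz rule for $\partial^{M\otimes_B T}$ and the fact that each $a_N^i$ is a chain map, together with $\alpha$ being a chain map, yields $\partial^N\zeta(\alpha)=\zeta(\alpha)\partial^{M\otimes_B T}$.

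Next I would construct the inverse $\eta$. Given $\beta\in\Hom_{\C^{gr}(T)}(M\otimes_B T,N)$, its restriction to the $0$-th tensor graded part is a map $M\otimes_B T^0=M\otimes_B B\cong M\to N^0$, and I would set $\eta(\beta)$ to be this composite. Then $\eta(\beta)$ is automatically a DG $B$-module homomorphism because $\beta$ is one in each tensor graded degree. The computations $\eta(\zeta(\alpha))=\alpha$ and $\zeta(\eta(\beta))=\beta$ are then routine: the first is direct from the formula, and the second holds because any tensor graded DG $T$-module homomorphism out of $M\otimes_B T$ is determined by its restriction to $M\otimes_B B$, via the $T$-linearity identity $\beta(m\otimes_B t)=\beta(m\otimes_B 1)\cdot t$.

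For the induced bijection at the level of $\K$, I would check that $\zeta$ and $\eta$ both preserve the homotopy relation. Given a homotopy $h\colon M\to\shift^{-1}N^0$ witnessing $\alpha\sim\alpha'$ in $\C(B)$, the map $\zeta(h)\colon M\otimes_B T\to\shift^{-1}N$ defined by the same formula $\zeta(h)(m\otimes_B t)=h(m)t$ is a tensor graded $T$-linear map (it lands in $\shift^{-1}N$ with the induced tensor grading), and a direct application of the Leibniz rule shows it witnesses $\zeta(\alpha)\sim\zeta(\alpha')$ in $\C^{gr}(T)$. Conversely, any homotopy $H\colon M\otimes_B T\to\shift^{-1}N$ in $\C^{gr}(T)$ restricts in tensor degree zero to a homotopy $M\to\shift^{-1}N^0$ in $\C(B)$, and this restriction is the inverse operation. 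Hence $\zeta$ descends to the claimed bijection~\eqref{eq20230604a}.

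The only mildly delicate point will be bookkeeping the signs when verifying the Leibniz compatibility in step (c), since the differential on $M\otimes_B T$ involves $\partial^T$ which itself is defined via a signed Leibniz rule on $(\shift J)^{\otimes_B n}$; I expect this to work out cleanly because $\alpha(m)t$ is computed using the same signed rules that define the $T$-action on $N$. No other step presents a genuine obstacle, and naturality of $\zeta$ in both $M$ and $N$ is immediate from the explicit formula.
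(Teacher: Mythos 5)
Your proposal is correct and follows essentially the same route as the paper's proof: the inverse is restriction to the $0$-th tensor graded part, and the induced bijection on homotopy categories comes from noting that both $\zeta$ and its inverse preserve homotopies. The paper leaves the verifications to the reader, whereas you spell out well-definedness, the inverse identities, and the homotopy compatibility in detail, but there is no difference in strategy.
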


\begin{proof}
The inverse map $\zeta^{-1}$ is defined by $\zeta^{-1}(f)=f^0$, where $f^0\colon M \to N^0$ is the restriction of $f\in \Hom _{\C^{gr}(T)} (M \otimes _B T, N)$ to the $0$-th tensor graded part. It is straightforward to see that $\zeta$ and $\zeta^{-1}$ both preserve a homotopy equivalence. Therefore, the isomorphism~\eqref{eq20230604a} follows.
\end{proof}


\begin{para}\label{para20230115d}
Applying the shift functor $\shift^n$ on the short exact sequence~\eqref{eq20230113a} and taking the direct sum $\bigoplus_{n\geq 0}$, since $(T^+)^n=(\shift J)^{\otimes_Bn}$, we obtain a short exact sequence
\begin{equation}\label{eq20230115a}
0 \to \shift^{-1}(T^+ [1])  \to B \otimes _A T \to T \to 0.
\end{equation}
of DG $T$-modules in $\C^{gr}(T)$. 
\end{para}

The following proposition is an immediate consequence of our discussion in~\ref{para20230115d}.

\begin{prop}\label{triangles}
Let $N$  be a semifree DG  $B$-module. Then the following hold.
\begin{enumerate}[\rm(a)]
\item\label{triangles1}
There is a short exact sequence
\begin{equation}\label{eq20130118a}
0 \to N \otimes _B \shift^{-1}(T^+ [1]) \to N \otimes _A T \xra{\theta_N} N \otimes _B T \to 0
\end{equation}
in $\C^{gr}(T)$, where  $\theta_N$  is the induced natural homomorphism.
\item\label{triangles2}
The following triangle exists in $\K^{gr}(T)$:
$$
N \otimes _B \shift^{-1}(T^+ [1])  \to N \otimes _A T \xra{\theta_N} N \otimes _B T \xra{\omega^+_N} N \otimes _B T^+[1].
$$
\item\label{triangles3}
The following triangle exists in $\D^{gr}(T)$:
$$
N \lotimes _B \shift^{-1}(T^+ [1]) \to N \lotimes _A T \xra{\theta_N}  N \lotimes _B T \xra{\omega^+_N} N \lotimes _B T^+[1].
$$
\end{enumerate}
\end{prop}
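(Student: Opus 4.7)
The plan is to prove parts (a), (b), and (c) by applying $N \otimes_B -$ to the short exact sequence~\eqref{eq20230115a} from~\ref{para20230115d} and then passing successively to the homotopy and derived categories.

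For part (a), I observe that~\eqref{eq20230115a} is a short exact sequence in $\C^{gr}(T)$, and since $N$ is semifree (hence flat) as a right DG $B$-module, the functor $N \otimes_B -$ preserves exactness. Applying this functor yields a short exact sequence in $\C^{gr}(T)$. The natural associativity isomorphism $N \otimes_B (B \otimes_A T) \cong N \otimes_A T$ identifies the middle term, and $\theta_N$ is the map induced by the multiplication map $B \otimes_A T \to T$. All maps respect the tensor grading by construction.

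For part (b), I exhibit a tensor-graded right $T$-linear section of $\theta_N$, which is enough to conclude that the short exact sequence from part (a) induces a distinguished triangle in $\K^{gr}(T)$. Choose a semifree basis $\{e_\lambda\}_\lambda$ of $N$ over $B$; then $N \otimes_B T$ is free as an underlying graded right $T$-module on $\{e_\lambda \otimes_B 1\}_\lambda$, and the right $T$-linear map $\sigma$ determined by $\sigma(e_\lambda \otimes_B 1) = e_\lambda \otimes_A 1$ is a tensor-graded section of $\theta_N$. The map $\sigma$ need not commute with the differentials, since the differential of $N$ involves right $B$-multiplications that do not descend through $\otimes_A$; it is precisely this failure that produces the (possibly non-trivial) connecting morphism $\omega^+_N$, completing the triangle.

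For part (c), since $N$ is semifree over $B$ and, by the setting of~\ref{para20230425r}, $B$ is semifree over $A$, it follows that $N$ is semifree over $A$. Therefore $N \otimes_A -$ and $N \otimes_B -$ compute the corresponding derived tensor products $N \lotimes_A -$ and $N \lotimes_B -$, so the triangle from part (b) transports verbatim to $\D^{gr}(T)$ with derived tensor products replacing ordinary ones. The main technical point is the construction of the tensor-graded section $\sigma$ in part (b), which relies essentially on the semifreeness of $N$ over $B$; parts (a) and (c) then follow from standard flatness and base-change considerations.
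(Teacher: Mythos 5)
Your proposal is correct and follows essentially the same route the paper intends: the paper states that Proposition~\ref{triangles} is ``an immediate consequence'' of the short exact sequence~\eqref{eq20230115a} constructed in~\ref{para20230115d}, and you have simply filled in the standard details --- flatness of the semifree module $N$ for part (a), a tensor-graded $T$-linear degreewise splitting $\sigma$ built from a semifree basis to pass to a triangle in $\K^{gr}(T)$ for part (b), and semifreeness of $N$ over both $B$ and $A$ (the latter via the standing hypothesis that $B$ is semifree over $A$) to identify $\otimes$ with $\lotimes$ for part (c). No gap; the argument matches the paper's intent.
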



In the next result, we explicitly describe the connecting morphism $\omega^+_N$.

\begin{lem}\label{lem20231015a}
Let $N$ be a semifree DG $B$-module with a semifree basis $\{ e_{\lambda}\}_{\lambda \in \Lambda}$, and let $\partial ^N (e_{\lambda}) = \sum _{\mu < \lambda} e_{\mu} b_{\mu \lambda}$,
where $b _{\mu \lambda} \in B$.
Then, the tensor graded DG $T$-module homomorphism $\w_N^+\colon N\otimes_BT\to N\otimes_BT^+[1]$ in $\C^{gr}(T)$ defined by
$$
\w ^+_N (e_{\lambda} \otimes _B t  ) = \sum _{\mu < \lambda} e_{\mu} \otimes _B \delta (b_{\mu \lambda} ) \otimes _B t
$$
for all $t\in T$ satisfies the equality $\omega_N^+=[\w_N^+]$.
\end{lem}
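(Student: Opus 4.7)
By Proposition~\ref{triangles}\eqref{triangles2}, the morphism $\omega_N^+$ is the connecting morphism in $\K^{gr}(T)$ of the short exact sequence~\eqref{eq20130118a}. My plan is to compute it by the standard recipe: choose a splitting $s\colon N\otimes_B T\to N\otimes_A T$ of $\theta_N$ in the underlying graded category of $\C^{gr}(T)$ (forgetting the differentials), so that $\partial^{N\otimes_A T}\circ s-s\circ \partial^{N\otimes_B T}$ factors through the kernel of $\theta_N$ and represents $\omega_N^+$ after applying $\shift$ to pass from $N\otimes_B\shift^{-1}(T^+[1])$ to $N\otimes_B T^+[1]$. Since $N$ is semifree over $B$ with basis $\{e_\lambda\}_{\lambda\in\Lambda}$, the DG $T$-module $N\otimes_B T$ is free as a graded right $T$-module on $\{e_\lambda\otimes_B 1\}_{\lambda\in\Lambda}$; therefore the rule $e_\lambda\otimes_B t\mapsto e_\lambda\otimes_A t$ extends uniquely to a right $T$-linear, tensor grading preserving section $s$ of $\theta_N$, and this is the section I would use.

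The next step is a direct application of the Leibniz rule together with the given formula $\partial^N(e_\lambda)=\sum_{\mu<\lambda}e_\mu b_{\mu\lambda}$. I expect this to yield
\[
\partial^{N\otimes_A T}\bigl(s(e_\lambda\otimes_B t)\bigr)-s\bigl(\partial^{N\otimes_B T}(e_\lambda\otimes_B t)\bigr)=\sum_{\mu<\lambda}\bigl(e_\mu b_{\mu\lambda}\otimes_A t-e_\mu\otimes_A b_{\mu\lambda}t\bigr),
\]
because the two occurrences of the term $(-1)^{|e_\lambda|}e_\lambda\otimes d^T(t)$ cancel, thanks to the $T$-linearity of $s$. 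The right-hand side visibly lies in the kernel of $\theta_N$.

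Finally, to match this with the stated formula I would use the canonical isomorphism $B\otimes_A T\cong B^e\otimes_B T$ (absorbing the right factor of $B^e=B\otimes_A B$ into $T$ via $B\otimes_B T=T$), under which the image of $\delta(b)\otimes_B t=(b\otimes_A 1-1\otimes_A b)\otimes_B t\in J\otimes_B T$ inside $B^e\otimes_B T=B\otimes_A T$ equals $b\otimes_A t-1\otimes_A bt$. Tensoring on the left with $N$ over $B$, the expression computed above is precisely the image of $\sum_{\mu<\lambda}e_\mu\otimes_B \delta(b_{\mu\lambda})\otimes_B t$ in the tensor graded piece $N\otimes_B J\otimes_B T^n$ of $N\otimes_B\shift^{-1}(T^+[1])$; applying $\shift$ to land in $N\otimes_B T^+[1]$ then reinterprets each $\delta(b_{\mu\lambda})\in J$ as an element of $\shift J\subseteq T^+$ and yields the asserted formula for $\w^+_N$. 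The main obstacle will be the bookkeeping across the distinction between $\otimes_A$ and $\otimes_B$, and between $J$ and $\shift J$ under the shifts $\shift^{-1}$ and $[1]$; once this is sorted out, the homological content collapses to a short Leibniz-rule computation made clean by the $T$-linearity of $s$.
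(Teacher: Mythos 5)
Your proposal is correct and uses the same underlying idea as the paper: compute the connecting morphism $\omega_N^+$ via a graded (non-DG) splitting and a boundary calculation. The paper implements this at the level of the sequence $0\to N\otimes_B J\to N\otimes_B B^e\to N\to 0$, packaging the splitting and retraction as $\rho_N$ and $\sigma_N$, defining $\w_N^+=(\sigma_N\,\partial^{N\otimes_B B^e}\rho_N)\otimes_B\id_T$, and then citing \cite[Theorem~3.9]{NOY3} for the verification; you instead choose the right-$T$-linear graded section $s$ of $\theta_N$ directly on the short exact sequence~\eqref{eq20130118a} and carry out the Leibniz-rule computation inline, which makes the argument self-contained. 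The two are equivalent because your $s$ is exactly $\rho_N\otimes_B\id_T$ under $N\otimes_A T\cong N\otimes_B B^e\otimes_B T$, and your identification of $\sum_{\mu<\lambda}\bigl(e_\mu b_{\mu\lambda}\otimes_A t-e_\mu\otimes_A b_{\mu\lambda}t\bigr)$ with $\sum_{\mu<\lambda}e_\mu\otimes_B\delta(b_{\mu\lambda})\otimes_B t$ plays the role of $\sigma_N$. The one small thing worth stating explicitly, which the proposal glosses over, is that the connecting morphism built this way is a priori only determined up to sign and homotopy; you should either fix conventions so that the sign comes out as $+$ (as is implicitly done in the paper via NOY3) or note that $\omega_N^+=\pm[\w_N^+]$ suffices for every later use of this lemma.
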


\begin{proof}
Consider the right $B$-linear homomorphisms $\sigma_N\colon N\otimes_BB^e\to N\otimes_BJ$ and $\rho_N\colon N \to N\otimes_BB^e$ which are defined by the equalities
\begin{gather*}
\sigma_N(e_{\lambda}\otimes_B (b_1\otimes_A b_2))=e_{\lambda}\otimes_B(b_1\otimes_A b_2-1\otimes_A b_1b_2)\\
\rho_N(e_{\lambda} b)=e_{\lambda}\otimes_B (1\otimes_A b)
\end{gather*}
for a basis element $e_{\lambda}$ and homogeneous elements $b,b_1,b_2\in B$.
We define
$$\w^+_N=(\sigma_N\partial^{N\otimes_BB^{e}}\rho_N)\otimes_B\id_T.$$
The assertion now follows from our computations in~\cite[Theorem 3.9]{NOY3}.
\end{proof}

Next, for a semifree DG $B$-module $N$, we define an element $\omega_N$ in the tensor graded endomorphism ring $\Gamma_{N\otimes _BT}$ that plays a crucial role in this paper.

\begin{para}\label{para20230115e}
Let $N$ be a semifree DG $B$-module with a semifree basis $\mathcal{B}=\{ e_{\lambda}\}_{\lambda \in \Lambda}$. For $e_{\lambda}\in \mathcal{B}$ assume that $\partial ^N (e_{\lambda}) = \sum _{\mu < \lambda} e_{\mu} b_{\mu \lambda}$,
where $b _{\mu \lambda} \in B$. Let $$\w_N\colon N \otimes _B T \to N \otimes _B T [1]$$ be the composition $\varpi_N\w^+_N$ in $\C^{gr}(T)$, where  $\varpi_N\colon N \otimes _B T^+ [1]\to N \otimes _B T[1]$ is the natural inclusion map. Note that for all $e_{\lambda}\in \mathcal{B}$ and $t\in T$ we have
\begin{equation}\label{eq20230423a}
\w_N (e_{\lambda} \otimes_B t) = \sum _{\mu < \lambda} e_{\mu} \otimes _B \delta (b_{\mu \lambda} ) \otimes _B t.
\end{equation}
Therefore, $\omega_N:=[\w_N] \in \Gamma_{N\otimes_BT} ^1= \Hom _{\K^{gr} (T)} (N \otimes _B T, N \otimes _B T[1]) \subseteq \Gamma_{N\otimes _BT}$.

For all integers $\ell\geq 1$, set $\w_N^{\ell}:=\w_N[\ell-1]\circ \cdots \circ \w_N[1]\circ \w_N$. Then, for all basis elements $e_{\lambda}$ and $t\in T$, one can compute that
\begin{equation}\label{eq20230822a}
\w_N^{\ell}(e_{\lambda}\otimes_Bt)=\!\!\!\!\!\!\sum_{\mu_{\ell}<\cdots<\mu_{1}<\lambda}\!\!\!\!\!\!e_{\mu_{\ell}}\otimes_B\delta(b_{\mu_{\ell}\mu_{\ell-1}})\otimes_B\cdots\otimes_B\delta(b_{\mu_{2}\mu_{1}})\otimes_B\delta(b_{\mu_{1}\lambda})\otimes_Bt.
\end{equation}
\end{para}

\begin{para}
The definition of $\w_N$ from~\ref{para20230115e} is independent of the choice of the semifree basis for the semifree DG $B$-module $N$ in the following sense:
Let $\mathcal{B}'=\{e'_\lambda\}_{\lambda\in\Lambda}$ be another semifree basis of $N$ and $u\colon N\to N$ be the DG $B$-module automorphism such that $e'_\lambda=u(e_\lambda)$. Note that $\partial^N(e'_\lambda)=\sum_{\mu<\lambda}e'_{\mu}b_{\mu\lambda}$. Let $\w_N=\w_N^\mathcal{B}$ and consider $\w_N^\mathcal{B'}$ which is defined by a similar formula as~\eqref{eq20230423a}, i.e., $\w_N^{\mathcal{B'}}(e'_\lambda \otimes_B t)= \sum_{\mu<\lambda} e'_{\mu}\otimes_B \delta(b_{\mu\lambda})\otimes_B t$.
Then, we have $\w_N^{\mathcal{B'}}=(u\otimes_B \id_T)\w_N^{\mathcal{B}}u^{-1}$.

Also, it is important to note that $\w_N$ preserves the DG degree and is not a map of DG degree $-1$.
\end{para}

In the following result, for a positive integer $n$, we denote by $B^{\oplus n}$ the direct sum of $n$ copies of the DG $R$-algebra $B$.

\begin{prop}\label{cor20230124a}
Let $n$ be a positive integer. Then $\omega_{B^{\oplus n}}=0$ in $\K^{gr}(T)$.
\end{prop}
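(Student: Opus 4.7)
The plan is very short: choose the obvious semifree basis for $B^{\oplus n}$ and observe that the formula defining $\w_N$ in~\ref{para20230115e} collapses to zero.

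First I would take $\mathcal{B}=\{e_{1},\dots,e_{n}\}$ to be the standard basis of $N:=B^{\oplus n}$, where $e_{i}$ denotes the element whose $i$-th component is $1\in B_{0}$ and whose remaining components are $0$. This is a semifree basis of $N$ as a DG $B$-module, and each $e_{i}$ is a cycle of degree $0$, so $\partial^{N}(e_{i})=0$. Written in the format of~\ref{para20230115e}, this says precisely that the structure coefficients satisfy $b_{\mu\lambda}=0$ for every pair $\mu<\lambda$ (there are in fact no such pairs with nonzero contribution, since the basis is totally ordered arbitrarily but all differentials are zero).

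Next I would apply formula~\eqref{eq20230423a} directly: for any basis element $e_{\lambda}\in\mathcal{B}$ and any $t\in T$,
$$
\w_{N}(e_{\lambda}\otimes_{B}t)\;=\;\sum_{\mu<\lambda}e_{\mu}\otimes_{B}\delta(b_{\mu\lambda})\otimes_{B}t\;=\;0,
$$
because every $b_{\mu\lambda}=0$ and $\delta$ is $R$-linear with $\delta(0)=0$. Therefore $\w_{N}=0$ as a morphism in $\C^{gr}(T)$, and consequently $\omega_{B^{\oplus n}}=[\w_{N}]=0$ in $\Gamma_{N\otimes_{B}T}^{1}$, and a fortiori in $\K^{gr}(T)$.

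There is essentially no obstacle: the content of the proposition is merely that the explicit cocycle representative $\w_{N}$ provided by~\ref{para20230115e}, whose only inputs are the structure constants of $\partial^{N}$, vanishes identically on a basis of cycles. The only thing worth flagging is a brief justification that the chosen $\{e_{1},\dots,e_{n}\}$ really is a semifree basis of $B^{\oplus n}$ in the sense used in the paper; this is immediate from the definition since the filtration $0\subseteq \bigoplus_{i\leq 1}e_{i}B\subseteq\cdots\subseteq \bigoplus_{i\leq n}e_{i}B=N$ has successive quotients isomorphic to $B$.
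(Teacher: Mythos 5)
Your proof is correct, but it takes a genuinely different route from the paper's. The paper observes that the short exact sequence~\eqref{eq20130118a}, whose connecting morphism in $\K^{gr}(T)$ is $\omega^+_N$, is split when $N=B^{\oplus n}$ (because $\theta_{B^{\oplus n}}=\pi_{B^{\oplus n}}\otimes_B\id_T$ is a split epimorphism of tensor graded DG $T$-modules, $\pi_{B^{\oplus n}}$ itself being split by the obvious $B$-linear section), hence $\omega^+_{B^{\oplus n}}=0$ and a fortiori $\omega_{B^{\oplus n}}=[\varpi_{B^{\oplus n}}]\circ\omega^+_{B^{\oplus n}}=0$. You instead appeal to the explicit cocycle formula~\eqref{eq20230423a} from~\ref{para20230115e}: the standard basis $\{e_1,\dots,e_n\}$ is a semifree basis with all differentials zero, so every structure constant $b_{\mu\lambda}$ vanishes and $\w_{B^{\oplus n}}$ is the zero map on the nose, not merely null-homotopic. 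Your argument is slightly more elementary and in fact yields the stronger conclusion $\w_{B^{\oplus n}}=0$ in $\C^{gr}(T)$, whereas the paper's argument is more structural, fitting into the triangle machinery of Proposition~\ref{triangles} and avoiding the need to invoke the explicit basis description. Both are valid, and your closing remark justifying that $\{e_1,\dots,e_n\}$ really is a semifree basis (via the obvious finite filtration with quotients $\cong B$) is the right thing to check.
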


\begin{proof}
Note that the short exact sequence~\eqref{eq20130118a} is split for $N=B^{\oplus n}$ and hence, $\omega_{B^{\oplus n}}^+=0$ in $\K^{gr}(T)$. This implies that $\omega_{B^{\oplus n}}=0$ in $\K^{gr}(T)$.
\end{proof}

\begin{prop}\label{lem20230115b}
Assume that $N$ is a semifree DG $B$-module that is bounded below (i.e., $N_i=0$ for all $i\ll 0$).
Then, $\w_N$ is locally nilpotent, that is, for each element $x  \in N \otimes _B T$, there exists a positive integer $n_x$ such that $\w_N^{n_x}(x)=0$, where $\w_N^{n_x}$ is the composition morphism $\w_N[n_x-1]\circ \cdots\circ \w_N[1]\circ \w_N$.
\end{prop}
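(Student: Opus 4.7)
The plan is to exploit the explicit formula for $\w_N^\ell$ recorded in \eqref{eq20230822a} together with the key fact that $\inf J>0$ (see the discussion in the last paragraph before Section~3) and the boundedness-below hypothesis on $N$. The argument is essentially a degree count, and no homotopy considerations are needed because $\w_N$ (as opposed to $\omega_N$) is a specific element of $\Hom_{\C^{gr}(T)}(N\otimes_B T, N\otimes_B T[1])$.

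First, by $R$-linearity, it suffices to show the claim for elements of the form $x=e_\lambda\otimes_B t$, since any $x\in N\otimes_B T$ is a finite sum of such elements and we may then take the maximum of the corresponding $n_x$'s. Fix a lower bound $d\in\bbz$ with $N_i=0$ for all $i<d$, so that every basis element $e_\mu$ satisfies $|e_\mu|\geq d$.

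Now apply \eqref{eq20230822a} and examine a single summand
\[
e_{\mu_\ell}\otimes_B\delta(b_{\mu_\ell\mu_{\ell-1}})\otimes_B\cdots\otimes_B\delta(b_{\mu_1\lambda})\otimes_B t,
\]
setting $\mu_0:=\lambda$. For this summand to be nonzero, each factor $\delta(b_{\mu_j\mu_{j-1}})$ must be a nonzero element of $J$, which forces $|b_{\mu_j\mu_{j-1}}|\geq \inf J\geq 1$ for every $j$. Combining this with the relation $|e_{\mu_j}|+|b_{\mu_j\mu_{j-1}}|=|e_{\mu_{j-1}}|-1$ coming from the Leibniz rule applied to $\partial^N(e_{\mu_{j-1}})$, we get
\[
|e_{\mu_j}|\leq |e_{\mu_{j-1}}|-2\quad\text{for each }j=1,\dots,\ell.
\]
Iterating gives $|e_{\mu_\ell}|\leq |e_\lambda|-2\ell$. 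The lower bound $|e_{\mu_\ell}|\geq d$ then forces $\ell\leq (|e_\lambda|-d)/2$.

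Therefore, as soon as $n_x>(|e_\lambda|-d)/2$, every summand in \eqref{eq20230822a} for $\ell=n_x$ must be zero, and hence $\w_N^{n_x}(e_\lambda\otimes_B t)=0$. Taking $n_x:=\lfloor(|e_\lambda|-d)/2\rfloor+1$ completes the argument. I expect no real obstacle here: the only subtle point is remembering to use $\inf J\geq 1$ (equivalently, the positivity of $\overline B$) to ensure each $\delta(b_{\mu_j\mu_{j-1}})$ strictly lowers the basis degree by at least~$2$, so that the bounded-below hypothesis on $N$ actually terminates the process.
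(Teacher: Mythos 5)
Your proof is correct. It takes a genuinely different, more explicit route than the paper's. The paper argues abstractly: an element $x \in (N \otimes_B T^i)_m$ has its DG degree preserved and its tensor degree raised by $\w_N$, so $\w_N^n(x) \in (N \otimes_B T^{i+n})_m = (N \otimes_B J^{\otimes_B(i+n)})_{m-i-n}$; since the DG modules $N \otimes_B J^{\otimes_B(i+n)}$ are all bounded below by the bound on $N$ (using only that $J$ is non-negatively graded), the DG degree $m-i-n$ eventually drops beneath this bound and the element dies. Your argument instead reduces to $x = e_\lambda \otimes_B t$, invokes the explicit formula \eqref{eq20230822a} for $\w_N^\ell$, and tracks the strictly descending degrees $|e_{\mu_j}|$ of the intervening basis elements, using $\inf J \geq 1$ (equivalently the positivity of $\overline{B}$) to deduce the drop $|e_{\mu_j}| \leq |e_{\mu_{j-1}}| - 2$ at each stage. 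Both approaches terminate by boundedness-below of $N$. Yours gives a slightly sharper bound on $n_x$ (roughly half) because it exploits $\inf J \geq 1$ rather than merely $\inf J \geq 0$, at the cost of invoking the concrete formula for the iterates and a basis-by-basis bookkeeping; the paper's version is shorter and does not need \eqref{eq20230822a} at all, relying only on the tensor-grading formalism.
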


Note that the locally nilpotent property does not mean that $\w_N$ itself is nilpotent; compare this proposition with Theorem~\ref{equivalent conditions}.

\begin{proof}
Assume that $x\in N\otimes_B T$ is an element of the DG degree $m$ and of the tensor degree $i$, that is,  $x \in (N \otimes _B T^i)_m$.
Then, for each integer $n$ we have $$\w_N ^n(x) \in  (N \otimes _B T^{i+n})_m = (N \otimes _B J^{\otimes_B (i+n)})_{-i-n+m}$$
which vanishes if  $n \gg  m-i$ since the lower bound of $N \otimes _B J^{\otimes_B (i+n)}$ is not less than that of $N$. Hence, such a positive integer $n_x$ with $\w_N ^{n_x}(x)=0$ exists.
\end{proof}

The map $\omega_N$ can also be described functorially as follows.

\begin{prop}\label{lem20230117a}
Let $N,N'$ be semifree DG $B$-modules and $f\colon N \to N'$  be a DG $B$-module homomorphism.
Then, there exists a commutative diagram
\begin{equation}\label{eq20230117a}
\xymatrix{
N \otimes _B T  \ar[r]^-{\omega _N} \ar[d]_{[f\otimes_B \id_T]} & N \otimes _B T[1] \ar[d]^{[f\otimes_B \id_{T[1]}]} \\
N' \otimes _B T  \ar[r]^-{\omega _{N'}}  & N' \otimes _B T[1] \\
}
\end{equation}
in $\K^{gr} (T)$.
\end{prop}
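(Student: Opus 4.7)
The plan is to reduce this to the naturality of the connecting morphism $\omega_N^+$ of Proposition~\ref{triangles}(b), since by construction $\omega_N = [\varpi_N]\circ \omega_N^+$, where $\varpi_N \colon N\otimes_B T^+[1]\to N\otimes_B T[1]$ is the natural inclusion.

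First, I would establish that the short exact sequence~\eqref{eq20130118a} is natural in $N$. Because $J$, and hence each $T^n$, $T^+$, and $T$, is free as an underlying graded left $B$-module, the sequence~\eqref{eq20230115a} splits as a sequence of underlying graded left $B$-modules. Therefore $N\otimes_B(-)$ preserves its exactness for every DG $B$-module $N$, and the given map $f\colon N\to N'$ produces a commutative diagram of short exact sequences in $\C^{gr}(T)$:
$$
\xymatrix{
0 \ar[r] & N \otimes_B \shift^{-1}(T^+[1]) \ar[r] \ar[d] & N \otimes_A T \ar[r]^-{\theta_N} \ar[d]_-{f \otimes_A \id_T} & N \otimes_B T \ar[r] \ar[d]^-{f \otimes_B \id_T} & 0 \\
0 \ar[r] & N' \otimes_B \shift^{-1}(T^+[1]) \ar[r] & N' \otimes_A T \ar[r]^-{\theta_{N'}} & N' \otimes_B T \ar[r] & 0.
}
$$
By the standard naturality of the connecting morphism of a short exact sequence in the triangulated category $\K^{gr}(T)$, this diagram upgrades to a commutative square with top arrow $\omega_N^+$, bottom arrow $\omega_{N'}^+$, and vertical arrows $[f\otimes_B \id_T]$ and $[f\otimes_B \id_{T^+[1]}]$.

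Finally, I would post-compose both horizontal arrows with $\varpi_N$ and $\varpi_{N'}$. The identity
$$(f\otimes_B \id_{T[1]})\circ \varpi_N \;=\; \varpi_{N'}\circ (f\otimes_B \id_{T^+[1]})$$
holds on the nose in $\C^{gr}(T)$ by direct inspection of the definitions, and combined with $\omega_N=[\varpi_N]\circ \omega_N^+$ and its analogue for $N'$ this produces the required equality $[f\otimes_B \id_{T[1]}]\circ \omega_N = \omega_{N'}\circ [f\otimes_B \id_T]$ in $\K^{gr}(T)$. The only mildly delicate step is the exactness claim underlying the first display, which is where the left $B$-freeness of $J$ is crucially used; once this is in hand, the rest is formal naturality and requires essentially no further calculation. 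A more computational alternative would verify the square directly from formula~\eqref{eq20230423a} by writing $f(e_\lambda)=\sum_\nu e'_\nu c_{\nu\lambda}$ in terms of chosen semifree bases, but the relation $f\partial^N=\partial^{N'}f$ then has to be unwound into an explicit homotopy, which the naturality route avoids entirely.
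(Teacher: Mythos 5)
Your proposal is correct and follows essentially the same route as the paper's own proof: naturality of the short exact sequence~\eqref{eq20130118a} in $N$, naturality of the resulting connecting morphism $\omega_N^+$ in $\K^{gr}(T)$, compatibility of $f\otimes_B\id$ with the inclusions $\varpi_N$, $\varpi_{N'}$, and then the factorization $\omega_N=[\varpi_N]\circ\omega_N^+$. The only cosmetic difference is that you justify exactness of $N\otimes_B(-)$ on~\eqref{eq20230115a} by a left-$B$-splitting observation, whereas the paper tacitly relies on semifreeness of $N$ (both are valid).
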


\begin{proof}
There is a commutative diagram
$$
\xymatrix{
0 \ar[r] & N \otimes _B (\shift^{-1}T^+)[1] \ar[rr] \ar[d]^{f \otimes_B \id_{(\shift^{-1}T^+)[1]}} && N \otimes _AT \ar[rr] \ar[d]^{f \otimes_A \id_T} && N \otimes _BT  \ar[r] \ar[d]^{f \otimes _B \id_T}& 0 \\
0 \ar[r] & N' \otimes _B (\shift^{-1}T^+)[1] \ar[rr] && N' \otimes _AT \ar[rr] && N' \otimes _BT  \ar[r] & 0 \\
}
$$
with exact rows in $\C^{gr}(T)$. This induces the commutative diagram
$$
\xymatrix{
N \otimes _B (\shift^{-1}T^+)[1] \ar[r] \ar[d]^{[f \otimes _B \id_{(\shift^{-1}T^+)[1]}]} & N \otimes _AT \ar[r] \ar[d]^{[f \otimes _A \id_T]} & N \otimes _BT  \ar[rr]^-{\omega ^+ _N}  \ar[d]^{[f \otimes _B \id_T]}&& N \otimes _B T^+[1]  \ar[d]_{[f \otimes _B \id_{T^+[1]}]} \\
N' \otimes _B (\shift^{-1}T^+)[1] \ar[r]& N' \otimes _AT \ar[r] & N' \otimes _BT  \ar[rr]^-{\omega ^+_{N'}} && N' \otimes _B T^+[1] \\
}
$$
with triangle rows in $\K ^{gr}(T)$. On the other hand, the diagram
$$
\xymatrix{
N \otimes _B T^+[1] \ar[r] ^{[\varpi_N]} \ar[d] _{[f \otimes _B \id_{T^+[1]}]} & N \otimes _B T[1] \ar[d] ^{[f \otimes _B \id_{T[1]}]} \\
N' \otimes _B T^+[1] \ar[r] ^{[\varpi_{N'}]} & N \otimes _B T[1]
}
$$
is also commutative.
The existence of the commutative diagram~\eqref{eq20230117a} now follows from the equalities $\omega _N=[\varpi_N]\circ \omega ^+_{N}$ and $\omega _{N'}=[\varpi_{N'}]\circ \omega ^+_{N'}$.
\end{proof}

The next result indicates that $\omega_N$ commutates with the elements of $\Gamma _{N \otimes _B T}$.

\begin{cor}\label{cor20230117a}
Let $N$ be a semifree DG $B$-module. If  $f \in \End _{\K(B)} (N)$, then we have the equality
$(f \otimes _B \id_{T[1]}) \omega_N = \omega_N (f \otimes _B\id_T)$ in $\K^{gr}(T)$.
\end{cor}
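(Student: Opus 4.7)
The plan is to deduce this as a direct consequence of Proposition~\ref{lem20230117a} applied in the case $N' = N$. First, I would observe that any class $f \in \End_{\K(B)}(N)$ admits a representative $\widetilde{f} \in \End_{\C(B)}(N)$, since by definition $\End_{\K(B)}(N) = \End_{\C(B)}(N)/\sim$. Applying Proposition~\ref{lem20230117a} to $\widetilde{f}\colon N \to N$ (viewed as a DG $B$-module homomorphism) yields the commutative square
$$
\xymatrix{
N \otimes _B T  \ar[r]^-{\omega _N} \ar[d]_{[\widetilde{f}\otimes_B \id_T]} & N \otimes _B T[1] \ar[d]^{[\widetilde{f}\otimes_B \id_{T[1]}]} \\
N \otimes _B T  \ar[r]^-{\omega _{N}}  & N \otimes _B T[1] \\
}
$$
in $\K^{gr}(T)$, whose commutativity is exactly the desired identity
$(f \otimes_B \id_{T[1]}) \omega_N = \omega_N (f \otimes_B \id_T)$.

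The only minor thing to check is that the expressions $f \otimes_B \id_T$ and $f \otimes_B \id_{T[1]}$ are well-defined on the level of homotopy classes, i.e.\ are independent of the choice of representative. This is routine: if $\widetilde{f} \sim \widetilde{g}$ via a graded $B$-linear homotopy $h\colon N \to \shift^{-1}N$ with $\widetilde{f}-\widetilde{g} = \partial^N h + h \partial^N$, then $h \otimes_B \id_T$ is a graded $T$-linear homotopy in $\C^{gr}(T)$ witnessing $\widetilde{f}\otimes_B\id_T \sim \widetilde{g}\otimes_B \id_T$, and similarly after shifting. Thus the functor $- \otimes_B T$ descends to a functor $\K(B) \to \K^{gr}(T)$, making the application of Proposition~\ref{lem20230117a} legitimate. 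No substantive obstacle is anticipated, as the entire content was already packaged into the preceding proposition; the corollary merely records the endomorphism case where the commutation acquires its algebraic meaning, namely that $\omega_N$ commutes with the image of $\End_{\K(B)}(N)$ inside $\Gamma_{N \otimes_B T}$.
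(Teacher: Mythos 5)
Your proposal is correct and takes the same approach the paper implicitly intends: the corollary is the special case $N'=N$ of Proposition~\ref{lem20230117a}, and the paper gives no separate proof, treating it as immediate. Your extra check that $f\otimes_B\id_T$ is well-defined on homotopy classes is a reasonable (if routine) point to spell out, and does not change the argument.
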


\section{Hom sets in the homotopy categories}\label{sec20230425s}

In this section, for a semifree DG $B$-module $N$ and integers $m$, we discuss the action of $\shift^m\omega_N$ as an element in the ring $\Gamma_{N \otimes _B T}$ on the graded right $\Gamma_{N \otimes _B T}$-modules ${}^*\!\Hom _{\K^{gr}(T)}(N \otimes _B T, N \otimes _B (\shift^m T))$, assuming that condition $\AR$, defined in~\ref{para20230424v} below, holds for $N$. Our main result in this section is Theorem~\ref{main} whose proof takes up the balance of this section and involves several steps, including an analysis of the mapping cone of $\omega_N$.

\begin{para}\label{para20230424v}
We say that $\AR$ holds for a DG $B$-module $N$ if:
\begin{enumerate}[\rm(i)]
\item\label{AR-1-1}
$N$  is  a semifree DG $B$-module that is non-negatively graded;
\item\label{AR-1-2}
$B$ and $N$ are perfect considered as DG $A$-modules via $\varphi$; and
\item\label{AR-1-3}
$\Hom_{\K(B)}(N, \shift^nB) = 0$ for all integers $n\geq 1$. 
\end{enumerate}
\end{para}

\begin{para}
Note that $\AR$ is not necessarily preserved under degree shift. More precisely, if $\AR$ holds for a DG $B$-module $N$ and $i$ is a non-zero integer, then $\shift^iN$ may not satisfy conditions \eqref{AR-1-1} and \eqref{AR-1-3}. 
\end{para}

\begin{thm}\label{main}
If $\AR$ holds for a DG $B$-module $N$, then the map
\begin{align*}
\Hom_{\K^{gr}(T)}(N\otimes_B T, \shift^m(N\otimes _BT[n])) &\xra{\shift^m(\omega_N[n])\circ-}&\\
&\Hom_{\K^{gr}(T)}(N\otimes_B T, \shift^m(N\otimes _BT [n+1]))&
\end{align*}
which is defined by the left composition by $\shift^m(\omega_N[n])$ is always surjective for all $n, m \geq 0$ and is bijective if either
$m \geq 1$ and $n \geq 0$, or $m =0$ and  $n\geq 1$.
In particular,  the left action of  $\shift^m\omega_N$  on
${}^*\!\Hom  _{\K^{gr}(T)}(N \otimes _B T, N \otimes _B (\shift^m T))$ is surjective for all $m \geq 0$.
\end{thm}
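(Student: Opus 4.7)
The plan is to analyze the mapping cone $C := \cone(\omega_N)$ of $\omega_N$ in $\K^{gr}(T)$ and translate the surjectivity and bijectivity assertions into vanishing of $\Hom_{\K^{gr}(T)}(N \otimes_B T, \Sigma^p C[n])$. Applying $\Hom_{\K^{gr}(T)}(N \otimes_B T, -)$ to the $\Sigma^m[n]$-shifted distinguished triangle $N \otimes_B T \xrightarrow{\omega_N} N \otimes_B T[1] \to C \to \Sigma(N \otimes_B T)$ produces a long exact sequence containing $\Sigma^m \omega_N[n] \circ -$. Surjectivity for all $n, m \geq 0$ reduces to $\Hom_{\K^{gr}(T)}(N \otimes_B T, \Sigma^m C[n]) = 0$, and bijectivity (in the stated ranges) requires additionally the analogous vanishing with $m$ replaced by $m-1$.

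To identify $C$ I use the factorization $\omega_N = \varpi_N \circ \omega_N^+$ together with the octahedral axiom. Proposition \ref{triangles}(b) gives $\cone(\omega_N^+) \simeq \Sigma(N \otimes_A T)$; tensoring the short exact sequence $0 \to T^+ \to T \to B \to 0$ in $\C^{gr}(T)$ with $N$ over $B$ (exact since $N$ is semifree) and shifting by $[1]$ produces $\cone(\varpi_N) \simeq N[1]$, with $N[1]$ concentrated in tensor degree $-1$. The octahedron then yields a triangle
\[
\Sigma(N \otimes_A T) \to C \to N[1] \to \Sigma^2(N \otimes_A T)
\]
in $\K^{gr}(T)$. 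Because $(N[n+1])^0 = 0$ for every $n \geq 0$, Lemma \ref{lem20230116a} gives $\Hom_{\K^{gr}(T)}(N \otimes_B T, \Sigma^q N[n+1]) = 0$ for every $q$, and the long exact sequence of the above triangle collapses to the isomorphism
\[
\Hom_{\K^{gr}(T)}(N \otimes_B T, \Sigma^p C[n]) \cong \Hom_{\K(B)}(N, \Sigma^{p+1} N \otimes_A T^n) = \Ext_B^{p+1}(N, N \otimes_A T^n),
\]
again via Lemma \ref{lem20230116a}. This reduces the theorem to proving $\Ext_B^\ell(N, N \otimes_A T^n) = 0$ for $\ell \geq 1$, $n \geq 0$, and additionally for $\ell = 0$ whenever $n \geq 1$.

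Condition $\AR$ furnishes a finite filtration of $N$ by DG $A$-submodules whose graded pieces are finite direct sums of $\Sigma^s A$ with $s \geq 0$, thanks to the non-negative grading of $N$. Tensoring this filtration with $T^n$ over $A$ and running the associated long exact sequences for $\Ext_B(N,-)$ reduces the targeted vanishings to
\[
\Ext_B^\ell(N, J^n) = 0 \qquad \text{for all } \ell \geq 1 \text{ and all } n \geq 0,
\]
after writing $T^n \cong \Sigma^n J^n$ with $J^n := J^{\otimes_B n}$.

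The final step proves this Ext-vanishing by induction on $n$. The base case $n = 0$ is exactly condition (iii) of $\AR$. The key technical input for the inductive step is the observation that the sequence \eqref{eq20230113a}, $0 \to J^n \to B \otimes_A J^{n-1} \to J^{n-1} \to 0$, \emph{splits as DG right $B$-modules} via the section $j \mapsto 1 \otimes j$ of the multiplication $B \otimes_A J^{n-1} \to J^{n-1}$ (easily checked to be DG right $B$-linear). Consequently
\[
\Ext_B^\ell(N, B \otimes_A J^{n-1}) \cong \Ext_B^\ell(N, J^n) \oplus \Ext_B^\ell(N, J^{n-1})
\]
in $\K(B)$. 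Combined with the perfect structure of $B$ as a DG $A$-module (its finite filtration has graded pieces $\bigoplus \Sigma^t A$ with $t \geq 0$), the module $B \otimes_A J^{n-1}$ acquires a finite filtration whose graded pieces are sums of $\Sigma^t J^{n-1}$ with $t \geq 0$, so by the inductive hypothesis $\Ext_B^\ell(N, B \otimes_A J^{n-1}) = 0$ for $\ell \geq 1$, and the direct summand decomposition yields $\Ext_B^\ell(N, J^n) = 0$ for $\ell \geq 1$, closing the induction. I expect the main obstacle to be the recognition and exploitation of this right $B$-linear splitting of \eqref{eq20230113a}; once it is in hand, the rest of the argument unfolds mechanically.
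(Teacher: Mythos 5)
Your proposal follows the same top-level architecture as the paper: analyze $C(\omega_N)$ via the octahedral axiom built from the factorization $\omega_N=[\varpi_N]\circ\omega_N^+$, collapse the resulting $\Hom$-groups to $\K(B)$ via the adjointness Lemma~\ref{lem20230116a}, and reduce the (bi)surjectivity to vanishings of the form $\Hom_{\K(B)}(N,\shift^{k}(N\otimes_A J^{\otimes_B n}))$. You diverge in how you establish that Ext-vanishing. The paper first proves (Lemma~\ref{first vanishing}) that $\Hom_{\K(B)}(N,\shift^m J^{\otimes_B n})=0$ for all $m>-n$, which rests on the quasi-perfectness of $J^{\otimes_B n}$ as a DG $B$-module with basis elements in degrees $\geq n$, imported from \cite[Proposition~2.9]{NOY2}; it then bootstraps to Lemma~\ref{vanishing}(a) by filtering $N$ using $\AR$(ii). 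You instead observe that~\eqref{eq20230113a} splits as a sequence of DG \emph{right} $B$-modules via the section $j\mapsto 1\otimes_A j$ (it is not a $B^e$-splitting, but that is all you need), giving $B\otimes_A J^{\otimes_B(n-1)}\cong J^{\otimes_B n}\oplus J^{\otimes_B(n-1)}$ in $\K(B)$, and you run an induction on $n$ with base case $\AR$(iii), using the perfect filtration of $B$ over $A$ to control $\Ext_B^{\ell}(N,B\otimes_A J^{\otimes_B(n-1)})$. This route is self-contained: it avoids the degree-bound input from \cite{NOY2}, at the cost of proving only the range $\ell\geq 1$ rather than the paper's $\ell>-n$, which is nevertheless exactly the range the theorem requires. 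Both arguments filter $N$ over $A$ to pass from $J^{\otimes_B n}$ to $N\otimes_A J^{\otimes_B n}$; the real difference is your splitting-plus-induction replacement of the paper's Lemma~\ref{first vanishing}. That is a nice and legitimately different shortcut, not used in the paper.
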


The proof of Theorem~\ref{main} will be given near the end of this section.

\begin{para}\label{para20230116a}
For each integer $n\geq 0$, let $\overline{B} ^{\otimes_A n}= \overline{B} \otimes_A  \cdots \otimes_A  \overline{B}$ be the $n$-fold tensor product of $\overline{B}$ over $A$ with the convention that $\overline{B} ^{\otimes_A 0}=A$.
Note that if $B$ is a perfect DG $A$-module via $\varphi$, then $\overline{B}$ is quasiisomorphic to a semifree DG $A$-module with a finite semifree basis that consists of basis elements with positive DG degrees. Hence, for each integer $n\geq 0$, the DG $A$-module $\overline{B}^{\otimes _A n}$ is quasiisomorphic to a semifree DG $A$-module with a finite semifree basis that consists of basis elements of DG degrees $\geq n$.
It follows from \cite[Proposition 2.9]{NOY2} that $J ^{\otimes _B n}$  is also quasiisomorphic to a semifree DG $B$-module with a finite semifree basis consisting of basis elements of degrees $\geq n$.
\end{para}

\begin{lem}\label{first vanishing}
If $\AR$ holds for a DG $B$-module $N$, then
\begin{equation}\label{eq20230124a}
\Hom _{\K(B)}(N, \shift^m(J^{\otimes _B n})) = 0
\end{equation}
for all integers $n \geq 0$ and $m >  -n$.
\end{lem}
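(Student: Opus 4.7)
The plan is to reduce to the vanishing condition~\eqref{AR-1-3} of $\AR$ via a finite semifree filtration. When $n=0$, the claim is vacuous apart from the case $J^{\otimes_B 0} = B$, where $m > 0$ forces $m \geq 1$ and the vanishing is exactly condition~\eqref{AR-1-3}. So the real content lies in the range $n \geq 1$.

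For $n \geq 1$, I would invoke~\ref{para20230116a}: because $B$ is perfect as a DG $A$-module via $\varphi$, the DG $B$-module $J^{\otimes_B n}$ is quasiisomorphic to a semifree DG $B$-module $F$ that admits a finite semifree basis whose elements all have DG degrees at least $n$. Equivalently, $F$ carries a finite filtration
\[
0 = F_0 \subseteq F_1 \subseteq \cdots \subseteq F_k = F
\]
of DG $B$-submodules such that every subquotient $F_j / F_{j-1}$ is isomorphic to $\shift^{d_j} B$ with $d_j \geq n$. Since $\Hom_{\K(B)}(N, \shift^m J^{\otimes_B n}) \cong \Hom_{\K(B)}(N, \shift^m F)$, it suffices to show that $\Hom_{\K(B)}(N, \shift^m F_j) = 0$ for all $m > -n$ and all $0 \leq j \leq k$.

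I will carry this out by induction on $j$. The base case $j = 0$ is trivial. For the inductive step, the filtration produces a distinguished triangle
\[
F_{j-1} \longrightarrow F_j \longrightarrow \shift^{d_j} B \longrightarrow \shift F_{j-1}
\]
in $\K(B)$. Applying $\Hom_{\K(B)}(N, \shift^m -)$ yields the exact sequence
\[
\Hom_{\K(B)}(N, \shift^m F_{j-1}) \longrightarrow \Hom_{\K(B)}(N, \shift^m F_j) \longrightarrow \Hom_{\K(B)}(N, \shift^{m+d_j} B).
\]
The left term vanishes by the inductive hypothesis (applied at the same integer $m > -n$), and the right term vanishes by condition~\eqref{AR-1-3} of $\AR$, since $m + d_j > -n + n = 0$ forces $m + d_j \geq 1$. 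Hence the middle term vanishes, completing the induction and hence the proof when we set $j = k$.

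No real obstacle is expected: once the semifree model $F$ with basis in degrees $\geq n$ is in hand from~\cite[Proposition 2.9]{NOY2} (as recorded in~\ref{para20230116a}), the rest is bookkeeping on the inequality $m + d_j \geq 1$ and a standard dévissage along the filtration. The only subtlety is to ensure that the inductive hypothesis is quantified uniformly over \emph{all} $m > -n$, so that it can be fed back into the long exact sequence at the same shift without drift.
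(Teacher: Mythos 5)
Your proof is correct and follows essentially the same approach as the paper: you invoke the finite semifree filtration of $J^{\otimes_B n}$ with subquotients in DG degrees at least $n$ (guaranteed by the perfectness of $B$ over $A$, as recorded in~\ref{para20230116a}), and then d\'evissage along the filtration using condition $\AR$\eqref{AR-1-3}. Your separate treatment of $n=0$ is harmless but unnecessary, since the filtration argument handles that case uniformly.
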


\begin{proof}
By our discussion in~\ref{para20230116a} there is a finite filtration
$$
0=F_0 \subseteq F_1 \subseteq F_2 \subseteq \cdots \subseteq F_i \simeq J^{\otimes _B n}
$$
of DG $B$-submodules such that each $F_k/F_{k-1}$ is a finite direct sum of copies of $\shift^{r_k}B$ with $r_k \geq n$.
Condition $\AR$\eqref{AR-1-3} implies that $\Hom _{\K(B)} (N, \shift^m(\shift^{r_k}B))=0$ for all $m > -n \geq -r_k$.
Therefore, by induction on $k$ we have $\Hom _{\K(B)} (N, \shift^mF_k)=0$ for all $1\leq k\leq i$ and $m > -n$. In particular, the equality~\eqref{eq20230124a} holds.
\end{proof}

\begin{lem}\label{vanishing}
If $\AR$ holds for a DG $B$-module $N$, then
\begin{enumerate}[\rm(a)]
\item
$\Hom _{\K(B)}(N, N \otimes _A \shift^m(J^{\otimes _B n})) = 0$
for all $n \geq 0$ and $m > -n$.

\item
$\Hom _{\K^{gr}(T)} (N \otimes _B T,  N \otimes _A \shift^m(T[n])) =0$ for all $n \geq 0$ and $m > -2n$.
\end{enumerate}
\end{lem}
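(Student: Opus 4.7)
\medskip

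The plan is to reduce both parts to Lemma \ref{first vanishing} by exploiting the perfectness hypothesis in $\AR$\eqref{AR-1-2} together with the adjointness of Lemma \ref{lem20230116a}.

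For part (a), I would first use the fact that $N$ is perfect as a DG $A$-module (via $\varphi$) to produce a finite filtration
\[
0=N_0\subseteq N_1\subseteq \cdots \subseteq N_s = N
\]
of right DG $A$-submodules such that each quotient $N_k/N_{k-1}$ is a finite direct sum of shifts $\shift^{r_k}A$ with $r_k\geq 0$. Applying $-\otimes_A J^{\otimes_B n}$ to this filtration (where $J^{\otimes_B n}$ carries its left $A$-action through $\varphi$ and its right $B$-action) yields a filtration
\[
0\subseteq N_1\otimes_A J^{\otimes_B n}\subseteq \cdots \subseteq N\otimes_A J^{\otimes_B n}
\]
in $\C(B)$ whose successive quotients are finite direct sums of $\shift^{r_k}(J^{\otimes_B n})$. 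By Lemma \ref{first vanishing}, $\Hom_{\K(B)}(N,\shift^{m+r_k}(J^{\otimes_B n}))=0$ whenever $m+r_k>-n$; since $r_k\geq 0$ this is guaranteed by the hypothesis $m>-n$. An induction on $k$, using the long exact sequence of Hom in the triangulated category $\K(B)$ attached to each triangle
\[
N_{k-1}\otimes_A J^{\otimes_B n}\to N_k\otimes_A J^{\otimes_B n}\to (N_k/N_{k-1})\otimes_A J^{\otimes_B n}\to,
\]
then gives $\Hom_{\K(B)}(N,\shift^m(N\otimes_A J^{\otimes_B n}))=0$, as desired.

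For part (b), I would apply Lemma \ref{lem20230116a} to identify
\[
\Hom_{\K^{gr}(T)}\bigl(N\otimes_B T,\,\shift^m(N\otimes_A T[n])\bigr)\cong \Hom_{\K(B)}\bigl(N,\,\bigl(\shift^m(N\otimes_A T[n])\bigr)^0\bigr).
\]
The $0$-th tensor graded component of $T[n]$ is $T^n=(\shift J)^{\otimes_B n}=\shift^n(J^{\otimes_B n})$, so the right-hand Hom equals
\[
\Hom_{\K(B)}\bigl(N,\,N\otimes_A \shift^{m+n}(J^{\otimes_B n})\bigr).
\]
Part (a) then applies with shift $m+n$ in place of $m$, and the vanishing holds provided $m+n>-n$, i.e. $m>-2n$, which is exactly the hypothesis.

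The only place where something substantive has to be checked is the correct tracking of tensor-grading versus DG-shift conventions when identifying the $0$-th tensor graded part (which involves the built-in shift $\shift$ in $\shift J$), together with verifying that the filtration of $N$ over $A$ really does remain a filtration of right DG $B$-modules after tensoring with $J^{\otimes_B n}$ over $A$. Both are bookkeeping points rather than conceptual obstacles, so I expect no serious difficulty.
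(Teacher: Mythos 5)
Your argument matches the paper's own proof essentially step for step: part (a) uses the finite filtration of $N$ over $A$ coming from $\AR$\eqref{AR-1-2}, tensors it with $J^{\otimes_B n}$, and reduces to Lemma \ref{first vanishing} by induction over the resulting triangles, while part (b) uses the adjointness of Lemma \ref{lem20230116a} to reduce to part (a) with the shift $m+n$ and threshold $m+n>-n$. The bookkeeping you flag (the $\shift^n$ hidden inside $T^n=(\shift J)^{\otimes_B n}$, and the fact that $J^{\otimes_B n}$ is graded-free over $A$ so the filtration survives tensoring) is handled correctly.
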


\begin{proof}
(a) By $\AR$\eqref{AR-1-2}, there is a finite filtration
$$
0=F_0 \subseteq F_1 \subseteq F_2 \subseteq \cdots \subseteq F_i \simeq N
$$
of DG $A$-submodules, where each $F_k/F_{k-1}$ is a finite direct sum of copies of $\shift^{r_k}A$ with $r_k \geq 0$.
Thus, fixing an integer $n \geq 0$,  the DG $B$-module $N \otimes _A J^{\otimes _B n}$ has a finite filtration
$$
0=F'_0 \subseteq F'_1 \subseteq F'_2 \subseteq \cdots \subseteq F'_i \simeq N\otimes _A J^{\otimes _B n}
$$
of DG $B$-submodules, where
$F'_k = F_k \otimes _A  J^{\otimes _B n}$ with each $F'_k/F'_{k-1}$ being a finite direct sum of copies of $\shift^{r_k}(J^{\otimes _B n})$ with $r_k \geq 0$.
Lemma~\ref{first vanishing} implies that
$\Hom _{\K(B)} (N, \shift^m(F'_k/F'_{k-1}))=0$ for all $1\leq k\leq i$ and $m >-n$.
Now, by induction on $1 \leq k \leq i$, we see that  $\Hom _{\K(B)} (N, \shift^m F'_k)=0$  for  all $m >-n$, as well.

(b) By Lemma~\ref{lem20230116a}, we only need to show that
$\Hom _{\K(B)} (N,  N \otimes _A \shift^mT^n) =0$ for all $n \geq 0$ and $m > -2n$.
Equivalently, we need to show that the equality
$\Hom _{\K(B)} (N, N \otimes _A \shift^{n+m}(J^{\otimes _B n})) =0$ holds for all $n \geq 0$ and $n+m > -n$. However, this follows from part (a) and the proof is complete.
\end{proof}



\begin{para}
For a semifree DG $B$-module $N$, the mapping cone $C(\omega_N)$ of $\omega_N$ is an object in $\K^{gr}(T)$.
Moreover, the following triangle exists in $\K^{gr}(T)$:
\begin{equation}\label{cone triangle}
\shift^{-1}C(\omega_N) \to N \otimes _B T  \xra{\omega_N} N \otimes _B T[1]  \to C(\omega_N).
\end{equation}
\end{para}

\begin{lem}\label{cone}
Let $N$ be a semifree DG $B$-module. Then, there is a triangle
\begin{equation}\label{cone2}
\shift^{-1}(N[1]) \to N \otimes _A\shift T \to C(\omega_N) \to N[1]
\end{equation}
in $\K^{gr}(T)$, where the DG $B$-module $N$ is regarded as a tensor graded DG $T$-module that is concentrated in tensor degree zero.
In particular, for each integer $n$, taking the $n$-th tensor graded part, we have the following isomorphisms in $\K (B)$:
$$
 C(\omega_N) ^n \cong
 \begin{cases}
  N \otimes _A\shift T^n = N \otimes _A\shift^{n+1}(J^{\otimes _Bn})  &\ \text{if}\ n \geq 0\\
 N &\ \text{if}\ n= -1 \\
 0 &\ \text{if}\ n\leq -2.
 \end{cases}
 $$
\end{lem}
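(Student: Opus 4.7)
The plan is to apply the octahedral axiom to the factorization $\omega_N=[\varpi_N]\circ \omega_N^+$ given in~\ref{para20230115e} and then decompose the resulting triangle tensor-degree by tensor-degree. First I would identify the cones of the two factors in $\K^{gr}(T)$. Rotating the distinguished triangle of Proposition~\ref{triangles}(b) shows that the cone of $\omega_N^+\colon N\otimes_B T\to N\otimes_B T^+[1]$ is $\shift(N\otimes_A T)=N\otimes_A \shift T$. On the other hand, the short exact sequence $0\to T^+\to T\to B\to 0$ in $\C^{gr}(T)$ (with $B$ sitting in tensor degree $0$), combined with the fact that $N$ is flat over $B$ (being semifree), yields the short exact sequence $0\to N\otimes_B T^+\to N\otimes_B T\to N\to 0$. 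Shifting this by the tensor functor $[1]$ produces the triangle showing $\cone(\varpi_N)=N[1]$.

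Next I would invoke the octahedral axiom on $\omega_N=[\varpi_N]\circ \omega_N^+$ to produce the distinguished triangle
$$N\otimes_A \shift T \to C(\omega_N) \to N[1] \to \shift(N\otimes_A \shift T)$$
in $\K^{gr}(T)$. Rotating backward gives the desired triangle~\eqref{cone2}.

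To obtain the tensor-degree formulas I would take the $n$-th tensor graded part of~\eqref{cone2} term by term; since every morphism in $\K^{gr}(T)$ preserves the tensor grading, this produces an honest triangle in $\K(B)$. Using that $N$ is concentrated in tensor degree $0$ as a DG $T$-module, one computes $(N[1])^n$ to equal $N$ when $n=-1$ and to vanish otherwise, while $(\shift^{-1}(N[1]))^n$ equals $\shift^{-1}N$ when $n=-1$ and vanishes otherwise. Furthermore, writing $T^n=(\shift J)^{\otimes_B n}=\shift^n(J^{\otimes_B n})$, the term $(N\otimes_A \shift T)^n$ becomes $N\otimes_A \shift^{n+1}(J^{\otimes_B n})$ for $n\geq 0$ and vanishes for $n<0$. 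Splitting into the cases $n\geq 0$, $n=-1$, and $n\leq -2$ and reading off what a triangle with two zero terms forces $C(\omega_N)^n$ to be then produces the three isomorphisms claimed in the statement.

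The main obstacle I anticipate is the careful bookkeeping of the two distinct shift conventions $\shift$ (DG shift) and $[\,\cdot\,]$ (tensor shift), which appear simultaneously throughout the octahedral triangle and whose interaction must be tracked in every term; in particular one must be vigilant about the edge cases for the tensor graded parts of $\shift^{-1}(N[1])$ and $N\otimes_A \shift T$. Once these indices are sorted out, both the construction of~\eqref{cone2} and the tensor-degree analysis are essentially formal.
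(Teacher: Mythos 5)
Your proposal is correct and follows essentially the same route as the paper: both apply the octahedral axiom to the factorization $\omega_N=[\varpi_N]\circ\omega_N^+$, identifying the cones of the two factors from Proposition~\ref{triangles}(b) and the split sequence $0\to T^+\to T\to B\to 0$, and then read off the tensor-degree pieces. The paper simply records the full octahedral $4\times 4$ diagram, whereas you name the two cones first and then invoke the axiom, but the content is identical.
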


\begin{proof}
By the octahedron axiom, there exists a commutative diagram
$$
\xymatrix{ & N \otimes _B \shift T \ar@{=}[r] & N \otimes _B \shift T& \\
\shift^{-1}(N[1]) \ar[r] & N \otimes _A\shift T \ar[u] \ar[r] & C(\omega_N)  \ar[r] \ar[u] & N[1] \\
\shift^{-1}(N[1]) \ar@{=}[u]  \ar[r] & N \otimes _B T^+ [1] \ar[r]^{[\varpi_N]} \ar[u] & N \otimes _B T [1] \ar[u] \ar[r] &N[1] \ar@{=}[u] \\
&N \otimes _B T \ar[u]^-{\omega_N^+}   \ar@{=}[r] & N \otimes _B T \ar[u]^-{\omega_N} & }
$$
in which all rows and columns are triangles in $\K^{gr}(T)$ and the second row is~\eqref{cone2}.
The last assertion is obtained by taking the $n$-th tensor graded part in~\eqref{cone2} and noting that
$N^n= 0$ for all $n \not= 0$ and $N \otimes _A \shift T^n =0$ for all $n < 0$.
\end{proof}

\begin{prop}\label{cone vanishing}
If $\AR$ holds for a DG $B$-module $N$, then
$$
\Hom _{\K^{gr}(T)} (N \otimes _BT,  \shift^m (C(\omega_N)[n]))\!\cong\!
\begin{cases}
0&\!\!\!\!\!\!\!\!\!\!\!\!\!\!\!\!\!\!\!\!\!\text{if}\ n\geq 0, m \geq -2n, \text{or}\ n\leq -2\\
\Hom _{\K(B)}(N, \shift^m N)&\! \text{if}\ n= -1.
\end{cases}
$$
\end{prop}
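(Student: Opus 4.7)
My plan is to reduce the computation to one of $\Hom_{\K(B)}(N,-)$ on the tensor degree zero part via the adjointness in Lemma~\ref{lem20230116a}, and then plug in the explicit tensor-graded decomposition of $C(\omega_N)$ recorded in Lemma~\ref{cone}. The key observation is that for any tensor graded DG $T$-module $M$ and integers $m,n$, we have
$$
\bigl(\shift^m(M[n])\bigr)^0 \;=\; \shift^m\bigl(M^n\bigr),
$$
because $\shift^m$ acts only on the DG grading while $[n]$ only permutes the tensor grading. Taking $M = C(\omega_N)$ in Lemma~\ref{lem20230116a} therefore yields the natural bijection
$$
\Hom_{\K^{gr}(T)}\bigl(N\otimes_B T,\, \shift^m(C(\omega_N)[n])\bigr) \;\cong\; \Hom_{\K(B)}\bigl(N,\, \shift^m C(\omega_N)^n\bigr).
$$

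The second step is to substitute the case-by-case description of $C(\omega_N)^n$ from Lemma~\ref{cone}. When $n \leq -2$, that lemma gives $C(\omega_N)^n = 0$, so the Hom vanishes. When $n = -1$, Lemma~\ref{cone} gives $C(\omega_N)^{-1} \cong N$ in $\K(B)$, immediately producing $\Hom_{\K(B)}(N, \shift^m N)$ as claimed. When $n \geq 0$, Lemma~\ref{cone} gives $C(\omega_N)^n \cong N \otimes_A \shift^{n+1}(J^{\otimes_B n})$ in $\K(B)$, so we must check the vanishing of
$$
\Hom_{\K(B)}\bigl(N,\, N \otimes_A \shift^{m+n+1}(J^{\otimes_B n})\bigr)
$$
under the hypothesis $m \geq -2n$. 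This is exactly Lemma~\ref{vanishing}(a) applied with the DG shift-exponent $m+n+1$: the required bound $m+n+1 > -n$ is equivalent to $m \geq -2n$.

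I do not anticipate any substantive obstacle here; the argument is a clean combination of adjointness with the tensor-graded structure of $C(\omega_N)$. The only point requiring bookkeeping care is matching the extra shift $\shift^{n+1}$ appearing in the description of $C(\omega_N)^n$ with the inequality hypothesis of Lemma~\ref{vanishing}(a).
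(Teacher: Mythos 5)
Your proposal is correct and follows essentially the same route as the paper's proof: apply the adjointness isomorphism of Lemma~\ref{lem20230116a} to reduce to $\Hom_{\K(B)}(N, \shift^m C(\omega_N)^n)$, substitute the tensor-graded decomposition of $C(\omega_N)$ from Lemma~\ref{cone}, and conclude via Lemma~\ref{vanishing}. The bookkeeping you flag, identifying $\shift^m(N\otimes_A \shift T^n)$ with $N\otimes_A \shift^{m+n+1}(J^{\otimes_B n})$ and checking that $m+n+1 > -n$ is equivalent to $m\geq -2n$, matches the paper's computation exactly.
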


\begin{proof}
By Lemmas~\ref{lem20230116a} and~\ref{cone} we have
\begin{eqnarray*}
\Hom _{\K^{gr}(T)} (N \otimes _BT, \shift^m(C(\omega_N)[n]))\!\!\!\!
&\cong&\!\!\!\! \Hom _{\K(B)} (N, \shift^m(C(\omega_N)^n))  \\
&\cong&\!\!\!\!
\begin{cases}
\Hom _{\K(B)} (N, \shift^{m+1}(N \otimes _A T^n)) & \text{if}\ n \geq 0 \\
\Hom _{\K(B)} (N, \shift^m N) & \text{if}\ n=-1 \\
0 & \text{if}\ n \leq -2.
\end{cases}
\end{eqnarray*}
Now, the assertion follows from Lemma~\ref{vanishing}.
\end{proof}


\noindent \emph{Proof of Theorem~\ref{main}.}
Applying the functor $\Hom_{\K^{gr}(T)} (N \otimes _BT, \shift^m ((-)[n]))$ to the triangle~\eqref{cone triangle} we obtain an exact sequence
\begin{align*}
\Hom_{\K^{gr}(T)}(N\otimes_B T, \shift^{m-1}(C(\omega_N)[n])) & \to \Hom_{\K^{gr}(T)}(N\otimes_B T, \shift^m(N\otimes _BT[n]))\\ &\!\!\!\!\!\!\!\!\!\!\!\!\!\!\!\!\!\!\!\!\!\!\!\!\!\!\!\!\!\! \xra{\shift^m(\omega_N[n])\circ-}
\Hom_{\K^{gr}(T)}(N\otimes_B T, \shift^m(N\otimes _BT [n+1]))\\
&\to \Hom_{\K^{gr}(T)}(N\otimes_B T, \shift^m(C(\omega_N)[n]))
\end{align*}
of $R$-modules. By Proposition~\ref{cone vanishing}, if $n \geq 0$, then $\ker(\shift^m(\omega_N[n])\circ-)=0$ for all $m \geq 1$ and $\coker(\shift^m(\omega_N[n])\circ-)=0$ for all $m \geq 0$. Moreover, if $n\geq 1$, then $\ker(\shift^m(\omega_N[n])\circ-)=0$ and $\coker(\shift^m(\omega_N[n])\circ-)=0$ for all $m \geq 0$.
\qed
\vspace{6pt}

We conclude this section with Theorem~\ref{thm20230425a} in which in addition to $\AR$ we are considering another Ext vanishing condition on DG modules, defined next.

\begin{para}\label{para20230425a}
We say that $\Ar$ holds for a DG $B$-module $N$ if $\Hom _{\K(B)}(N, \shift^n N) = 0$ for all integers $n \geq 1$.
\end{para}

\begin{thm}\label{thm20230425a}
If $\AR$ and $\Ar$ hold for a DG $B$-module $N$, then
$$
{}^*\!\Hom _{\K^{gr}(T) } (N \otimes _B T, N \otimes _B \shift^m T)  =0
$$
for all integers $m\geq 1$.
\end{thm}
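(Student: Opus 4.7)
The plan is to view left composition with $\shift^m\omega_N$ as the transport mechanism that carries information from tensor degree $0$ to all positive tensor degrees, so that once I know the tensor-degree-$0$ summand of ${}^*\!\Hom$ vanishes, the entire graded Hom collapses; the tensor-degree-$0$ vanishing will come directly from $\Ar$, and the propagation is supplied by Theorem~\ref{main}.

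First I would unwind the definition
\[
{}^*\!\Hom _{\K^{gr}(T)} (N \otimes _B T, N \otimes _B \shift^m T)=\bigoplus_{n\in\mathbb{Z}}\Hom_{\K^{gr}(T)}\bigl(N\otimes_BT,\,\shift^m(N\otimes_BT[n])\bigr)
\]
and treat each summand separately. For $n<0$, the $0$-th tensor graded part of $\shift^m(N\otimes_BT[n])$ is $\shift^m(N\otimes_BT^n)=0$, since $T^n=0$ for $n<0$; hence Lemma~\ref{lem20230116a} gives vanishing. For $n=0$, the Adjointness Lemma~\ref{lem20230116a} identifies the summand with $\Hom_{\K(B)}(N,\shift^mN)$, using that $(N\otimes_BT)^0=N\otimes_BT^0=N$, and this is zero for $m\geq 1$ by hypothesis $\Ar$.

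The main case is $n\geq 1$ with $m\geq 1$. Here I would invoke the bijectivity clause of Theorem~\ref{main}: for $m\geq 1$ and every $k\geq 0$, left composition with $\shift^m(\omega_N[k])$ is a bijection
\[
\Hom_{\K^{gr}(T)}\bigl(N\otimes_BT,\shift^m(N\otimes_BT[k])\bigr)\xrightarrow{\,\cong\,}\Hom_{\K^{gr}(T)}\bigl(N\otimes_BT,\shift^m(N\otimes_BT[k+1])\bigr).
\]
Iterating these bijections from $k=0$ up to $k=n-1$ identifies the $n$-th summand with the $0$-th summand, which has just been shown to vanish. This handles all remaining $n\geq 1$, so every summand of ${}^*\!\Hom$ is zero, giving the theorem.

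I do not anticipate a serious technical obstacle, since the delicate Ext-vanishing work has already been absorbed into Theorem~\ref{main} through Proposition~\ref{cone vanishing} and the mapping-cone triangle~\eqref{cone triangle}. The role of $\Ar$ is minimal and confined to the single base case at tensor degree $0$; all the propagation in positive tensor degree is carried by Theorem~\ref{main}. The one point requiring a moment of care is the identification of the $0$-th tensor graded part $(N\otimes_B\shift^mT[n])^0=\shift^m(N\otimes_BT^n)$, which is where the conventions $T^0=B$ and $T^n=0$ for $n<0$ feed into the base and the negative-degree case, respectively.
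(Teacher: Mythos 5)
Your proof is correct and follows essentially the same route as the paper: use the Adjointness Lemma to identify the tensor-degree-$0$ summand with $\Hom_{\K(B)}(N,\shift^m N)$, kill it by $\Ar$, and then transport vanishing to all $n\geq 1$ via Theorem~\ref{main} (the paper phrases this as $\Gamma^n=\omega_N^n\cdot\Hom_{\K(B)}(N,\shift^mN)$, which is the same iteration you carry out with the bijections $\shift^m(\omega_N[k])\circ -$). The only cosmetic difference is that you spell out the $n<0$ case explicitly whereas the paper simply records that the graded module is non-negatively graded.
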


\begin{proof}
Recall that ${}^*\!\Hom _{\K^{gr}(T) } (N \otimes _B T, N \otimes _B \shift^m T)$ is a non-negatively graded module over the graded ring $\Gamma_{N\otimes _B T}$ with
$$
{}^*\!\Hom _{\K^{gr}(T) } (N \otimes _B T, N \otimes _B \shift^m T)^0\cong \Hom _{\K(B)}(N, \shift^m N)
$$
by Lemma~\ref{lem20230116a}.
Assuming $m\geq 1$, for an integer  $n\geq 1$, by Theorem~\ref{main} we have
$$
{}^*\!\Hom _{\K^{gr}(T) } (N \otimes _B T, N \otimes _B \shift^m T)^n=\omega_N ^n \cdot \Hom _{\K(B)}(N, \shift^m N).
$$
Now the assertion follows from the assumption that $\Ar$ holds for $N$.
\end{proof}

\section{The endomorphism rings}\label{sec20230425r}

In this section, for a semifree DG $B$-module $N$ that satisfies $\AR$, we study the relationship between the endomorphism rings $\End_{\K(B)}(N)$ and $\Gamma_{N\otimes_BT}$. Our main results in the section are Theorems~\ref{omega generates end} and~\ref{kernel of omega} which play an important role in Section~\ref{sec20230422a}.



Our first main result of this section, stated next, shows that if $\AR$ holds for a DG $B$-module $N$, then $\Gamma _{N \otimes _B T}$ is an $\End _{\K(B)}(N)$-algebra generated by $\omega_N$, and if $\omega_N\neq 0$, then it is a non-zero divisor on $\Gamma _{N \otimes _B T}^+=\bigoplus _{n\geq 1} \Gamma _{N \otimes _B T}^n$.

\begin{thm}\label{omega generates end}
If $\AR$ holds for a DG $B$-module $N$, then we have the equality
$$\Gamma _{N \otimes _B T}  = \End _{\K(B)}(N) [\omega_N]$$
of rings, that is, every element of $\Gamma _{N \otimes _B T}$ can be written as a polynomial of the form
$\alpha_0 + \omega_N\alpha_1+ \omega_N ^2 \alpha_2+ \cdots +  \omega_N ^n\alpha_n$, where $\alpha _i \in \End_{\K^{gr(T)}}(N\otimes_BT)\cong\End _{\K(B)}(N)$ for all $0\leq i\leq n$.
Moreover, the left multiplication by $\omega_N$ induces the bijections
$\Gamma _{N \otimes _B T} ^n \xra{\omega_N \cdot-}\Gamma _{N \otimes _B T}^{n+1}$ for all $n\geq 1$.
\end{thm}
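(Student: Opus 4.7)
The plan is to derive the theorem as a direct consequence of Theorem~\ref{main}, the adjointness Lemma~\ref{lem20230116a}, and the commutativity established in Corollary~\ref{cor20230117a}. The key point is that specializing Theorem~\ref{main} to the tensor-grading direction ($m=0$) gives us precisely the surjectivity/bijectivity of left multiplication by $\omega_N$ on the graded pieces of $\Gamma_{N\otimes_B T}$, and adjointness identifies the degree-zero piece with $\End_{\K(B)}(N)$.

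First, I would identify $\Gamma^0_{N\otimes_B T}$ with $\End_{\K(B)}(N)$. Applying Lemma~\ref{lem20230116a} with $M=N$ and with the tensor graded DG $T$-module $N\otimes_B T$ (whose $0$-th tensor graded part is $N$, since $T^0=B$), we obtain a natural bijection
$$\Gamma^0_{N\otimes_B T}=\Hom_{\K^{gr}(T)}(N\otimes_B T,N\otimes_B T)\xra{\cong}\Hom_{\K(B)}(N,N)=\End_{\K(B)}(N),$$
under which the endomorphism $f\otimes_B\id_T$ corresponds to $f$. Then, setting $m=0$ in Theorem~\ref{main}, the left multiplication
$$\omega_N\cdot-\colon \Gamma^n_{N\otimes_B T}\longrightarrow \Gamma^{n+1}_{N\otimes_B T}$$
is surjective for every $n\geq 0$ and bijective whenever $n\geq 1$. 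The bijectivity half is precisely the second assertion of the theorem.

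For the first assertion, I would prove by induction on $n\geq 0$ that every element of $\Gamma^n_{N\otimes_B T}$ has the form $\omega_N^n\cdot\alpha$ for some $\alpha\in\Gamma^0_{N\otimes_B T}$. The case $n=0$ is trivial, $n=1$ uses the surjectivity of $\omega_N\cdot-\colon\Gamma^0\to\Gamma^1$, and for $n\geq 2$ the surjectivity of $\omega_N\cdot-\colon\Gamma^{n-1}\to\Gamma^n$ combined with the inductive hypothesis yields the claim. Since $\Gamma_{N\otimes_B T}=\bigoplus_{n\geq 0}\Gamma^n_{N\otimes_B T}$ as a tensor graded ring, a general element is a finite sum of its homogeneous components, giving the desired polynomial expression $\alpha_0+\omega_N\alpha_1+\cdots+\omega_N^n\alpha_n$ with coefficients $\alpha_i$ in the image of $\End_{\K(B)}(N)$.

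To conclude the ring equality $\Gamma_{N\otimes_B T}=\End_{\K(B)}(N)[\omega_N]$, I would invoke Corollary~\ref{cor20230117a}: the identity $(f\otimes_B\id_{T[1]})\omega_N=\omega_N(f\otimes_B\id_T)$ shows that $\omega_N$ commutes (in the graded-ring sense of $\Gamma_{N\otimes_BT}$) with every element of the image of $\End_{\K(B)}(N)$, so the polynomial expressions produced above genuinely live in $\End_{\K(B)}(N)[\omega_N]$. There is no serious obstacle here since the work has essentially been done in the preceding results; the main care required is in correctly tracking the two shift functors $\shift$ and $[n]$ and in verifying that the adjointness identification $\Gamma^0\cong\End_{\K(B)}(N)$ intertwines composition in the graded ring with composition in $\End_{\K(B)}(N)$.
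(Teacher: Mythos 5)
Your proposal is correct and follows essentially the same route as the paper: identify $\Gamma^0_{N\otimes_B T}$ with $\End_{\K(B)}(N)$ via adjointness (the paper packages this as Proposition~\ref{cor to adjoint}, which is itself a consequence of Lemma~\ref{lem20230116a}), then specialize Theorem~\ref{main} to $m=0$ to get surjectivity for $n\geq 0$ and bijectivity for $n\geq 1$, and induct to write $\Gamma^n_{N\otimes_B T}=\omega_N^n\cdot\End_{\K(B)}(N)$. Your additional appeal to Corollary~\ref{cor20230117a} to justify the polynomial-ring notation is a reasonable elaboration that the paper leaves implicit.
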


The proof of Theorem~\ref{omega generates end} will be given after the following result, which is an immediate consequence of Lemma~\ref{lem20230116a}.

\begin{prop}\label{cor to adjoint}
Let $N \in \K (B)$.
Then, there is an isomorphism
$$
\Gamma _{N\otimes_BT} ^0 \cong \End _{\K(B)} (N)$$
and
$\Gamma ^n _{N \otimes _BT} =0$  for all integers  $n<0$.
In particular,  $\Gamma _{N \otimes _BT}$  is a non-negatively graded ring and we have
$$
\Gamma _{N \otimes _BT} \cong
\bigoplus _{n\geq 0} \Hom_{\K(B)}(N, N \otimes _B (\shift J)^{\otimes_B n})
= \bigoplus _{n\geq 0} \Ext _B^n (N, N \otimes _B J^{\otimes_B n}).
$$
\end{prop}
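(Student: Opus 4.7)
The claim is an immediate unwinding of the Adjointness Lemma~\ref{lem20230116a}, applied degree by degree. For each integer $n$, the tensor graded DG $T$-module $(N\otimes_B T)[n]$ has $i$-th tensor graded component $N\otimes_B T^{i+n}$, so its $0$-th tensor graded piece is $N\otimes_B T^n$. Taking the DG $B$-module in Lemma~\ref{lem20230116a} to be $N$ and the tensor graded DG $T$-module there to be $(N\otimes_B T)[n]$, we obtain a natural bijection
$$
\Gamma^n_{N\otimes_B T}=\Hom_{\K^{gr}(T)}\bigl(N\otimes_B T,(N\otimes_B T)[n]\bigr)\xra{\cong}\Hom_{\K(B)}(N, N\otimes_B T^n).
$$
Because $T=\bigoplus_{k\geq 0}(\shift J)^{\otimes_B k}$ is concentrated in non-negative tensor degrees, one has $T^n=0$ for $n<0$, which gives $\Gamma^n_{N\otimes_B T}=0$ in that range. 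For $n=0$ one has $T^0=B$, hence $N\otimes_B T^0\cong N$ and $\Gamma^0_{N\otimes_B T}\cong \End_{\K(B)}(N)$. Composition of morphisms respects the tensor grading, so $\Gamma_{N\otimes_B T}$ is then a non-negatively graded ring with degree-zero part $\End_{\K(B)}(N)$.

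To finish, I would combine the isomorphism above with the natural shift identification $(\shift J)^{\otimes_B n}\cong \shift^n(J^{\otimes_B n})$ that is obtained by iterating $\shift X\otimes_B Y\cong \shift(X\otimes_B Y)$ and $X\otimes_B\shift Y\cong \shift(X\otimes_B Y)$, producing
$$
\Hom_{\K(B)}\bigl(N, N\otimes_B(\shift J)^{\otimes_B n}\bigr)\cong \Hom_{\K(B)}\bigl(N,\shift^n(N\otimes_B J^{\otimes_B n})\bigr),
$$
and then invoke the fact (recorded in the background section) that $\Hom_{\K(B)}(N,\shift^n Z)\cong \Ext^n_B(N,Z)$ whenever $N$ is semifree over $B$, applied with $Z=N\otimes_B J^{\otimes_B n}$, to recover the second form of the stated isomorphism. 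The whole argument is bookkeeping, and I do not expect any serious obstacle; the one minor care point is tracking the Koszul signs appearing in the iterated shift identification so that the resulting isomorphism is genuinely compatible with the $T$-action inherited from both sides.
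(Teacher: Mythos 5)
Your proof is correct and follows exactly the route the paper intends: the paper states the proposition as ``an immediate consequence of Lemma~\ref{lem20230116a}'' without further detail, and your argument simply unwinds that adjointness degree by degree, using $T^n=0$ for $n<0$, $T^0=B$, and the shift identification $(\shift J)^{\otimes_B n}\cong\shift^n(J^{\otimes_B n})$ together with the definition of $\Ext$ for semifree $N$.
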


\noindent \emph{Proof of Theorem~\ref{omega generates end}.}
By Proposition~\ref{cor to adjoint} we know that $\Gamma _{N \otimes _B T}$ is a non-negatively graded ring whose $0$-th graded part is $\End _{\K(B)}(N)$. Also, recall from~\ref{para20230115e} that $\omega_N\in \Gamma^1_{N \otimes _B T}$.
By Theorem~\ref{main}, for all $n\geq 0$, the $n$-th graded part of $\Gamma _{N \otimes _B T}$ is of the form $\omega_N ^n \cdot \End _{\K(B)}(N)$.
Hence, we have the equality
\begin{equation}\label{direct decomp}
\Gamma _{N \otimes _B T} = \bigoplus _{n \geq 0} \omega_N ^n \cdot \End _{\K(B)}(N).
\end{equation}
The last statement of this result follows from Theorem~\ref{main} as well.
\qed


\begin{cor}\label{thm20230117a}
If $\AR$ holds for a DG $B$-module $N$, then $\omega_N$ lies in the center of the ring $\Gamma _{N \otimes _B T}$.
\end{cor}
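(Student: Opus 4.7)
The plan is to combine Theorem~\ref{omega generates end} with Corollary~\ref{cor20230117a}. By Theorem~\ref{omega generates end}, every element $\xi \in \Gamma_{N\otimes_B T}$ admits a decomposition
\begin{equation*}
\xi = \alpha_0 + \omega_N\alpha_1 + \omega_N^2\alpha_2 + \cdots + \omega_N^n\alpha_n,
\end{equation*}
where each $\alpha_i \in \Gamma^0_{N\otimes_B T}$. So to show $\omega_N$ is central in $\Gamma_{N\otimes_B T}$, it suffices to check that $\omega_N$ commutes with every element of $\Gamma^0_{N\otimes_B T}$ and with every power $\omega_N^k$ (the latter being automatic).

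First I would invoke the identification $\Gamma^0_{N\otimes_B T} \cong \End_{\K(B)}(N)$ provided by Proposition~\ref{cor to adjoint}. Under the adjointness bijection of Lemma~\ref{lem20230116a}, an element $f \in \End_{\K(B)}(N)$ corresponds to the class $[f\otimes_B \id_T]$ in $\Hom_{\K^{gr}(T)}(N\otimes_B T, N\otimes_B T) = \Gamma^0_{N\otimes_B T}$. Then Corollary~\ref{cor20230117a} gives precisely the identity
\begin{equation*}
(f\otimes_B \id_{T[1]})\,\omega_N \;=\; \omega_N\,(f\otimes_B \id_T)
\end{equation*}
in $\K^{gr}(T)$, which is exactly the statement that $\omega_N$ commutes with $[f\otimes_B \id_T]$ in $\Gamma_{N\otimes_B T}$.

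Putting these together, $\omega_N$ commutes with each $\alpha_i$, and trivially with each $\omega_N^i$, hence with every monomial $\omega_N^i \alpha_i$ appearing in the decomposition of $\xi$. Therefore $\omega_N \xi = \xi\,\omega_N$, proving that $\omega_N$ lies in the center of $\Gamma_{N\otimes_B T}$. There is no real obstacle here: the work has already been done in Corollary~\ref{cor20230117a} (commutation on generators of degree $0$) and Theorem~\ref{omega generates end} (the polynomial generation of $\Gamma_{N\otimes_B T}$ over its degree-zero part by $\omega_N$); the present statement is essentially a formal combination of the two.
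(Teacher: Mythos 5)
Your proposal is correct and takes essentially the same route as the paper: the paper's own proof is exactly the two-sentence combination of Theorem~\ref{omega generates end} (giving $\Gamma_{N\otimes_B T}=\End_{\K(B)}(N)[\omega_N]$) with Corollary~\ref{cor20230117a} (commutation with degree-zero elements). Your write-up merely spells out the adjointness identification $\Gamma^0_{N\otimes_B T}\cong\End_{\K(B)}(N)$ via Lemma~\ref{lem20230116a} and Proposition~\ref{cor to adjoint}, which the paper leaves implicit.
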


\begin{proof}
By Theorem~\ref{omega generates end}, we have $\Gamma _{N \otimes _B T}  = \End _{\K(B)}(N) [\omega_N]$. Now, the assertion follows from Corollary~\ref{cor20230117a}.
\end{proof}

The rest of this section is devoted to the second main result of this section, i.e., Theorem~\ref{kernel of omega}. In this theorem, for a DG $B$-module $N$ that satisfies $\AR$, we determine the kernel and cokernel of the map $\Gamma _{N \otimes _B T} \xra{\omega_N \cdot-} \Gamma _{N \otimes _B T}[1]$ induced by the left multiplication by $\omega_N$. For this purpose, we start with the following lemma.

\begin{lem}\label{lem20230118a}
Let $N$ be a DG $B$-module and $\p$ be the ideal of the ring $\End _{\K(B)}(N)$ consisting of all morphisms that factor through a finite direct sum of copies of $B$.
Considering $\p$ as a subset of the ring $\Gamma _{N\otimes _B T}$ via the natural inclusion map $\End _{\K(B)}(N) \hookrightarrow \Gamma _{N\otimes _B T}$ we have the equality $\omega_N \cdot \p =0$.
\end{lem}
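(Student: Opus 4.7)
The plan is to reduce the claim to the vanishing $\omega_{B^{\oplus n}}=0$ established in Proposition~\ref{cor20230124a}, using the functoriality of $\omega$ from Proposition~\ref{lem20230117a}. Since $\omega_N \cdot (-)$ is additive, it suffices to prove the assertion on each generator of $\p$, that is, on each morphism $\alpha\in\End_{\K(B)}(N)$ admitting a factorization
$$
N \xra{f} B^{\oplus n} \xra{g} N
$$
in $\K(B)$ for some positive integer $n$, with $\alpha = [gf]$.

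Under the inclusion $\End_{\K(B)}(N)\hookrightarrow \Gamma_{N\otimes_BT}$ supplied by Proposition~\ref{cor to adjoint}, the element $\alpha$ corresponds to $[gf\otimes_B\id_T] = [g\otimes_B\id_T]\circ[f\otimes_B\id_T]$ in $\Gamma^0_{N\otimes_BT}$. Therefore
$$
\omega_N \cdot \alpha \;=\; \omega_N \circ [g\otimes_B\id_T]\circ[f\otimes_B\id_T].
$$
The key step is to apply Proposition~\ref{lem20230117a} to the map $g\colon B^{\oplus n}\to N$, which produces the commutative square
$$
\omega_N \circ [g\otimes_B\id_T] \;=\; [g\otimes_B\id_{T[1]}] \circ \omega_{B^{\oplus n}}
$$
in $\K^{gr}(T)$. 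By Proposition~\ref{cor20230124a}, the right-hand side factor $\omega_{B^{\oplus n}}$ vanishes, so the displayed composition is zero, and consequently $\omega_N\cdot\alpha = 0$.

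This argument is essentially immediate once the two auxiliary results are in hand; there is no genuine obstacle. The only point that requires a brief verification is that $\p$ really is additively generated by morphisms of the above form (so that the one-generator calculation globalizes), which is clear because any two factorizations through $B^{\oplus n}$ and $B^{\oplus m}$ can be combined into a single factorization through $B^{\oplus(n+m)}$. Symmetrically, one could equally well apply Proposition~\ref{lem20230117a} to $f$ instead of $g$; either choice reduces the problem to $\omega_{B^{\oplus n}}=0$.
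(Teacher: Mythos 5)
Your proof is correct and follows essentially the same route as the paper: both arguments factor a morphism in $\p$ through $B^{\oplus n}$, invoke the functoriality of $\omega$ (Proposition~\ref{lem20230117a}) on one of the two legs, and then kill the result using $\omega_{B^{\oplus n}}=0$ (Proposition~\ref{cor20230124a}). The closing remark about additive generation is harmless but unnecessary, since by definition every element of $\p$ already admits a single factorization through some $B^{\oplus n}$.
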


\begin{proof}
Any morphism $f \in \p$ is, by definition,  a composition  $N \xra{h} B ^{\oplus n}\xra{g} N$ for some integer $n \geq 1$.
Thus, we obtain a commutative diagram
$$
\xymatrix{
N \otimes _BT \ar[rrrr]^-{f\otimes_B \id_T} \ar[rrd]_{h\otimes_B \id_T}  \ar[ddd]_{\omega_N} &&&& N \otimes _BT  \ar[ddd]^-{\omega_N} \\
&& B^{\oplus n} \otimes _B T\ar[rru]_{g \otimes_B \id_T}  \ar[ddd]^-(0.3){\omega_{B^{\oplus n}} }&& \\ \\
N \otimes _BT[1] \ar[rrrr]^(0.3){f\otimes_B \id_{T[1]}} \ar[rrd]_{h\otimes_B \id_{T[1]}}  &&&& N \otimes _BT[1] \\
&& B^{\oplus n} \otimes _B T[1]\ar[rru]_{g \otimes_B \id_{T[1]}}  & \\
}
$$
in which, by Proposition~\ref{cor20230124a}, we have $\omega _{B ^{\oplus n}} =0$.
Therefore, $\omega_N (f \otimes _B \id_T) =0$, which means that  $\omega_N \cdot \p = 0 $  in $\Gamma _{N\otimes _B T}$.
\end{proof}

\begin{cor}\label{cor20230118a}
If $\AR$ holds for a DG $B$-module $N$, then the ideal $\p$ introduced in Lemma~\ref{lem20230118a} is a graded (two-sided) ideal of the ring $\Gamma _{N\otimes _B T}$ that is concentrated in degree $0$.
\end{cor}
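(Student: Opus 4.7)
The plan is to combine three ingredients already available in the excerpt: Proposition~\ref{cor to adjoint}, which identifies $\End_{\K(B)}(N)$ with the degree-$0$ piece $\Gamma^0_{N\otimes_BT}$; Theorem~\ref{omega generates end}, which says every element of $\Gamma_{N\otimes_BT}$ is an $\End_{\K(B)}(N)$-polynomial in $\omega_N$; and Lemma~\ref{lem20230118a} together with Corollary~\ref{thm20230117a}, which say that $\omega_N\cdot\p = 0$ and that $\omega_N$ is central.

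First I would dispose of the grading claim: by construction, $\p$ is a subset of $\End_{\K(B)}(N)$ and hence, via the identification from Proposition~\ref{cor to adjoint}, lives entirely inside $\Gamma^0_{N\otimes_BT}$. Any subset contained in a single homogeneous component of a graded ring is automatically concentrated in that degree, and is in particular a graded subgroup. So both the grading and the concentration in degree $0$ are immediate once the ideal property is established.

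For the two-sided ideal property, pick an arbitrary $f\in\p$ and an arbitrary element $\gamma\in\Gamma_{N\otimes_BT}$. By Theorem~\ref{omega generates end} we may write $\gamma = \alpha_0 + \omega_N\alpha_1 + \omega_N^2\alpha_2 + \cdots + \omega_N^n\alpha_n$ with each $\alpha_i\in\End_{\K(B)}(N)$. Then
\[
\gamma\cdot f \;=\; \alpha_0 f \;+\; \sum_{i\geq 1}\omega_N^i\alpha_i f,
\]
and since $\p$ is a (two-sided) ideal of $\End_{\K(B)}(N)$, each $\alpha_i f$ lies in $\p$; Lemma~\ref{lem20230118a} then forces $\omega_N(\alpha_i f)=0$, so the higher terms vanish and $\gamma\cdot f = \alpha_0 f\in\p$. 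For multiplication on the other side, use Corollary~\ref{thm20230117a} to move $\omega_N$ past $f$:
\[
f\cdot\gamma \;=\; f\alpha_0 \;+\; \sum_{i\geq 1} f\omega_N^i\alpha_i \;=\; f\alpha_0 \;+\; \sum_{i\geq 1} \omega_N^i (f\alpha_i),
\]
and again Lemma~\ref{lem20230118a} kills the $i\geq 1$ terms since $f\alpha_i\in\p$, leaving $f\cdot\gamma = f\alpha_0\in\p$. This shows $\p$ is a two-sided ideal of $\Gamma_{N\otimes_BT}$.

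There is no real obstacle here; the statement is essentially a formal consequence of the three prior results, and the only small subtlety is remembering to invoke centrality of $\omega_N$ on the right-multiplication side so that one can still apply $\omega_N\cdot\p=0$ after reordering factors.
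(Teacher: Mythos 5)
Your argument is correct and runs along the same lines as the paper's own (very terse) proof, which just cites Theorem~\ref{omega generates end} and then asserts that $\p$ is an ideal of $\Gamma_{N\otimes_B T}$. What you add is exactly the two implicit facts that make that assertion legitimate: that $\omega_N\cdot\p=0$ from Lemma~\ref{lem20230118a}, and that $\omega_N$ is central (Corollary~\ref{thm20230117a}) so the same vanishing controls right multiplication as well.
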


\begin{proof}
By Theorem~\ref{omega generates end}, the ring $\Gamma _{N\otimes _B T}$ is generated by $\omega_N$ as an $\End _{\K(B)}(N)$-algebra. Hence, the ideal $\p$ of $\End _{\K(B)}(N)$ is an ideal of $\Gamma _{N\otimes _B T}$ as well.
\end{proof}

\begin{para}\label{par20230117a}
Assume that $\AR$ holds for a DG $B$-module $N$. By Theorem~\ref{omega generates end}, each morphism $\Gamma _{N \otimes _B T} ^n \xra{\omega_N \cdot-}\Gamma _{N \otimes _B T}^{n+1}$ with $n \geq 1$ is an isomorphism. On the other hand, by Theorem~\ref{main} the map $\Gamma _{N \otimes _B T}^{0} \xra{\omega_N \cdot-} \Gamma _{N \otimes _B T}^{1}$ is surjective, but it is not injective in general. Therefore, it has a non-trivial kernel.
By Lemma~\ref{lem20230116a}, note that the map $\Gamma _{N \otimes _B T}^{0} \xra{\omega_N \cdot-} \Gamma _{N \otimes _B T}^{1}$ is in fact the map
$$
\End _{\K(B)} (N) \to \Hom _{\K(B)}(N, N\otimes_B\shift J)
$$
which is induced by applying the functor $\Hom _{\K(B)} (N , -)$ to $\omega_N\colon N \to N \otimes _B \shift J$. To be precise, the morphism $\omega_N\colon N \to N \otimes _B \shift J$, which we are considering here, is in fact the restricted morphism $\omega_N|_N$, for which we use the same notation $\omega_N$.
Since there is a triangle  $N \otimes _B J \to N \otimes _A B \to N \xra{\omega_N} N \otimes _B \shift J$ in $\K(B)$ which is obtained from the short exact sequence~\eqref{basic sequence}, we have $$\ker(\Gamma _{N \otimes _B T}^{0} \xra{\omega_N \cdot-} \Gamma _{N \otimes _B T}^{1})=\im\left(\Hom _{\K(B)}(N, N \otimes _A B) \to \End _{\K(B)}(N)\right).$$
\end{para}

\begin{lem}\label{lem20230118b}
If $\AR$ holds for a DG $B$-module $N$, then for the ideal $\p$ introduced in Lemma~\ref{lem20230118a}
we have a short exact sequence
$$
0 \to \p \to \End _{\K(B)}(N) \xra{\Hom_{\K(B)}(N, \omega_N)} \Hom _{\K(B)}(N, N\otimes_B \shift J) \to 0.
$$
\end{lem}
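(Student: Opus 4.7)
The plan is to begin from the triangle $N \otimes_B J \to N \otimes_A B \xra{\mu} N \xra{\omega_N} N \otimes_B \shift J$ in $\K(B)$ identified in~\ref{par20230117a}, where $\mu$ denotes multiplication. Applying $\Hom_{\K(B)}(N,-)$ yields a long exact sequence in which the kernel of $\Hom_{\K(B)}(N,\omega_N)$ is precisely the image of $\mu_{*}\colon \Hom_{\K(B)}(N, N\otimes_A B)\to \End_{\K(B)}(N)$. Surjectivity on the right of the claimed short exact sequence follows from Theorem~\ref{main} applied with $m=n=0$, after translation through Proposition~\ref{cor to adjoint} and Lemma~\ref{lem20230116a}. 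The proof therefore reduces to showing $\im(\mu_{*}) = \p$.

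For the containment $\p \subseteq \im(\mu_{*})$, I would take $f \in \p$, write it as a composition $N \xra{h} B^{\oplus n} \xra{g} N$ in $\K(B)$, and lift $g$ along $\mu$. The obstruction sits in $\Hom_{\K(B)}(B^{\oplus n}, N\otimes_B \shift J) \cong H_{-1}(N\otimes_B J)^{n}$, which vanishes because $N$ is non-negatively graded by $\AR$\eqref{AR-1-1} and $\inf J > 0$. Any lift $\tilde g\colon B^{\oplus n}\to N\otimes_A B$ then gives $\mu(\tilde g h) = f$.

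For the reverse containment $\im(\mu_{*}) \subseteq \p$, I would use $\AR$\eqref{AR-1-2} to fix a finite filtration $0=F_0\subseteq F_1\subseteq \cdots \subseteq F_i\simeq N$ of DG $A$-submodules with $F_k/F_{k-1}$ a finite direct sum of copies of $\shift^{r_{k,j}}A$, $r_{k,j}\geq 0$. Tensoring over $A$ with $B$ produces a filtration $P_k = F_k\otimes_A B$ of $N\otimes_A B$ with quotients $\bigoplus_j \shift^{r_{k,j}} B$. The key claim, proven by induction on $k$, is that $\mu|_{P_k}\circ \psi \in \p$ for every $\psi\colon N\to P_k$. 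In the inductive step, $\AR$\eqref{AR-1-3} forces the composition $q_k\psi\colon N\to P_k/P_{k-1}$ to factor as $\iota_0\sigma$ through the $r_{k,j}=0$ subsummand $B^{\oplus n_0}$, and a lift $\tilde\iota_0\colon B^{\oplus n_0}\to P_k$ of $\iota_0$ exists because the obstruction lies in $\Hom_{\K(B)}(B^{\oplus n_0}, \shift P_{k-1}) = H_{-1}(P_{k-1})^{n_0}$, which vanishes since $P_{k-1}$ is non-negatively graded. Writing $\psi = \tilde\iota_0\sigma + i_k\psi'$ with $\psi'\colon N\to P_{k-1}$, the decomposition $\mu|_{P_k}\psi = (\mu|_{P_k}\tilde\iota_0)\sigma + \mu|_{P_{k-1}}\psi'$ then puts the first summand in $\p$ by construction and the second in $\p$ by induction. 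The main obstacle will be arranging this dévissage cleanly, since it simultaneously uses $\AR$\eqref{AR-1-3} to trim the components of $q_k\psi$ landing in positive shifts and the non-negativity of $P_{k-1}$ to lift $\iota_0$; an error in either would break the induction.
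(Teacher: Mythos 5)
Your proposal is correct, and its skeleton -- the triangle from~\ref{par20230117a}, the identification $\ker\bigl(\Hom_{\K(B)}(N,\omega_N)\bigr)=\im(\mu_*)$, and a finite-filtration dévissage driven by $\AR$\eqref{AR-1-3} -- matches the paper's. You also do the paper a small favor by explicitly justifying surjectivity via Theorem~\ref{main}, which the paper leaves implicit. Where you diverge: for $\p\subseteq\ker$, the paper simply invokes Lemma~\ref{lem20230118a} (i.e., $\omega_N\cdot\p=0$, which follows from $\omega_{B^{\oplus n}}=0$), whereas you re-derive the containment from scratch by lifting $g$ across $\mu$ and killing the obstruction in $H_{-1}(N\otimes_B J)$; both are valid, but citing the lemma is shorter and re-uses machinery the paper has already set up. For the reverse containment, the paper passes directly to a filtration of the perfect DG $B$-module $N\otimes_A B$ arranged so that $F_1$ is the unshifted copies of $B$ and all higher quotients are positive shifts; then $\AR$\eqref{AR-1-3} immediately gives surjectivity of $\Hom_{\K(B)}(N,F_1)\to\Hom_{\K(B)}(N,N\otimes_A B)$ and the factorization $f$ through $F_1\cong B^{\oplus n}$. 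Your version keeps the unsorted filtration $P_k=F_k\otimes_A B$ coming from $N$, so each inductive step must both trim the positively shifted components of $q_k\psi$ (via $\AR$\eqref{AR-1-3}) and lift the degree-$0$ summand back into $P_k$ (using $H_{-1}(P_{k-1})=0$). This is correct but strictly heavier; reorganizing the filtration by DG degree first, as in the paper, collapses the whole induction into one surjectivity statement.
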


\begin{proof}
The map $\Hom_{\K(B)}(N, \omega_N)$ is the multiplication (i.e., composition) by $\omega_N$ from the left. Hence, by Lemma~\ref{lem20230118a} we have $\p \subseteq \ker(\Hom_{\K(B)}(N, \omega_N))$.

For the reverse containment, let $f \in \ker(\Hom_{\K(B)}(N, \omega_N))$ be an arbitrary element.
By~\ref{par20230117a}, we have $f\in \im\left(\Hom _{\K(B)}(N, N \otimes _A B) \to \End _{\K(B)}(N)\right)$.
Note that $N\otimes _AB$ is a perfect DG $B$-module, and hence, it has a finite filtration
$$
0=F_{0} \subseteq F_1 \subseteq \cdots \subseteq F_n \simeq N\otimes _A B
$$
such that each $F_k /F_{k-1}$ is a direct sum of copies of $\shift^{r_k}B$ with $r_k\geq k-1$.
Since  $\Hom _{\K(B)} (N, \shift^i B)=0$ for all $i\geq 1$, we see that the natural inclusion $F_1 \hookrightarrow N \otimes _A B$ induces a surjective map $\Hom _{\K(B)} (N, F_1) \to \Hom _{\K(B)} (N, N \otimes_A B)$.
Hence, we conclude that  $f$ factors through  $F_1$ that is a finite direct sum of copies of $B$. Thus, $f\in \p$ and therefore, we have the equality $\p=\ker(\Hom_{\K(B)}(N, \omega_N))$.
\end{proof}

The following result is our second main theorem of this section that follows from Corollary~\ref{cor20230118a} and Lemma~\ref{lem20230118b}.

\begin{thm}\label{kernel of omega}
Assume that $\AR$ holds for a DG $B$-module $N$, and let $\p$ be the ideal of $\End _{\K(B)}(N)$ introduced in Lemma~\ref{lem20230118a}. Then, there is an exact sequence
$$
0 \to \p \to \Gamma _{N \otimes _B T} \xra{\omega_N \cdot -} \Gamma _{N \otimes _B T}[1] \to \End _{\K(B)} (N)[1] \to 0
$$
of graded $\Gamma _{N \otimes _B T} $-modules in which $\p$ is regarded as a graded ideal of the ring $\Gamma _{N \otimes _B T}$ concentrated in degree $0$.
\end{thm}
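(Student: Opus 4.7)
The plan is to assemble the desired exact sequence by analyzing the map $\omega_N \cdot - \colon \Gamma _{N \otimes _B T} \to \Gamma _{N \otimes _B T}[1]$ degree by degree, using Theorem~\ref{omega generates end} and Lemma~\ref{lem20230118b}. By Proposition~\ref{cor to adjoint} the graded ring $\Gamma _{N \otimes _B T}$ is non-negatively graded, so $\Gamma _{N \otimes _B T}[1]$ lives in degrees $\geq -1$, with its degree $-1$ part equal to $\Gamma _{N \otimes _B T}^0 \cong \End _{\K(B)}(N)$.

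First, I would observe that in tensor degree $n \geq 1$, Theorem~\ref{omega generates end} tells us that the left multiplication $\Gamma _{N \otimes _B T}^n \xra{\omega_N \cdot -} \Gamma _{N \otimes _B T}^{n+1}$ is a bijection, so both the kernel and cokernel of $\omega_N \cdot -$ vanish in these degrees. In tensor degree $0$, the map $\Gamma _{N \otimes _B T}^0 \to \Gamma _{N \otimes _B T}^1$ is identified via Lemma~\ref{lem20230116a} with $\Hom_{\K(B)}(N,\omega_N)\colon \End _{\K(B)}(N) \to \Hom _{\K(B)}(N, N\otimes_B \shift J)$, which by Lemma~\ref{lem20230118b} fits into the short exact sequence
$$
0 \to \p \to \End _{\K(B)}(N) \xra{\Hom_{\K(B)}(N,\omega_N)} \Hom _{\K(B)}(N, N\otimes_B \shift J) \to 0.
$$
So the kernel is $\p$ in degree $0$ and the map is surjective in degree $0$. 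Finally, in the remaining degree $-1$ of $\Gamma _{N \otimes _B T}[1]$, there is no source (since $\Gamma _{N \otimes _B T}^{-1}=0$), so the cokernel is $(\Gamma _{N \otimes _B T}[1])^{-1} = \End _{\K(B)}(N)$, which is exactly $\End _{\K(B)}(N)[1]$ as a graded module concentrated in degree $-1$.

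Combining these degree-wise computations yields the claimed four-term exact sequence. It remains to verify the $\Gamma _{N \otimes _B T}$-module structure on the end terms: for $\p$, Corollary~\ref{cor20230118a} says $\p$ is a graded two-sided ideal of $\Gamma _{N \otimes _B T}$ concentrated in degree $0$; for $\End _{\K(B)}(N)[1]$, since it arises as the cokernel of a $\Gamma _{N \otimes _B T}$-module map, it inherits a graded $\Gamma _{N \otimes _B T}$-module structure automatically. No step here is really an obstacle — the heavy lifting has been done in Theorems~\ref{main} and~\ref{omega generates end} and in Lemma~\ref{lem20230118b}; the only bookkeeping one must not miss is correctly accounting for the shift $[1]$, so that the cokernel in tensor degree $-1$ of $\Gamma _{N \otimes _B T}[1]$ is precisely $\End _{\K(B)}(N)[1]$ and not a copy of it sitting in the wrong degree.
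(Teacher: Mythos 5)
Your proof is correct and follows essentially the same approach as the paper's, which gives no detailed argument beyond citing Corollary~\ref{cor20230118a} and Lemma~\ref{lem20230118b} (the degree-by-degree analysis you carry out, including the bijectivity in degrees $\geq 1$ coming from Theorem~\ref{omega generates end} and the surjectivity in degree $0$, is exactly what the paper prepares in~\ref{par20230117a}). Your careful bookkeeping of the shift $[1]$, placing the cokernel in tensor degree $-1$, is the right way to see the term $\End_{\K(B)}(N)[1]$.
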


\section{Na\"{\i}ve liftings}\label{sec20230422a}

This section is devoted to the proof of Theorem~\ref{equivalent conditions}, which is our main result in this paper, and its application, namely, Corollary~\ref{cor20230807a}. The proof of this theorem uses our entire work from the previous sections. We start with reminding the reader of the definition of na\"{\i}ve liftability which was first introduced in~\cite{NOY} and further studied in~\cite{NOY1, NOY3, NOY2}.

\begin{para}\label{defn20230127a}
Let $N$ be a semifree DG $B$-module and $N|_A$ denote $N$ regarded as a DG $A$-module via the DG $R$-algebra homomorphism $\varphi\colon A\to B$. We say $N$ is {\it na\"ively liftable to $A$} if
the map  $\pi _N\colon N|_A\otimes_AB\to N$ defined by $\pi_N(n\otimes_Ab)=nb$ is a split DG $B$-module epimorphism. Equivalently, $N$  is na\"ively liftable to $A$ if $\pi_N$ has a right inverse in the abelian category $\C(B)$.
\end{para}

\begin{para}
It is worth mentioning that if $B=A\langle X\rangle$ is a simple free extension of DG algebras, then na\"ively liftability is equivalent to the classical notions of lifting for bounded below semifree DG $B$-modules; see~\cite[Theorem 6.8]{NOY}.
\end{para}

\begin{thm}\label{equivalent conditions}
Assume that $\AR$ holds for a DG $B$-module $N$. Then, the following conditions are equivalent:
\begin{enumerate}[\rm(i)]
\item
$N$ is na\"{\i}vely liftable to $A$;
\item
$\omega_N=0$ as an element of the ring $\Gamma_{N\otimes _B T}$;
\item
$\omega_N$ is nilpotent in $\Gamma_{N\otimes _B T}$, i.e., there exists an integer $n\geq 1$ such that $\omega_N ^n =0$;
\item
The natural inclusion $\End_{\K(B)}(N) \hookrightarrow \Gamma_{N\otimes _B T}$ is an isomorphism;
\item
The ring $\Gamma_{N\otimes _B T}$ is finitely generated as a (right) $\End_{\K(B)}(N)$-module;
\item
$\Gamma_{N\otimes _B T}^i = \Hom _{\K(B)}(N, N \otimes _B T^i)=0$ for all integers $i\geq 1$;
\item
$\Gamma_{N\otimes _B T}^i = \Hom _{\K(B)}(N, N \otimes _B T^i)=0$ for some integer $i\geq 1$;
\item
$\Hom _{\K(B)}(N, N \otimes _B \shift J)=0$;
\item
$N$ is a direct summand of a finite direct sum of copies of $B$ in $\D (B)$.
\end{enumerate}
\end{thm}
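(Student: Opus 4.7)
The overall strategy is to establish a cycle of implications in three stages. The ring-theoretic equivalences among conditions (ii)--(viii) will flow directly from the polynomial-ring structure of $\Gamma_{N\otimes_B T}$ given by Theorem~\ref{omega generates end}; the bridge (i) $\Leftrightarrow$ (ii) will come from the tensor-degree-zero piece of the triangle of Proposition~\ref{triangles} combined with the adjunction of Lemma~\ref{lem20230116a}; and (ix) will be woven in via a filtration-peeling argument that uses $\AR$(ii) and $\AR$(iii).

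For the cluster (ii) $\Leftrightarrow$ (iv) $\Leftrightarrow$ (vi) $\Leftrightarrow$ (vii) $\Leftrightarrow$ (viii), I would work with the decomposition $\Gamma_{N\otimes_B T}=\bigoplus_{n\geq 0}\omega_N^n\cdot\End_{\K(B)}(N)$: each of these conditions says exactly that the positive tensor-graded part of $\Gamma_{N\otimes_B T}$ vanishes, and the bijectivity of $\omega_N\cdot-\colon\Gamma_{N\otimes_B T}^n\to \Gamma_{N\otimes_B T}^{n+1}$ for $n\geq 1$ propagates any such vanishing both up and down the grading. The implication (iii) $\Rightarrow$ (ii) I would handle by reverse induction, writing $\omega_N^n=\omega_N\cdot\omega_N^{n-1}$ and invoking injectivity of $\omega_N\cdot-$ in positive degrees; and for (v) $\Rightarrow$ (iii) I would use a pigeonhole on the grading, namely that if generators of $\Gamma_{N\otimes_B T}$ over $\End_{\K(B)}(N)$ have tensor-grading components in degrees $\leq M$, then $\omega_N^{M+1}$ is forced to lie in $\bigoplus_{i\leq M}\Gamma_{N\otimes_B T}^i$, whence $\omega_N^{M+1}=0$ by directness of the sum.

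For the bridge (i) $\Leftrightarrow$ (ii), I would extract the tensor-degree-zero piece of the short exact sequence~\eqref{eq20130118a}, namely $0\to N\otimes_B J\to N\otimes_A B\xra{\pi_N}N\to 0$ in $\C(B)$; by Lemma~\ref{lem20231015a} the connecting morphism of the induced triangle in $\K(B)$ is the restriction of $\w_N$ to tensor degree zero, and by Lemma~\ref{lem20230116a} this restriction vanishes in $\K(B)$ precisely when $\omega_N=0$ in $\Gamma_{N\otimes_B T}$. Thus $\omega_N=0$ is equivalent to $\pi_N$ splitting in $\K(B)$, and the standard upgrading argument for semifree modules (cf.~\cite{NOY,NOY1,NOY3}) converts this into splitting in $\C(B)$, i.e., naïve liftability.

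What remains is to fit (ix) into the cycle, and this is where I expect the main obstacle. The easy direction (ix) $\Rightarrow$ (viii) follows by observing that $\Hom_{\K(B)}(B,\shift J)\cong H_{-1}(J)=0$ (since $\inf J>0$) and that this vanishing descends to direct summands of $B^{\oplus m}$. For (i) $\Rightarrow$ (ix), naïve liftability presents $N$ as a $\C(B)$-direct summand of $N|_A\otimes_A B$; I would invoke $\AR$(ii) to replace $N|_A$ up to quasiisomorphism by a finite semifree DG $A$-module whose basis sits in degrees $\geq 0$, obtaining a filtration $0=F_0\subseteq F_1\subseteq\cdots\subseteq F_n\simeq N|_A\otimes_A B$ whose graded quotients have the form $\shift^{r_k}B^{\oplus d_k}$ with $r_k\geq 0$. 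After reordering the basis by non-decreasing degree so that layers with $r_k\geq 1$ sit on top, $\AR$(iii) gives $\Hom_{\K(B)}(N,\shift^{r_k}B^{\oplus d_k})=0$ for each such layer; the long exact sequence obtained by applying $\Hom_{\K(B)}(N,-)$ then lifts any section $N\to F_k$ in $\K(B)$ to a section $N\to F_{k-1}$. Peeling off all positively-shifted layers leaves an iterated extension of unshifted copies of $B$, which, since $\Ext^1_B(B,B)=H_{-1}(B)=0$, splits in $\D(B)$ as $B^{\oplus m}$; hence $N$ is a direct summand of $B^{\oplus m}$ in $\D(B)$. The delicate point will be ensuring that the reordering respects the semifree condition (which it does because each $b_{\mu\lambda}$ in $\partial^N(e_\lambda)=\sum e_\mu b_{\mu\lambda}$ has non-negative degree) and that the residual extension is indeed trivial in $\D(B)$.
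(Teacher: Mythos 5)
Your proposal is correct in outline and broadly mirrors the paper's strategy, but it takes two genuinely different routes where the paper relies on previously established lemmas. For the bridge (i)~$\Leftrightarrow$~(ii), the paper's Lemma~\ref{omega+} works entirely at the level of $\C^{gr}(T)$ and $\K^{gr}(T)$, showing that na\"{\i}ve liftability is equivalent to the splitting of~\eqref{eq20130118a} in $\C^{gr}(T)$, then to $\omega_N^+=0$, then to $\omega_N=0$ (the last implication via a factoring-through-$\shift^{-1}N[1]$ argument). You instead extract the tensor-degree-zero piece $0\to N\otimes_B J\to N\otimes_A B\to N\to 0$ and work directly in $\K(B)$ with the adjunction; this is valid, provided you make explicit that the sequence is degree-wise split as graded $B$-modules (because $N$ is semifree) so that vanishing of the obstruction class in $\K(B)$ does yield a splitting in $\C(B)$ — the same principle the paper invokes for the $\C^{gr}(T)$-splitting. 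For condition (ix), the paper already has Theorem~\ref{kernel of omega} in hand, whose proof (via Lemma~\ref{lem20230118b}) contains exactly the filtration-peeling computation you re-derive from scratch; with that theorem the implication becomes the one-line chain $\omega_N=0\Leftrightarrow\p=\Gamma_{N\otimes_B T}\Leftrightarrow\id_N\in\p$, where $\p$ is the ideal of endomorphisms of $N$ factoring through finite sums of copies of $B$. Your inlined peeling argument does work (the lift of the section along the perfect filtration, the degree-wise splitness of each layer, and $\HH_{-1}(B)=0$ for the final collapse are all fine), but it reproves a result the paper has already packaged. One small imprecision: in (v)~$\Rightarrow$~(iii) the cleaner way to say it is that since $\End_{\K(B)}(N)=\Gamma^0_{N\otimes_BT}$ is concentrated in tensor degree~$0$, a finite homogeneous generating set in degrees $\leq M$ forces $\Gamma^{M+1}_{N\otimes_BT}=0$, hence $\omega_N^{M+1}=0$.
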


The proof of Theorem~\ref{equivalent conditions} is given after the following lemma.

\begin{lem}\label{omega+}
Let $N$  be a semifree DG $B$-module.
Then, the following conditions are equivalent:
\begin{enumerate}[\rm(i)]
\item  $N$  is na\"{\i}vely liftable to $A$;
\item The short exact sequence~\eqref{eq20130118a} is split in $\C^{gr}(T)$;
\item  $\omega_N ^+ =0$  in $\K^{gr}(T)$;
\item  $\omega_N =0$ in $\K^{gr}(T)$.
\end{enumerate}
\end{lem}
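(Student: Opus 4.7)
The plan is to close the cycle (i) $\Leftrightarrow$ (ii) $\Rightarrow$ (iii) $\Leftrightarrow$ (iv) $\Rightarrow$ (i).

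For (i) $\Leftrightarrow$ (ii), I would apply the adjointness of Lemma~\ref{lem20230116a} with the tensor graded DG $T$-module $N\otimes_A T$ in the role of $N$. The bijection $\zeta$ identifies $\Hom_{\C(B)}(N,N\otimes_A B)$ with $\Hom_{\C^{gr}(T)}(N\otimes_B T,N\otimes_A T)$, and since $\zeta$ is natural with respect to composition with $\theta_N$ (whose zeroth tensor graded part is $\pi_N$) and $\zeta(\id_N)=\id_{N\otimes_BT}$, the equation $\pi_N s=\id_N$ holds in $\C(B)$ exactly when $\theta_N\tilde s=\id_{N\otimes_BT}$ holds in $\C^{gr}(T)$; in other words, $\pi_N$ is split in $\C(B)$ precisely when \eqref{eq20130118a} is split in $\C^{gr}(T)$.

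The implication (ii) $\Rightarrow$ (iii) is immediate, since a $\C^{gr}(T)$-splitting of~\eqref{eq20130118a} descends to a split triangle in $\K^{gr}(T)$, forcing the connecting morphism $\omega_N^+$ of Proposition~\ref{triangles}\eqref{triangles2} to vanish. For the equivalence (iii) $\Leftrightarrow$ (iv), I would exploit the short exact sequence $0\to T^+[1]\xra{\varpi}T[1]\to Q\to 0$ in $\C^{gr}(T)$, where $Q:=T[1]/T^+[1]$ is concentrated in tensor degree $-1$ and equal there to $B$. Tensoring on the left with $N$ (which is exact since $N$ is semifree over $B$) and applying $\Hom_{\K^{gr}(T)}(N\otimes_BT,-)$ to the induced triangle gives a long exact sequence. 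Lemma~\ref{lem20230116a} computes $\Hom_{\K^{gr}(T)}(N\otimes_BT,N\otimes_B Q)\cong\Hom_{\K(B)}(N,(N\otimes_B Q)^0)=0$ (and similarly for $\shift^{-1}(N\otimes_B Q)$), because $(N\otimes_B Q)^0=0$. Hence the map induced by $\varpi_N\circ-$ is an isomorphism on the pertinent Hom sets, and as $\omega_N=[\varpi_N]\circ\omega_N^+$ by the construction in~\ref{para20230115e}, the equivalence follows.

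The main obstacle is (iii) $\Rightarrow$ (i), where a homotopy-level splitting must be upgraded to a strict DG splitting. Assuming $\omega_N^+=0$ in $\K^{gr}(T)$, the triangle of Proposition~\ref{triangles}\eqref{triangles2} splits in $\K^{gr}(T)$, so there is $\tilde s\in\Hom_{\C^{gr}(T)}(N\otimes_BT,N\otimes_AT)$ with $\theta_N\tilde s\sim \id$; via the $\K$-level adjointness, the corresponding $s\in\Hom_{\C(B)}(N,N\otimes_A B)$ satisfies $\pi_Ns-\id_N=\partial^N h+h\partial^N$ for some graded right $B$-module map $h\colon N\to \shift^{-1}N$. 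The key observation is that $\pi_N$ admits a graded right $B$-module section (not a DG map) $s_0\colon N\to N\otimes_A B$: writing $N$ in terms of a semifree basis $\{e_\lambda\}_{\lambda\in\Lambda}$ over $B$, the $B$-linear formula $s_0\!\left(\sum_\lambda e_\lambda b_\lambda\right):=\sum_\lambda e_\lambda\otimes_A b_\lambda$ defines such a section, because $N$ is free as a graded right $B$-module. Lifting $h$ via $h':=s_0\circ h\colon N\to\shift^{-1}(N\otimes_A B)$, so that $\pi_N h'=h$, I would then set $s':=s-\partial^{N\otimes_AB}h'-h'\partial^N$. Using $\partial^2=0$ together with the fact that $s$ is DG, a direct calculation shows $s'$ remains a DG $B$-module homomorphism, while $\pi_Ns'=\pi_Ns-\partial h-h\partial=\id_N$ provides the required strict splitting in $\C(B)$.
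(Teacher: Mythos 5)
Your proposal is correct and follows essentially the same approach as the paper: the adjointness of Lemma~\ref{lem20230116a}, the identity $(\theta_N)^0=\pi_N$, and the vanishing $\Hom_{\K^{gr}(T)}(N\otimes_BT,\shift^j(N[1]))=0$ all play the same roles as in the paper's proof. The only noticeable difference is in (iii)$\implies$(i): you explicitly upgrade the homotopy-level splitting to a strict one by correcting $s$ with $\partial h'+h'\partial$, where $h'=s_0h$ lifts the homotopy through a graded $B$-linear section $s_0$ of $\pi_N$ (which exists since $N$ is free as a graded $B$-module), whereas the paper routes (iii) through (ii) by treating as implicit the standard fact that a degreewise-split short exact sequence of DG modules splits in $\C^{gr}(T)$ if and only if the connecting morphism $\omega_N^+$ vanishes in $\K^{gr}(T)$; your version simply makes that tacit step concrete, and the computation showing $s'$ remains a $B$-linear chain map with $\pi_Ns'=\id_N$ is correct.
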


\begin{proof}
The implication (i)$\implies$(ii) follows from the fact that $\theta_N=\pi_N\otimes_B\id_T$ in~\eqref{eq20130118a}. The implications (ii) $\Longleftrightarrow$ (iii)$\implies$(iv) are trivial from the definition and Proposition~\ref{triangles}.

For (ii)$\implies$(i) assume that there is a map $\rho\colon N \otimes _B T  \to N \otimes _A T$ in $\C^{gr}(T)$ such that $\theta_N \rho = \id_{N \otimes _BT}$.
Note that $\theta_N$ and $\rho$ are tensor graded and $\pi_N=(\theta_N)^0$. Now we have $\pi_N (\rho)^0 = \id_{N}$, which means that $\pi_N$ splits and $N$ is na\"{\i}vely liftable to $A$.

For (iv)$\implies$(iii) note that $T^+$ is a DG ideal of $T$ and $T/T^+ \cong B$. Hence,  we obtain a short exact sequence
$0 \to N \otimes _B T^+ \to N \otimes _B T \to N \to 0$
in $\C^{gr}(T)$ that yields a commutative diagram
$$
\xymatrix{
\shift^{-1}N[1]  \ar[r] & N \otimes _B T^+ [1] \ar[rr]^{[\varpi_N]} && N \otimes _B T [1] \ar[r] &N[1] \\
&N \otimes _B T \ar[u]_{\omega_N^+}  \ar@{=}[rr]  \ar@{.>}[ul]^{\gamma} && N \otimes _B T \ar[u]^-{\omega_N} & }
$$
in $\K^{gr} (T)$ in which the first row is a triangle. If $\omega_N =0$ in $\K^{gr}(T)$, then $\omega_N^+$ factors through a morphism  $\gamma\colon N \otimes _B T \to \shift^{-1}N$.
On the other hand, we have $\gamma=0$ because $\Hom_{\K^{gr}(T)}(N\otimes_BT,\shift^{-1}N[1])\cong \Hom_{\K(B)}(N,\left(\shift^{-1}N[1]\right)^0)=0$. Therefore, $\omega_N^+=0$, as desired.
\end{proof}


\noindent \emph{Proof of Theorem~\ref{equivalent conditions}.}
The equivalence (i) $\Longleftrightarrow$ (ii) has been proven in Lemma~\ref{omega+}.

(ii) $\Longleftrightarrow$ (iii): The second assertion of Theorem~\ref{omega generates end} assures that $\omega_N = 0$ if and only if $\omega_N ^n=0$ for some integer $n\geq 1$.

The equivalences (ii) $\Longleftrightarrow$ (iv) $\Longleftrightarrow$ (vi) $\Longleftrightarrow$ (vii) follow from Theorem~\ref{omega generates end}.

The equivalence (ii) $\Longleftrightarrow$ (viii) follows from Theorem~\ref{omega generates end} as well because $$\Hom _{\K(B)}(N, N \otimes _B \shift J) = \Gamma _{N\otimes _B T}^1 = \omega_N \cdot \Gamma _{N\otimes _B T}^0.$$

(iv) $\Longleftrightarrow$ (v) is trivial.


For (ix) $\Longleftrightarrow$ (ii), let $\p$ be the ideal of the ring $\End _{\K(B)}(N)$ consisting of all morphisms that factor through a finite direct sum of copies of $B$. Then, we have
\begin{center}
$\omega_N =0 \Longleftrightarrow \p = \End _{\K^{gr}(T)} (N) = \Gamma _{N \otimes _BT} \Longleftrightarrow \id_N\in \p$
\end{center}
where the left equivalence follows from Theorem~\ref{kernel of omega}. Note that $\id_N\in \p$ means that $\id_N$ factors through $B^{\oplus n}$ for some $n\geq 0$, i.e., $N$ is a direct summand of  $B^{\oplus n}$.
\qed

\begin{para}
In Theorem~\ref{equivalent conditions}, if we further assume that $N$ is a semifree resolution of the $\HH_0(B)$-module $\HH_0(N)$, i.e., the natural augmentation map  $N \to \HH_0(N)$ is a quasiisomorphism,
then condition (v) is equivalent to the following:
\vspace{2mm}
\begin{enumerate}[\rm(v')]
\item
{\it The ring $\Gamma_{N\otimes _B T}$ is finitely generated as a (right) $\HH_0(B)$-module.}
\end{enumerate}
\vspace{2mm}
Note that since $N$ is a perfect DG $A$-module, $\HH_0(N)$ is finitely generated over $\HH_0(A)$ and we also have the ring homomorphism $\HH_0(A) \to \HH_0(B)$. Therefore, in this case, $\End _{\K(B)} (N) \cong \End _{\HH_0(B)}(\HH_0(N))$, where $\HH_0(N)$ is finitely generated over $\HH_0(B)$.
\end{para}

As an application of Theorem~\ref{equivalent conditions}, we can show Corollary~\ref{cor20230807a} below which provides an affirmative answer to~\cite[Question 4.10]{NOY2} under the $\AR$ condition. In order to explain this, we need the following preparations. 

\begin{para}
Before our next discussion in~\ref{para20230807s}, we need to make a special arrangement: for an integer $\ell$, a map $\omega_N^{\ell}\colon N\to N\otimes_B(\shift J)^{\otimes_B\ell}$ was defined in~\cite[Remark 4.9]{NOY2}. This map is different from the $\ell$-th power of $\omega_N$ that we define in~\ref{para20230115e} of the present paper as an element in $\Gamma_{N\otimes_BT}$. Hence, in order to avoid confusion made by using the same notation for two different objects, in this paper we use the notation $\chi^{\ell}_N$ instead of the notation $\omega_N^{\ell}$ defined in~\cite[Remark 4.9]{NOY2}.
\end{para}

\begin{para}\label{para20230807s}
Let $(\mathbb{B},\mathbb{D})$ be the semifree resolution of the DG $B^e$-module $B$ constructed in~\cite{NOY2}. Following~\cite[\S 4]{NOY2}, for each integer $\ell\geq 0$ we have that $(\mathbb{B}^{\leq \ell}, \mathbb{D}|_{\mathbb{B}^{\leq \ell}} )$ is a DG $B^e$-submodule of $(\mathbb{B}, \mathbb{D})$, where
$$
\mathbb{B} ^{\leq \ell} := B \otimes _A \left(\bigoplus _{n=0}^{\ell}  (\shift J) ^{\otimes _Bn}\right).
$$
Note that  $\mathbb{B} ^{\leq 0} = (B^e,\,d^{B^e})$. Moreover, let $\omega ^\ell\colon \mathbb{B} \to \mathbb{B}/\mathbb{B}^{\leq \ell}$ be the natural DG $B^e$-module homomorphism for each integer $\ell \geq 0$.
For such integers, it follows from~\eqref{eq20230822a} and~\cite[Remark 4.9]{NOY2} that $\w_N^{\ell}=\chi_N^{\ell}\otimes_B\id_T$. Ignoring isomorphisms in $\K(B)$, we see that $\omega_N^{\ell}=[\chi_N^{\ell}\otimes_B\id_T]=[\id_N\otimes_B\omega^{\ell}\otimes_B\id_T]$.

Let $N$ be a semifree DG $B$-module. By~\cite[Theorem 4.8]{NOY2}, 
the following conditions are equivalent.
\begin{enumerate}[\rm(i)]
\item
$N$ is na\"{\i}vely liftable to $A$;
\item
The DG $B$-module homomorphism  
$\id_N \otimes _B \omega^0$
is null-homotopic;
\item
The DG $B$-module homomorphisms  
$\id_N \otimes _B \omega^\ell$
are null-homotopic for \emph{all} integers $\ell \geq 0$. 
\end{enumerate} 
As we mention in~\cite[Question 4.10]{NOY2}, it is natural to ask whether these conditions are equivalent to the following:
\begin{enumerate}[\rm(iv)]
\item
The DG $B$-module homomorphism $\id_N \otimes _B \omega^\ell$ is null-homotopic for \emph{some} integer $\ell\geq 1$.
\end{enumerate}
As we stated above, the following result provides an affirmative answer to this question under the $\AR$ condition and follows immediately from~\ref{para20230807s} along with the fact that conditions (ii) and (iii) in Theorem~\ref{equivalent conditions} are equivalent.
\end{para}

\begin{cor}\label{cor20230807a}
Assume that $\AR$ holds for a DG $B$-module $N$. Under the setting of~\ref{para20230807s}, conditions (i) through (iv) are equivalent.
\end{cor}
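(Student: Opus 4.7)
The plan is to reduce this corollary to Theorem~\ref{equivalent conditions} via the identification recorded in~\ref{para20230807s}. By \cite[Theorem 4.8]{NOY2}, the equivalences (i) $\iff$ (ii) $\iff$ (iii) already hold with no vanishing hypothesis, and the implication (iii) $\implies$ (iv) is immediate (take any $\ell \geq 1$). Hence the only new content under the $\AR$ assumption is the implication (iv) $\implies$ (i).

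To prove this direction, I would start from an integer $\ell \geq 1$ for which $\id_N \otimes_B \omega^\ell$ is null-homotopic in $\C(B)$ and tensor a chosen null-homotopy with $\id_T$; this produces a null-homotopy of $\id_N \otimes_B \omega^\ell \otimes_B \id_T$ in $\C^{gr}(T)$. By the identity $\omega_N^\ell = [\id_N \otimes_B \omega^\ell \otimes_B \id_T]$ in $\Gamma_{N \otimes_B T}$, recalled in the opening lines of~\ref{para20230807s}, this says precisely that $\omega_N^\ell = 0$. The equivalence (ii) $\iff$ (iii) in Theorem~\ref{equivalent conditions}, which is exactly where $\AR$ is required, then forces $\omega_N = 0$ in $\Gamma_{N \otimes_B T}$, and the equivalence (i) $\iff$ (ii) of the same theorem yields na\"{\i}ve liftability of $N$ to $A$.

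There is no real obstacle here beyond the careful translation of notation: the $\ell$-th power $\omega_N^\ell$ as an element of the graded endomorphism ring $\Gamma_{N \otimes_B T}$ must be correctly matched with the auxiliary map formerly denoted $\omega_N^\ell$ (renamed $\chi_N^\ell$ in the present paper) of \cite[Remark 4.9]{NOY2}. Once this bookkeeping is made explicit, as in~\ref{para20230807s}, the corollary falls out formally; all of the genuine work has been absorbed into the proofs of Theorems~\ref{main}, \ref{omega generates end}, \ref{kernel of omega}, and~\ref{equivalent conditions}.
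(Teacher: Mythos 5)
Your proposal is correct and follows the same route the paper takes: both reduce the corollary to the identification $\omega_N^{\ell}=[\id_N\otimes_B\omega^{\ell}\otimes_B\id_T]$ recorded in~\ref{para20230807s}, invoke \cite[Theorem 4.8]{NOY2} for the unconditional equivalences (i)--(iii), and then close the loop from (iv) by observing that null-homotopy of $\id_N\otimes_B\omega^{\ell}$ forces $\omega_N^{\ell}=0$ in $\Gamma_{N\otimes_BT}$, so that the $\AR$-dependent equivalence (ii)~$\Longleftrightarrow$~(iii) of Theorem~\ref{equivalent conditions} yields $\omega_N=0$. Your write-up simply makes explicit the tensoring-with-$\id_T$ step that the paper's one-line proof leaves implicit.
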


\begin{para}
It is worth highlighting that if $\AR$ and $\Ar$ hold for a DG $B$-module $N$, then by Theorem~\ref{thm20230425a} we automatically have the vanishing of $\Gamma_{N\otimes_BT}$-modules
\begin{gather*}
{}^*\!\Hom _{\K^{gr}(T) } (N \otimes _B T, N \otimes _B \shift^1 T) \\
{}^*\!\Hom _{\K^{gr}(T) } (N \otimes _B T, N \otimes _B \shift^2 T) \\
\vdots
\end{gather*}
However, na\"{\i}ve liftability of $N$ is independent of the above modules and by Theorem~\ref{equivalent conditions}, it is detected only by the vanishing of $\omega_N$ as an element of the $\Gamma_{N\otimes_BT}$-module
$$
\Gamma_{N\otimes_BT}={}^*\!\Hom _{\K^{gr}(T) } (N \otimes _B T, N \otimes _B \shift^0 T).
$$
\end{para}

We conclude this section with the following conjecture that we call \emph{Na\"{\i}ve Lifting Conjecture}; compare with the same named conjecture in~\cite{NOY1}.

\begin{conj}\label{conj20230520a}
If $\AR$ and $\Ar$ hold for a DG $B$-module $N$, then $N$ is na\"{\i}vely liftable to $A$, i.e., one of the equivalent conditions in Theorem~\ref{equivalent conditions} holds.
\end{conj}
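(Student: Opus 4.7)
The plan is to establish condition (ix) of Theorem~\ref{equivalent conditions}, namely that $N$ is a direct summand of a finite direct sum of copies of $B$ in $\D(B)$. By the equivalence of the nine conditions in that theorem, this would yield the na\"{\i}ve liftability of $N$ to $A$. Equivalently, by Theorem~\ref{kernel of omega}, one must show that $\id_N$ lies in the ideal $\p$ of $\End_{\K(B)}(N)$ consisting of morphisms that factor through $B^{\oplus n}$ for some $n$, or, dually, that the single element $\omega_N \in \Gamma^1_{N\otimes_B T} = \Hom_{\K(B)}(N, N\otimes_B\shift J)$ vanishes.

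To construct such a factorization, I would use the perfectness of $N$ over $A$ from $\AR$\eqref{AR-1-2} to build a DG $B$-module homomorphism $\beta\colon B^{\oplus n}\to N$ that is surjective on $\HH_0$ and, more generally, generates $N$ in $\D(B)$. Completing $\beta$ to a triangle $K\to B^{\oplus n}\xra{\beta} N\xra{\alpha} K[1]$ in $\K(B)$, the problem reduces to showing $\alpha=0$. Applying $\Hom_{\K(B)}(N,-)$ and combining $\AR$\eqref{AR-1-3} with $\Ar$, one obtains $\Hom_{\K(B)}(N, \shift^i(B\oplus N))=0$ for every $i\geq 1$, and a devissage of $K$ into shifts of $B$ would then push the obstruction $\alpha$ into a controllable region. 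A parallel route is to approximate $B$ by the finite subextensions $A\subseteq A\langle X_1,\dots,X_k\rangle\subseteq B$ in the setting of Theorem~\ref{thm20230127f}, obtain splittings $\sigma_k$ of $\pi_{N|_{B^{(k)}}}$ at each finite stage, and attempt to glue them in the colimit to a splitting of $\pi_N$.

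The main obstacle lies precisely in these last steps. The fiber $K$ is not itself a finite direct sum of copies of $B$, and controlling $\Hom_{\K(B)}(N, K[1])$ requires filtering $K$ compatibly with the $\Ext$ vanishing; equivalently, in the language of $\omega_N$, although Theorem~\ref{thm20230425a} already yields the vanishing of ${}^*\!\Hom_{\K^{gr}(T)}(N\otimes_B T, N\otimes_B\shift^m T)$ for every $m\geq 1$, the class $\omega_N$ sits in the boundary strip $m=0$, which remains uncontrolled by the available hypotheses. Bridging from the vanishing in positive shifts to this boundary strip appears to require an additional convergence or Mittag-Leffler type compatibility argument across the finite subextensions, since na\"{\i}ve liftings are not canonical and different choices of $\sigma_k$ need not glue. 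This is essentially why the statement is formulated as a conjecture rather than a theorem, and any successful proof will likely have to introduce genuinely new input beyond the structural results developed here.
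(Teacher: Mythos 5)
You were handed a statement that the paper itself labels Conjecture~\ref{conj20230520a} --- the Na\"{\i}ve Lifting Conjecture --- and the paper offers no proof of it; it is stated precisely because it remains open. There is therefore no ``paper's own proof'' to compare your attempt against. What matters here is that you recognized this, and your diagnosis of why the available machinery falls short is essentially correct and even anticipates the paper's own commentary. You correctly locate the obstruction in the boundary strip: Theorem~\ref{thm20230425a} gives the vanishing of ${}^*\!\Hom_{\K^{gr}(T)}(N\otimes_B T, N\otimes_B\shift^m T)$ for every $m\geq 1$, but $\omega_N$ lives in $\Gamma_{N\otimes_B T}={}^*\!\Hom_{\K^{gr}(T)}(N\otimes_B T, N\otimes_B\shift^0 T)$, which is the untouched case $m=0$. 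The paper makes exactly this observation in the remark following Corollary~\ref{cor20230807a}.

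Your other attempted route --- splitting $\pi_N$ over the finite subextensions $A\subseteq A\langle X_1,\dots,X_k\rangle\subseteq B$ using Theorem~\ref{thm20230127f} and passing to a colimit --- correctly identifies the second genuine difficulty: the splittings at finite stages are neither canonical nor compatible, so there is no direct limit argument. This is exactly why the paper proves na\"{\i}ve liftability only along finite free extensions (Theorem~\ref{thm20230127f}) and leaves the infinite case, which is the one relevant to the Auslander--Reiten Conjecture via Theorem~\ref{thm20230428a}, as a conjecture. In short, your ``proof attempt'' fails to close the gap because the gap cannot be closed with what is in the paper, and you said so accurately.
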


\appendix
\section
{{\small \!\!\!The Auslander-Reiten Conjecture and \,\\ conditions $\AR$ and $\Ar$\!}}
\label{sec20230521a}

In this appendix, we explain that conditions $\AR$ and $\Ar$ are derived from translating the assumptions in the \underline{A}uslander-\underline{R}eiten Conjecture (Conjecture~\ref{conj20230122a}) into the DG setting; see~\ref{para20230521a}. This is why we call these conditions ``\texttt{AR}''.

\begin{prop}\label{para20230520a}
Let $N$ be a semifree DG $B$-module with $\HH_i(N)=0$ for all $i\neq 0$. Then, $\Hom_{\K(B)}(N,\shift^{\ell}N)=0$ for all $\ell<0$. 
\end{prop}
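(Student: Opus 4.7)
The plan is to reduce, via a soft truncation, to showing that chain maps from a non-negatively graded semifree DG $B$-module into a DG $B$-module concentrated in a strictly negative degree must vanish.

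Since $N$ is semifree (hence K-projective), the canonical map $\Hom_{\K(B)}(N,M)\to\Hom_{\D(B)}(N,M)$ is a bijection for every DG $B$-module $M$, so it suffices to show $\Hom_{\D(B)}(N,\shift^{\ell}N)=0$ for $\ell<0$. Let $H:=\HH_0(N)$, regarded as a DG $B$-module concentrated in degree $0$ via the augmentation $B\to \HH_0(B)$ available because $B$ is non-negatively graded. First I would prove $N\simeq H$ in $\D(B)$: because $B$ is non-negatively graded, the soft truncation $\tau_{\geq 0}N$ is a DG $B$-submodule of $N$, the inclusion $\tau_{\geq 0}N\hookrightarrow N$ is a quasi-isomorphism since $\HH_i(N)=0$ for $i<0$, and the natural surjection $\tau_{\geq 0}N\twoheadrightarrow H$ is a quasi-isomorphism since $\HH_i(N)=0$ for $i>0$. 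Applying the auto-equivalence $\shift^{\ell}$ of $\D(B)$ yields $\shift^{\ell}N\simeq \shift^{\ell}H$ in $\D(B)$, and therefore
\[
\Hom_{\K(B)}(N,\shift^{\ell}N)\cong \Hom_{\D(B)}(H,\shift^{\ell}H).
\]

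To compute the right-hand side I would take a semifree resolution $P\xrightarrow{\simeq} H$ chosen so that $P_i=0$ for all $i<0$. Such a $P$ exists by the standard inductive construction, starting from a free cover of $H$ in degree $0$ and only adjoining basis elements in strictly positive degrees to kill the resulting higher homology; this is possible because both $H$ and $B$ are concentrated in non-negative degrees. Since $P$ is semifree, $\Hom_{\D(B)}(H,\shift^{\ell}H)\cong \Hom_{\K(B)}(P,\shift^{\ell}H)$. Now any chain map $\phi\colon P\to\shift^{\ell}H$ of graded degree $0$ sends $P_i$ into $(\shift^{\ell}H)_i=H_{i-\ell}$; for $\ell<0$ and every $i\geq 0$ one has $i-\ell>0$, so $H_{i-\ell}=0$, forcing $\phi=0$. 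This gives the desired vanishing.

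The main obstacle I anticipate is the first reduction $N\simeq H$: since $N$ is not assumed bounded below as a graded module, one has to pass through the soft truncation, and this is the only place where the hypothesis that $B$ is non-negatively graded is used in an essential way (it ensures that $\tau_{\geq 0}N$ is closed under the right $B$-action and that the augmentation $B\to \HH_0(B)$ exists). Once this reduction is established, the remainder of the argument is a routine degree count.
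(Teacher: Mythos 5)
Your proof is correct and follows a genuinely different route from the paper's. The paper also begins by replacing $N$ (implicitly, using that $\HH_i(N)=0$ for $i<0$) by a quasi-isomorphic semifree DG module whose filtration quotients $F_n/F_{n-1}$ are direct sums of copies of $\shift^n B$, but it keeps $\shift^\ell N$ as the target, proves $\Hom_{\K(B)}(F_n,\shift^\ell N)=0$ by an induction on $n$ driven by the vanishing $\HH_{n-\ell}(N)=0$, and then passes from the filtration pieces to $N$ itself via the Milnor telescope triangle $\bigoplus F_n\to\bigoplus F_n\to N\to\shift\bigl(\bigoplus F_n\bigr)$. You instead replace the \emph{target} as well: after passing to $\D(B)$ by K-projectivity you collapse $N$ to $H=\HH_0(N)$ via the two-sided soft truncation and then replace the source $H$ by a non-negatively graded semifree resolution $P$. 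Since $\shift^\ell H$ is concentrated in the single degree $\ell<0$ while $P$ lives in degrees $\geq 0$, any degree-zero chain map $P\to\shift^\ell H$ vanishes componentwise, and the inductive/telescope machinery disappears entirely. Your approach is thus somewhat more elementary, at the small cost of checking that $\tau_{\geq 0}N$ is a DG $B$-submodule of $N$ and that the augmentation $\tau_{\geq 0}N\to H$ is a $B$-linear quasi-isomorphism (both hinging on $B$ being non-negatively graded, as you note); the paper's arrangement has the advantage that the same induction-plus-telescope template proves Proposition~\ref{para20230520b} essentially verbatim.
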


\begin{proof}
Using the assumption that $\HH_i(N)=0$ for all $i<0$, we may assume that $N$ has a semifree filtration
$$
0=F_{-1}\overset{\iota_{-1}}\hookrightarrow F_0\overset{\iota_0}\hookrightarrow \cdots\hookrightarrow F_{i-1}\overset{\iota_{i-1}}\hookrightarrow F_i\overset{\iota_{i}}\hookrightarrow\cdots \hookrightarrow N
$$
such that for all $i\geq 0$, $$F_{i-1}\xra{\iota_{i-1}} F_i\to \bigoplus(\shift^{i} B)\to \shift F_{i-1}$$ is a triangle in $\K(B)$. It follows from the isomorphism $\Hom_{\K(B)}(\shift^nB,N)\cong \HH_n(N)$ that $\Hom_{\K(B)}(F_n,\shift^{\ell}N)\cong \Hom_{\K(B)}(F_0,\shift^{\ell}N)\cong \prod\HH_{-\ell}(N)=0$ for all $n\geq -1$ and $\ell\leq -1$. Note that there is a triangle
$$
\bigoplus_{n=-1}^{\infty} F_n\xra{\psi}\bigoplus_{n=-1}^{\infty} F_n\to N\to \shift\left(\bigoplus_{n=-1}^{\infty} F_n\right)
$$
where $\psi$ is defined by $\psi|_{F_n}=\id_{F_n}-\iota_n$. This triangle induces the exact sequence
$$
\Hom_{\K(B)}(\shift(\bigoplus_{n=-1}^{\infty} F_n),\shift^{\ell}N)\to\Hom_{\K(B)}(N,\shift^{\ell}N)\to
\Hom_{\K(B)}(\bigoplus_{n=-1}^{\infty} F_n,\shift^{\ell}N).
$$
Thus, since $\Hom_{\K(B)}(\shift(\bigoplus_{n=-1}^{\infty} F_n),\shift^{\ell}N)\cong \prod_{n=-1}^{\infty}\Hom_{\K(B)}(F_n,\shift^{\ell-1}N)=0$ for all $\ell\leq 0$, we conclude that $\Hom_{\K(B)}(N,\shift^{\ell}N)=0$ for all $\ell\leq -1$.
\end{proof}

\begin{para}
In contrast to Proposition~\ref{para20230520a}, one can construct an example of a semifree DG $B$-module $N$ such that $\HH_i(N)\neq 0$ for some $i\neq 0$ and at the same time $\Hom_{\K(B)}(N,\shift^{-1}N)\neq 0$. To see this, let $a\neq 0$ be an element in the ring $R$ with $a^2=0$, and let $N$ be a semifree DG $R$-module with a semifree basis $\{e_0,e_1\}$, where $|e_0|=0$, $|e_1|=1$, and $\partial^N(e_1)=e_0a$. In other words, $N$ is the Koszul complex $K^R(a)$. Then, $\HH_1(N)\neq 0$ because $a$ is a zero divisor in $R$. Now, considering the DG $R$-module homomorphism $f\colon N\to \shift^{-1} N$ defined by $f(e_0)=e_1a$ and $f(e_1)=0$, we see that $f$ is a non-zero element in $\Hom_{\K(B)}(N,\shift^{-1}N)$. 
\end{para}

The following is proved by the same technique as in the proof of Proposition~\ref{para20230520a}.

\begin{prop}\label{para20230520b}
If $X$ is a DG $B$-module with $\HH_i(X)=0$ for all $i\geq 1$ and $F$ is a semifree DG $B$-module with $\HH_i(F)=0$ for all $i<0$, then $\Hom_{\K(B)}(F,\shift^iX)=0$ for all $i<0$.
\end{prop}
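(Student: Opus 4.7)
My plan is to mimic the proof of Proposition~\ref{para20230520a} essentially verbatim, with $\shift^iX$ playing the role that $\shift^{\ell}N$ played there. The assumption $\HH_i(F)=0$ for all $i<0$ lets me assume (after replacing $F$ by a quasi-isomorphic semifree DG $B$-module) that $F$ admits a semifree filtration
$$
0=F_{-1}\overset{\iota_{-1}}\hookrightarrow F_0\overset{\iota_0}\hookrightarrow F_1\overset{\iota_1}\hookrightarrow\cdots\hookrightarrow F
$$
such that for all $n\geq 0$ there is a triangle
$$
F_{n-1}\xra{\iota_{n-1}}F_n\to \bigoplus(\shift^n B)\to \shift F_{n-1}
$$
in $\K(B)$, and moreover $F=\colim F_n$ sits in the triangle
$$
\bigoplus_{n\geq -1}F_n \xra{\psi} \bigoplus_{n\geq -1}F_n \to F \to \shift\!\left(\bigoplus_{n\geq -1}F_n\right),
$$
with $\psi|_{F_n}=\id_{F_n}-\iota_n$, just as in~\ref{para20230520a}.

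The routine input is the isomorphism $\Hom_{\K(B)}(\shift^n B,\shift^i X)\cong \HH_{n-i}(X)$. For any fixed $i\leq -1$ and any $n\geq 0$ we have $n-i\geq 1$, so this is zero by hypothesis on $X$. I would then induct on $n$ to prove $\Hom_{\K(B)}(F_n,\shift^i X)=0$ for every $n\geq -1$ and every $i\leq -1$: the base case $F_{-1}=0$ is trivial, and the triangle above together with the exact sequence
$$
\Hom_{\K(B)}(\shift F_{n-1},\shift^i X)\to \Hom_{\K(B)}(\bigoplus\shift^n B,\shift^i X)\to \Hom_{\K(B)}(F_n,\shift^i X)\to \Hom_{\K(B)}(F_{n-1},\shift^i X)
$$
sandwiches $\Hom_{\K(B)}(F_n,\shift^i X)$ between the group on the right (zero by induction on $n$ with the same $i$) and the group on the left (zero by induction on $n$ with $i-1$, noting $i-1\leq -2\leq -1$). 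Note that the shifted term $\Hom_{\K(B)}(\bigoplus\shift^n B,\shift^i X)$ is a product of $\HH_{n-i}(X)$'s and hence zero as observed.

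Finally, I would pass from the pieces $F_n$ to $F$ itself by applying $\Hom_{\K(B)}(-,\shift^i X)$ to the colimit triangle above, giving the exact sequence
$$
\Hom_{\K(B)}\!\left(\shift\!\bigoplus_{n\geq -1}F_n,\shift^i X\right)\to \Hom_{\K(B)}(F,\shift^i X)\to \Hom_{\K(B)}\!\left(\bigoplus_{n\geq -1}F_n,\shift^i X\right).
$$
The right-hand term is $\prod_n \Hom_{\K(B)}(F_n,\shift^i X)=0$ by the induction, and the left-hand term equals $\prod_n\Hom_{\K(B)}(F_n,\shift^{i-1}X)=0$ for the same reason (since $i-1\leq -2$). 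Thus $\Hom_{\K(B)}(F,\shift^i X)=0$ for every $i<0$, completing the proof.

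The only place requiring any care — and the step I would flag as the main (mild) obstacle — is justifying the specific form of the semifree filtration: one must verify that the condition $\HH_i(F)=0$ for $i<0$ is exactly what permits the subquotients to be taken as $\bigoplus\shift^n B$ with $n\geq 0$. This is the same reduction that is tacitly used at the start of the proof of Proposition~\ref{para20230520a} for $N$, so it is legitimate to invoke it here by the phrase ``the same technique.'' Everything else is a diagram chase with long exact sequences.
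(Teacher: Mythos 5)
Your proof is correct and is precisely what the paper intends: the paper merely asserts that this proposition ``is proved by the same technique as in the proof of Proposition~\ref{para20230520a},'' and you have carried out that technique faithfully, including the standard reduction to a semifree filtration with subquotients $\bigoplus\shift^n B$ for $n\geq 0$ (justified because quasi-isomorphic semifree DG modules are homotopy equivalent, so $\Hom_{\K(B)}(F,-)$ is unaffected by the replacement). One small cosmetic remark: the term immediately to the left of $\Hom_{\K(B)}(F_n,\shift^i X)$ in your four-term exact sequence is $\Hom_{\K(B)}(\bigoplus\shift^n B,\shift^i X)$, not $\Hom_{\K(B)}(\shift F_{n-1},\shift^i X)$, but since you verify the vanishing of both flanking groups the conclusion is unaffected.
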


\begin{cor}\label{para20230520b'}
If $\HH_i(B)=0$ for all $i\geq 1$ and $N$ is a semifree DG $B$-module with $\HH_i(N)=0$ for all $i<0$, then $\Hom_{\K(B)}(N,\shift^iB)=0$ for all $i<0$.
\end{cor}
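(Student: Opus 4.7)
The plan is to observe that Corollary~\ref{para20230520b'} is an immediate specialization of Proposition~\ref{para20230520b}, obtained by taking $X = B$ and $F = N$. The two hypotheses transfer directly: the requirement $\HH_i(X) = 0$ for all $i \geq 1$ becomes $\HH_i(B) = 0$ for all $i \geq 1$, which is given; and the requirement that $F$ be a semifree DG $B$-module with $\HH_i(F) = 0$ for all $i < 0$ becomes precisely the assumption on $N$. The conclusion $\Hom_{\K(B)}(F, \shift^i X) = 0$ for all $i < 0$ specializes to the statement to prove.

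Since the corollary is a direct substitution into Proposition~\ref{para20230520b}, the only substantive content lives in the proof of that proposition, which by the paper's remark proceeds by the same technique as Proposition~\ref{para20230520a}. The plan, if one had to run that argument afresh for $X = B$, would be to fix a semifree filtration $0 = F_{-1} \subseteq F_0 \subseteq F_1 \subseteq \cdots \subseteq N$ whose subquotients satisfy $F_n/F_{n-1} \cong \bigoplus \shift^n B$ for $n \geq 0$ (this uses $\HH_i(N) = 0$ for $i < 0$). Then invoke the adjunction $\Hom_{\K(B)}(\shift^n B, \shift^i B) \cong \HH_{n-i}(B)$, which vanishes whenever $n - i \geq 1$. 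For $n \geq 0$ and $i \leq -1$, one has $n - i \geq 1$, so the stalks of $\Hom_{\K(B)}(-, \shift^i B)$ on each subquotient vanish.

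To pass from the filtration pieces to $N$ itself, one imitates the telescope triangle used in Proposition~\ref{para20230520a}: there is a triangle $\bigoplus_n F_n \xra{\psi} \bigoplus_n F_n \to N \to \shift(\bigoplus_n F_n)$ with $\psi|_{F_n} = \id - \iota_n$, and applying $\Hom_{\K(B)}(-, \shift^i B)$ together with the vanishing on each $F_n$ (obtained by induction from the subquotient vanishing) forces $\Hom_{\K(B)}(N, \shift^i B) = 0$ for $i < 0$.

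There is no real obstacle: once Proposition~\ref{para20230520b} is in hand, the corollary is a one-line substitution, and even a direct proof would follow the Proposition~\ref{para20230520a} template verbatim, with $\shift^i N$ replaced by $\shift^i B$ in the target and the hypothesis $\HH_j(B) = 0$ for $j \geq 1$ doing the work that $\HH_j(N) = 0$ for $j \neq 0$ did before.
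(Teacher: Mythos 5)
Your proposal is correct and matches the paper exactly: the corollary is the specialization $X=B$, $F=N$ of Proposition~\ref{para20230520b}, and that proposition in turn is proved by the same telescope-filtration argument as Proposition~\ref{para20230520a}, precisely as you outline. Nothing to add.
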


\begin{para}\label{para20230521a}
We work in the setting of Conjecture~\ref{conj20230122a} and discussion~\ref{para20230424s}. Note that the $R$-module $M$ is regarded as a DG $Q'$-module via the natural augmentation $Q' \to R$. Assume that $N\xra{\simeq} M$ is a semifree resolution of the DG $Q'$-module $M$. It follows from Proposition~\ref{para20230520a} and Corollary~\ref{para20230520b'} (with $B=Q'$) that $\Ext^i_{Q'}(N,N\oplus Q')=0$ for all $i\neq 0$ (e.g., $\AR$ and $\Ar$ hold for the DG $Q'$-module $N$) when we translate the Ext vanishing assumptions for $M$ in Conjecture~\ref{conj20230122a}
into the DG setting. With this explanation, if Conjecture~\ref{conj20230520a} holds (i.e., if one of the equivalent conditions in Theorem~\ref{equivalent conditions} holds for $N$ under the Ext vanishing assumptions of Conjecture~\ref{conj20230122a} for $M$), then Conjecture~\ref{conj20230122a} holds.
\end{para}




\providecommand{\bysame}{\leavevmode\hbox to3em{\hrulefill}\thinspace}
\providecommand{\MR}{\relax\ifhmode\unskip\space\fi MR }
\providecommand{\MRhref}[2]{%
  \href{http://www.ams.org/mathscinet-getitem?mr=#1}{#2}
}
\providecommand{\href}[2]{#2}

\end{document}